\theoremstyle{plain}
\newtheorem{theorem}{Theorem}[section]
\newtheorem{proposition}[theorem]{Proposition}
\newtheorem{lemma}[theorem]{Lemma}
\newtheorem{corollary}[theorem]{Corollary}
\newtheorem{definition-theorem}[theorem]{Definition-Theorem}
\newtheorem{definition-proposition}[theorem]{Definition-Proposition}
\newtheorem{maintheorem}{Theorem}
\theoremstyle{definition}
\newtheorem{example}[theorem]{Example}
\theoremstyle{remark}
\newtheorem{remark}[theorem]{Remark}
\def\l@subsection{\@tocline{2}{0pt}{2pc}{5pc}{}}
\title[Rank of CMAVs over anticyclotomic towers]{On the Mordell-Weil rank of certain CM abelian varieties over anticyclotomic towers}
\author{Haidong Li \orcidlink{0009-0008-6381-2600}}
\address[Haidong Li]{Jiangsu Police Institute. No.48 Shifo Sangong, Pukou District, Nanjing, 210031, China.}
\email{lihaidong@jspi.cn}
\author{Ruichen Xu \orcidlink{0009-0003-4555-2116}}
\address[Ruichen Xu]{Academy of Mathematics and Systems Science, Chinese Academy of Sciences. No. 55 Zhongguancun East Road, Haidian District, Beijing, 100190, China.}
\email{xuruichen21@mails.ucas.ac.cn}
\date{July 29, 2026}
\begin{document}
\begin{abstract}
Let $K/\QQ$ be an imaginary quadratic extension, and let $p$ be an odd prime. In this paper, we investigate the growth of Mordell-Weil ranks of certain CM abelian varieties associated with Hecke characters over $K$ of infinite type $(1, 0)$ along the anticyclotomic $\ZZ_p$-tower of $K$. Our results cover \emph{all} decomposition types of $p$ in $K$. The analytic aspect of our proof is based on our computations of the local and global root numbers of Hecke characters, together with a recent generalization by Haijun Jia of David Rohrlich's result concerning the relation between the vanishing orders of Hecke $L$-functions and their root numbers. The arithmetic conclusions then follow from the Gross-Zagier formula and the Kolyvagin machinery.
\end{abstract}  

\maketitle

\tableofcontents

\section{Introduction}
Let $A$ be an abelian variety over $\QQ$, and let $L/\QQ$ be a number field. The famous (generalized) Mordell-Weil theorem states that $A(L)$ is a finitely generated abelian group. The $\ZZ$-rank of $A(L)$, known as the \emph{Mordell-Weil rank} of $A$ over $L$, is a key invariant of the abelian variety $A$. In his 1983 ICM report \cite{MR804682}, Barry Mazur initiated the study of the growth of the Mordell-Weil rank as $L$ varies along certain towers of number fields over $\QQ$, focusing on the case where $A$ is an elliptic curve.

Let $K/\QQ$ be an imaginary quadratic extension, and let $\varphi: \AA_{K}^{\times} \rightarrow \CC^{\times}$ be a unitary \emph{anticyclotomic} Hecke character of infinite type $(1,0)$. The term "anticyclotomic" means $\varphi \circ \sfc = \bar{\varphi}$, where $\sfc$ denotes the complex conjugation of $K$. By the work of Hecke and Shimura, one can attach a modular form $\theta_{\varphi}$ to $\varphi$. Furthermore, by the construction of Eichler and Shimura, one can canonically attach an abelian variety $A_{\varphi}$ over $\QQ$ to $\theta_{\varphi}$. This abelian variety is of $\GL_2$-type and has complex multiplication by the imaginary quadratic field $K$, which means $K$ is contained in the CM field of $A_{\varphi}$.

Let $p$ be an odd prime number, and let $K_\infty^{\ac}/K$ denote the anticyclotomic $\ZZ_p$-extension of $K$, with its $n$-th layer denoted by $K_n^{\ac}$ such that $\mathrm{Gal}(K_n^{\ac}/K) = \ZZ/p^n\ZZ$. We set $K_0^{\ac} = K$. In this paper, we study the growth of the Mordell-Weil rank of $A_{\varphi}$ along this anticyclotomic tower. The growth behavior depends significantly on the reduction type of $p$ in $K$ and the root number $W(\varphi)$ of the Hecke character $\varphi$.

We prove in Section \ref{sec:mainresult} that
\[
\rank_{\ZZ} A_{\varphi}(K_{n}^{\ac}) \equiv \rank_{\ZZ} A_{\varphi}(K_{n-1}^{\ac}) \mod{\phi(p^n) = p^{n-1}(p-1)}.
\]
Thus, we can write
\[
\rank_{\ZZ} A_{\varphi}(K_{n}^{\ac}) - \rank_{\ZZ} A_{\varphi}(K_{n-1}^{\ac}) = \epsilon_n \phi(p^n),
\]
where $\epsilon_n$ are nonnegative integers. The following main theorem describes $\epsilon_n$.

\begin{maintheorem} \label{thm:mainA}
Let $A_{\varphi}$ be the abelian variety as described above, and let $d = \dim A_{\varphi}$. Let $p > 2$ be a prime number. Denote $\tilW(\varphi) := (1 - W(\varphi))/2$. Then for $n \gg 0$:
\begin{enumerate}[\rm (i)]
    \item If $p$ splits in $K$, then $\epsilon_n = 0$ if $W(\varphi) = 1$, and $\epsilon_n = 2d$ if $W(\varphi) = -1$.
    \item If $p$ remains inert in $K$, let $j$ be the nonnegative integer so that $H_K \cap K_{\infty}^{\ac} = K_{j}^{\ac}$, where $H_K$ is the Hilbert class field of $K$, and let $f(\varphi_p)$ be the exponent of the conductor of the $p$-component of $\varphi$. Then
    \begin{enumerate}[\rm (a)]
        \item if $j+f(\varphi_p)$ is even, then $\epsilon_n = \begin{cases}
        2 d, &\quad\text{if } n \equiv \tilW(\varphi) \bmod 2, \\
        0, &\quad\text{if } n \not\equiv \tilW(\varphi) \bmod 2,
        \end{cases}$
        \item if $j+f(\varphi_p)$ is odd, then $\epsilon_n = \begin{cases}
        0, &\quad\text{if } n \equiv \tilW(\varphi) \bmod 2, \\
        2d, &\quad\text{if } n \not\equiv \tilW(\varphi) \bmod 2.
        \end{cases}$
    \end{enumerate}
    \item If $p$ is ramified in $K$, then $\epsilon_n = d$.
\end{enumerate}
\end{maintheorem}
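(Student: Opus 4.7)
The plan is to translate the rank growth into a sum over orders of vanishing of twisted Hecke $L$-functions, use the Gross--Zagier--Kolyvagin machinery to identify analytic and algebraic ranks in the relevant range, and then combine Jia's generalization of Rohrlich's non-vanishing theorem with an explicit calculation of global root numbers. The first step is the standard isotypic decomposition of $A_\varphi(K_n^{\ac}) \otimes_\ZZ \CC$ under $\mathrm{Gal}(K_n^{\ac}/K)$: the characters $\chi$ of exact order $p^n$ form Galois orbits of total size $\phi(p^n)$ and, together with the Galois conjugates of $\varphi$, give rise to the contributions one must count. Standard formalism then expresses $\epsilon_n \phi(p^n)$ as $2d \sum_\chi \mathrm{ord}_{s=1} L(s, \varphi\chi)$, where $\chi$ ranges over orbit representatives and the factor $2d$ packages the orbit size, the pair $(\varphi, \bar\varphi)$, and the dimension count.

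Next I would appeal to the Gross--Zagier formula together with Kolyvagin's Euler system, extended to the anticyclotomic setting by Bertolini--Darmon and S.~Zhang, to conclude that whenever $\mathrm{ord}_{s=1} L(s, \varphi\chi) \leq 1$ the analytic and algebraic ranks of the $\chi$-isotypic component coincide. Jia's extension of Rohrlich's theorem then guarantees that for all but finitely many anticyclotomic characters $\chi$, the order of vanishing equals exactly $\tilW(\varphi\chi) = (1 - W(\varphi\chi))/2 \in \{0, 1\}$. Consequently, for $n \gg 0$ the quantity $\epsilon_n$ is determined purely by how many characters $\chi$ of exact order $p^n$ satisfy $W(\varphi\chi) = -1$, weighted with the appropriate multiplicity.

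The third step is the root number computation $W(\varphi\chi) = \prod_v W_v(\varphi_v \chi_v)$. Away from $p$, the local factors stabilize since $\chi_v$ is eventually unramified at every non-$p$-adic place. The analysis at $p$ splits into three cases. When $p$ splits as $\mathfrak{p}\bar{\mathfrak{p}}$, the anticyclotomic identity $\chi \circ \sfc = \chi^{-1}$ forces the local factors at $\mathfrak{p}$ and $\bar{\mathfrak{p}}$ to cancel in their $\chi$-dependence, so $W(\varphi\chi) = W(\varphi)$ for all sufficiently ramified $\chi$, giving $\epsilon_n \in \{0, 2d\}$ according to $W(\varphi)$. When $p$ is ramified, a direct $\epsilon$-factor computation shows that exactly half of the new characters at layer $n$ give root number $+1$ and half give $-1$, producing $\epsilon_n = d$. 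When $p$ is inert, the single local root number at the prime above $p$ depends on the parity of the conductor exponent $f(\varphi_p \chi_p)$; this parity alternates with $n$ and is shifted by $j + f(\varphi_p)$, where $j$ encodes the overlap of the Hilbert class field with the anticyclotomic tower.

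The main obstacle is the inert case, where one must compute local root numbers of anticyclotomic twists of $\varphi_p$ at the inert place above $p$ and distill the resulting signs into the clean parity dichotomy stated in (ii). This reduces to tracking how $f(\varphi_p \chi_p)$ evolves as the ramification of $\chi$ increases, and to understanding precisely when $\chi_p$ itself is unramified, which is governed by $j$. Once the local calculations from the earlier sections are in hand, counting the $\phi(p^n)$ new characters at layer $n$ and combining with the analytic-equals-algebraic input from the second step yields the factors $2d$, $d$, or $0$ stated in the theorem.
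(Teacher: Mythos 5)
Your proposal follows the same route as the paper: decompose the rank growth across anticyclotomic twists of $\varphi$, invoke the Gross--Zagier--Kolyvagin theorem once Jia's extension of Rohrlich's theorem confines the vanishing orders to $\{0,1\}$, and determine the sign by a local root number computation concentrated at $p$, with the inert and ramified cases as the delicate ones.

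One remark on the ``standard formalism'' step. The identity $\epsilon_n\phi(p^n) = 2d\sum_\chi\mathrm{ord}_{s=1}L(\varphi\chi,s)$ is correct if $\chi$ runs over \emph{all} characters of $\Gal(K_n^{\ac}/K)$ of exact order $p^n$, but it is not a tautology: the Weil restriction $\Res_{K_n^{\ac}/K}A_\varphi$ decomposes up to isogeny into a product of abelian varieties $A_{\varphi\rho}$ whose Hecke fields $M_\rho$ can be strictly larger than $M = \End^0(A_\varphi)$, and the sizes $[M_\rho:\QQ]$ need not be constant across $\rho$. The paper's Proposition \ref{lemma:isogeny} is exactly the bookkeeping statement that the dimensions $[M_\rho:\QQ]$ of the isogeny factors add up to $d\,\phi(p^n)$; together with the Galois-invariance of vanishing orders, this is what lets the sum over orbit representatives weighted by $[M_\rho:\QQ]$ be rewritten as the unweighted sum over all $\rho$ of level $n$ multiplied by $d$. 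Your phrase ``$\chi$ ranges over orbit representatives and the factor $2d$ packages the orbit size'' is not literally consistent (a single orbit representative multiplied by $2d$ cannot produce $\epsilon_n\phi(p^n)$), and more importantly the repackaging step is essential precisely in the ramified case, where the individual root numbers $W(\varphi^\sigma\rho)$ are not constant across $\rho$ and one must argue that the signed sum $\sum_{\rho}W(\varphi^\sigma\rho)$ vanishes for each embedding $\sigma$ separately. Once that bookkeeping is done correctly, your strategy coincides with the paper's.
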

We shall provide examples in which $j > 0$ indeed occurs, equivalently, $H_{K} \cap K_{\infty}^{\ac} \supsetneq K$, in Example \ref{eg:nontrivialj}.

\subsection{Previous works} \label{sec:previous}
Previous studies on this topic mainly focus on the case of elliptic curves. Let $E$ be an elliptic curve over $\QQ$ with complex multiplication by $K$. By the work of Deuring, there exists a Hecke character $\varphi$ over $K$ attached to $E$ such that $L(E,s)=L(\varphi, s)$. The character $\varphi$ is anticyclotomic of infinite type $(1,0)$, and $A_{\varphi} = E$. By the theory of complex multiplication, $K$ must have class number one \footnote{It is well-known that there are only nine imaginary quadratic fields $\QQ(\sqrt{-d})$ of class number one: $d \in \{1,2,3,7,11,19,43,67,163\}$.}. Moreover, the prime $p$ splits (resp. remains inert, ramified) in $K$ if and only if $E$ has good ordinary (resp. good supersingular, additive) reduction at $p$. 

For CM elliptic curves, Theorem \ref{thm:mainA} has been known for a long time in the case where $p$ is unramified in $K$, i.e., $E$ has good reduction at $p$. This phenomenon was first observed by R. Greenberg in \cite{MR700770}. However, the ramified case remains challenging to address with the method in \textit{loc.cit}. See Remark \ref{rmk:greenberg} for a detailed explanation, and Remark \ref{rem:matches} for an examination that Theorem \ref{thm:mainA} matches the previous results in this case.

\subsection{Method of proof}
Our proof of Theorem \ref{thm:mainA} relies on Haijun Jia's generalization \cite{jia2024lfunctionsheckecharactersanticyclotomic} of David Rohrlich's result \cite{MR735332} on certain Hecke $L$-functions, which we summarize below.

We consider the family of \emph{anticyclotomic twists} of $\varphi$, denoted $\frX^{\ac}_{\varphi}$, consisting of the primitive Hecke characters $\chi = \varphi \rho$ where $\rho: \mathrm{Gal}(K_{\infty}^{\ac}/K) \to \CC^{\times}$ runs over continuous characters, regarded as Hecke characters via the Artin reciprocity map. For any $\chi \in \frX^{\ac}_{\varphi}$, the associated Hecke $L$-function $L(\chi, s)$ has a root number $W(\chi) \in \{\pm 1\}$ (see Section \ref{sec:heckechar} for details).

Recently, Haijun Jia proved the following theorem, which generalizes Rohrlich's main theorem in \cite{MR735332} from the case where $K$ has class number one to general $K$.

\begin{theorem}[Rohrlich, Jia] \label{thm:RJ}
For all but finitely many $\chi \in \frX^{\ac}_{\varphi}$, 
\[
\ord_{s=1} L(\chi, s) = 
\begin{cases}
0, & W(\chi) = 1, \\
1, & W(\chi) = -1.
\end{cases}
\]
\end{theorem}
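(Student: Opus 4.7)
The plan is to follow Rohrlich's strategy for class number one and indicate where Jia's extension to general $K$ requires new input. First I would set up the completed functional equation $\Lambda(\chi,s) = W(\chi)\,\Lambda(\bar{\chi},2-s)$. Because $\chi = \varphi\rho$ is anticyclotomic of infinite type $(1,0)$, one has $\bar{\chi} = \chi\circ \sfc$, so $s=1$ is the center of symmetry and the sign is precisely $W(\chi)$. This immediately gives $\ord_{s=1}L(\chi,s)\ge 0$ (even) when $W(\chi)=1$ and $\ge 1$ (odd) when $W(\chi)=-1$. It remains to prove matching upper bounds: $L(\chi,1)\neq 0$ for cofinitely many $\chi\in\frX^{\ac}_\varphi$ with $W(\chi)=1$, and $L'(\chi,1)\neq 0$ for cofinitely many $\chi\in\frX^{\ac}_\varphi$ with $W(\chi)=-1$.

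For the $W(\chi)=1$ case, the plan is to express $L(\chi,1)$ as a toric period via a Waldspurger-type formula, pairing the automorphic form attached to $\theta_\varphi$ (after Jacquet--Langlands transfer to an appropriate quaternion algebra $B/\QQ$ split at the archimedean place of the toric integration) against $\rho$ integrated over the CM torus $\mathrm{Res}_{K/\QQ}\mathbb{G}_m$. This rewrites $|L(\chi,1)|$, up to explicit nonzero local factors, as the absolute value squared of a finite weighted sum of values of a fixed test vector at the Galois orbit of a base CM point, twisted by $\rho$. Non-vanishing for cofinitely many $\chi$ then reduces to a Cornut--Vatsal-type statement: for all but finitely many characters $\rho$ factoring through $\mathrm{Gal}(K_\infty^{\ac}/K)$, the $\rho$-isotypic component of this CM divisor is non-zero. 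One expects this to be handled by an equidistribution argument for Galois orbits of CM points on the relevant Shimura set.

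For the $W(\chi)=-1$ case, I would apply the Gross--Zagier formula in the Yuan--Zhang--Zhang generality, which identifies $L'(\chi,1)$ up to an explicit non-zero constant with the N\'eron--Tate height of a CM point $P_\chi$ on a Shimura curve attached to an indefinite quaternion algebra. Non-vanishing of $L'(\chi,1)$ then becomes non-triviality of $P_\chi$, which is again a Cornut--Vatsal-type equidistribution assertion along the anticyclotomic tower. This usage of Gross--Zagier is not circular with its downstream application to Mordell--Weil ranks in Theorem~\ref{thm:mainA}, since the formula is an identity usable in either direction.

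The hard part, and what distinguishes Jia's contribution from Rohrlich's class-number-one treatment, is precisely the CM-divisor input in both cases. When $K$ has non-trivial class group one must work on Shimura varieties whose level data encode both the conductor exponent $f(\varphi_p)$ and the ramification index $j$ appearing in Theorem~\ref{thm:mainA}, track Galois orbits spread across multiple class-group components, and prove equidistribution / non-triviality for the resulting enlarged ensembles of ring-class CM points. The analytic scaffolding (functional equation, Waldspurger toric period, Gross--Zagier) is already in the literature; the main new technical work lies in this CM-divisor step, which I expect to constitute the bulk of \cite{jia2024lfunctionsheckecharactersanticyclotomic}.
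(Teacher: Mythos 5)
This statement is not proved in the paper at all: it is imported verbatim as Theorem~\ref{thm:RJ} from Rohrlich's work \cite{MR735332} (for class number one) and Jia's recent generalization \cite{jia2024lfunctionsheckecharactersanticyclotomic} (for arbitrary class number), and the paper's contribution begins only \emph{after} this black box, with the root-number computations and the application of Gross--Zagier--Kolyvagin to $A_n$. So there is no internal proof here against which your proposal can be checked line by line.

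Comparing instead against the cited sources: your outline of the parity step via the functional equation is fine, but the non-vanishing half is attacked by a strategy that is genuinely different from Rohrlich's and, as far as one can tell, from Jia's. Rohrlich's argument in \cite{MR735332} is purely analytic: he expands $L(\chi,1)$ (resp.\ $L'(\chi,1)$ when $W(\chi)=-1$) via the approximate functional equation, averages over the full Galois orbit of characters of a fixed level $n$ in the anticyclotomic tower, isolates a non-vanishing main term, and controls the remainder using estimates on the resulting exponential/character sums; Galois conjugacy of the values then upgrades non-vanishing of the average to non-vanishing for each character in the orbit, for all but finitely many orbits. There is no Waldspurger toric period, no Gross--Zagier height pairing, and no Cornut--Vatsal CM equidistribution in that argument, and Jia's extension to arbitrary class number likewise works on the analytic side by refining the averaging to account for the more complicated ideal-class structure and the ramification index $j$ you mention. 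Your Waldspurger/Gross--Zagier/Cornut--Vatsal route is a legitimate and well-known alternative for proving anticyclotomic non-vanishing, and you are right that it would not be logically circular with the downstream use of Gross--Zagier in Theorem~\ref{thm:mainA}; but it is not the route Rohrlich or Jia take, and it would also require more care than you indicate in the $p$-ramified case, which is precisely the case the present paper is most concerned with. Your closing claim that the difference between Jia and Rohrlich ``is precisely the CM-divisor input'' is therefore off target: neither proof is built on CM divisors, and the new work in Jia is on the analytic averaging, not on equidistribution of CM points.
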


From this theorem, the proof proceeds as follows:
\begin{itemize}
    \item By Theorem \ref{thm:RJ}, for sufficiently large $n > n_{\varphi}$ and any $\rho$ factoring through $\mathrm{Gal}(K_{n}^{\ac}/K)$, the value $\ord_{s=1} L(\chi,s)$ belongs to $\{0, 1\}$ for $\chi = \varphi \rho \in \frX_{\varphi}^{\ac}$. This result enables us to apply the Gross-Zagier formula and Kolyvagin theory (Theorem \ref{lemma:GZK}) to describe the Mordell-Weil rank of $A_{\varphi}(K_n^{\ac})$. However, this approach requires a more detailed analysis of the abelian variety $A_{\varphi}$, which may be more subtle than it initially appears.
    \item Furthermore, the root number $W(\chi)$ determines $\ord_{s=1} L(\chi,s)$ for such $\rho$. We compute these root numbers in terms of $W(\varphi)$ and $\rho$ by first computing local root numbers. These are carried out in Section \ref{sec:rootnumbercompute} with preparations in Section \ref{sec:heckechar}. The final result on the computation of root numbers is Theorem \ref{thm:rootnumber}.
\end{itemize}

The main innovation of our work lies in these computations of root numbers, particularly for the ramified case.

\subsection{Application I: Finiteness of $A_{\varphi}(K_{\infty}^{\ac})$} \label{sec:introfurther}
Let $F$ be a number field. It is known that along any $\ZZ_p$-extension $F_{\infty}/F$, the $p$-primary part of the class groups $\Cl(F_n)[p^{\infty}]$ satisfies the following growth property:
\[
\# \Cl(F_n)[p^{\infty}] = p^{\lambda n + \mu p^n + \nu}, \quad \text{for } n \gg 0,
\]
where $\lambda, \mu$, and $\nu$ are non-negative integers. For $F = \QQ$ and the cyclotomic $\ZZ_p$-extension $\QQ_{\infty}^{\cyc}/\QQ$, it is known that $\mu = 0$. This led to expectations of $\mu = 0$ for a wider class of $\ZZ_p$-extensions.

Historically, the anticyclotomic extension $K_{\infty}^{\ac}/K$ for CM extensions $K/F$ was first studied in \cite[Section 2]{MR357371}, where $K = \QQ(\zeta_{p^n})$ and $F = \QQ(\zeta_{p^n})^{+}$, the maximal totally real subfield of $K$, were considered instead of the imaginary quadratic extension $K/\QQ$ as in this article. In \textit{loc.cit.}, Iwasawa showed that for this $\ZZ_p$-extension, the $\mu$-invariant is \emph{nonzero}. Since then, anticyclotomic $\ZZ_p$-extensions have played an important role in providing counterexamples to phenomena observed in cyclotomic $\ZZ_p$-extensions. This pattern extends to the arithmetic of elliptic curves and abelian varieties.

Consider the case of elliptic curves $E$ as in Section \ref{sec:previous} that have good reduction at $p$. It is shown\footnote{In \cite{MR735333}, Rohrlich proved that the rank $\rank_{\ZZ} E(\QQ_{n}^{\ac})$ is bounded as $n$ tends to infinity. In the appendix of \cite{MR659153} by Ribet, it is shown that the torsion part of $E(\QQ_{\infty}^{\cyc})$ is finite. Combining these results, we conclude that $E(\QQ_{\infty}^{\cyc})$ is a finitely generated abelian group.} in \cite{MR735333} and \cite{MR659153} that $E(\QQ_{\infty}^{\cyc})$ is always a finitely generated abelian group. One then wonders whether $E(F_{\infty})$ is always finitely generated for any $\ZZ_p$-extension $F_{\infty}/F$. The anticyclotomic $\ZZ_p$-extension $K_{\infty}^{\ac}/K$ again provides a counterexample to this expectation, as first observed in \cite{MR700770} (see also \cite[Theorem 1.7, Theorem 1.8]{MR1860044}, although no rigorous proof is found in \textit{loc.cit.}). 

In this article, by using Theorem \ref{thm:mainA}, we provide a sufficient and necessary condition for $A_{\varphi}(K_{\infty}^{\ac})$ to be a finitely generated abelian group. This is made possible by our considerations of the case where $p$ is ramified in $K$, together with a finiteness result on the torsion subgroup of $A_{\varphi}(K_{\infty}^{\ac})$, for which we provide a proof. Details are introduced in Section \ref{sec:finitetor} with the main result stated as Corollary \ref{coro:finitelygenerated}.

\subsection{Application II: Distribution of vanishing orders among anticyclotomic twists} \label{sec:introdistribution}
Let $\varphi$ be an anticyclotomic Hecke character over $K$ of infinite type $(1,0)$ as before. When we deform it in the anticyclotomic $\ZZ_p$-extension $K_{\infty}^{\ac}$ by twisting Galois characters $\rho: \Gal(K_{\infty}^{\ac}/K) \rightarrow \CC^{\times}$, a natural question arises: 
\begin{quote}
    How is the (parity of the) vanishing order of $L(\varphi \rho,s)$ at $s=1$ distributed? 
\end{quote} 

Theorem \ref{thm:RJ} states that for Galois characters $\rho: \Gal(K_{\infty}^{\ac}/K) \rightarrow \CC^{\times}$ of sufficiently large level \( n \), the order of the \( L \)-function \( L(\varphi \rho, s) \) at \( s=1 \) is either \( 0 \) or \( 1 \), completely determined by the root number of \( \varphi \rho \). This result enables us to apply our computation on root numbers, especially Theorem \ref{thm:rootnumber}, to address the aforementioned problem. In this regard, we derive the following result, stated informally here but presented and proved rigorously in Section \ref{sec:distribution}:

\begin{itemize}
    \item When $p$ splits in $K$, either 100\% of the anticyclotomic twists $\varphi \rho$ have even vanishing order at $s=1$, or 0\% of them do, depending on whether $W(\varphi) = 1$ or not.
    \item When $p$ remains inert in $K$, the probabilities of $\varphi \rho$ having even or odd vanishing order at $s=1$ fluctuate in the interval $[1/(p+1), p/(p+1)]$. This fluctuation is characterized by the parity of the level of $\rho$, as well as the root number $W(\varphi)$, the conductor of $\varphi_p$, and the class group of $K$.
    \item When $p$ ramifies in $K$, then 50\% of the anticyclotomic twists $\varphi \rho$ have even vanishing order at $s=1$, and 50\% have odd vanishing order at $s=1$. This phenomenon is independent of the root number $W(\varphi)$.
\end{itemize}

As we can see from the result, the case where $p$ remains inert in $K$ is the most subtle and surprising.

\subsection{Notations}
Let $K$ be any number field, i.e. a finite extension of the field of rational numbers $\QQ$, we denote by
\begin{itemize}
    \item $\scV_{K}$ the set of places of $K$,
    \item $\Gal_K$ the absolute Galois group of $K$,
    \item $\calO_{K}$ the ring of integers of $K$,
    \item $K_v$ the completion of $K$ with respect to a place $v$ of $K$, with the ring of integers $\calO_{K_v}$, a uniformizer $\varpi_{K_v}$ and residue field $k_{K_v}$ of cardinality $q_{K_v}$. If it is clear from contexts, we simply denote these objects by $\calO_{v}, \varpi_{v}$, $k_v$ and $q_v$ respectively.
    \item Suppose $v$ is a finite place of $K$, we denote $\frp_{v}$ be the prime ideal of $\calO_K$ associated to the place $v$.
\end{itemize}
Let $K$ be a local field, i.e. a finite extension of the field of $\ell$-adic rational numbers $\QQ_{\ell}$. We denote by
\begin{itemize}
    \item $\calO_K$ the ring of integers of $K$, and $U_{K}$ be the group of invertible elements of $\calO_{K}$,
    \item $\varpi_K$ a uniformizer of $K$,
    \item $U_{K}^{(m)} := 1 + \varpi_{K}^{m} \calO_{K}$ for integers $m \geq 1$,
    \item $k_{K}$ the residue field of the local ring $\calO_{K}$, with cardinality $q_{K}$,
    \item $\dif_K x$ any Haar measure on $K$, which shall be specified according to our need in the text.
\end{itemize}

One will not be confused by the different use of $K$ since the context is clear.

If it is clear from contexts, we simply denote these objects by $\calO, U, \varpi, k, q$ and $\dif x$ respectively. Other notations will be introduced in the text. Conventions on Hecke characters are introduced in Section \ref{sec:heckechar}. Here we record that $\frX_{?}^{?}, \frY_{?}^{?}$ and $\frZ_{?}$, as sets of certain Hecke characters, are defined in Section \ref{sec:galoischar}, Proposition \ref{lemma:isogeny} and \eqref{eq:defYnpm} in the text.

\subsection*{Acknowledgment} We are deeply grateful to our doctoral advisor, Professor Xin Wan, for his insightful guidance throughout this project. 

This article is a natural continuation of the work initiated by our close friend Haijun Jia in \cite{jia2024lfunctionsheckecharactersanticyclotomic}. We are deeply grateful to him for his insightful contributions and for inspiring
the development of this work. We were deeply saddened to learn of his untimely passing, and we dedicate this article to his memory.

We also extend our thanks to Ashay Burungale, Olivier Fouquet, Jeffrey Hatley, Shinichi Kobayashi, Hang Yin, and Luochen Zhao for their valuable discussions. Prof. Xin Wan introduced us to the paper \cite{MR1133776}, which he discussed with David Rohrlich, while Dr. Luochen Zhao brought \cite{MR4742720} to our attention following his discussions with Kazuto Ota.

Haidong Li completed part of this work during his doctoral studies at Academy of Mathematics and Systems Science, Chinese Academy of Sciences, whose supportive environment he gratefully acknowledges.  Ruichen Xu completed part of this work during his stay at the Laboratoire de Mathématiques de Besançon at the Université Marie \& Louis Pasteur, hosted by Professor Olivier Fouquet. He is sincerely grateful for their generous support and hospitality. This visit is funded by the “2024 International Research Collaboration Program for Postgraduate Students” of the Academy of Mathematics and Systems Science, Chinese Academy of Sciences.

The authors would like to thank the anonymous referee for a careful reading of the manuscript and for many valuable comments and suggestions, which have significantly improved the clarity and quality of the paper.

\section{Preliminaries}
\label{sec:heckechar}

In this section, we review some basic notions on Hecke characters and some results on the class field theory of the anticyclotomic $\ZZ_p$-extension of imaginary quadratic fields.

\subsection{Hecke characters}

In this section, we recall some basic notions on Hecke characters.

\subsubsection{Hecke characters and their $L$-functions}
Let $K$ be any number field, i.e., a finite extension of $\QQ$. In this article, a Hecke character over $K$ is a continuous group homomorphism $\chi : \AA_{K}^{\times} \rightarrow \CC^{\times}$ that is trivial on $K^{\times}$. For each place $v$ of $K$, we denote by $\chi_v : K_v^{\times} \rightarrow \CC^{\times}$ the $v$-component of $\chi$. We consider the decreasing filtration:
\[
\calO_{K_v}^{\times} \supset 1+\varpi_{K_v} \calO_{K_v} \supset 1+\varpi_{K_v}^{2} \calO_{K_v} \supset \cdots \supset 1+\varpi_{K_v}^{e} \calO_{K_v} \supset \cdots.
\]
We say $\chi_v$ is \emph{unramified} if $\chi_v(\calO_{K_v}^{\times}) = 1$; otherwise, it is called \emph{ramified}. When $\chi_v$ is ramified, the \emph{exponent of conductor} of $\chi_v$ is the smallest integer $f(\chi_v)$ such that $\chi_v(1+\varpi_{K_v}^{f(\chi_v)} \calO_{K_v}) = 1$. The \emph{conductor} of $\chi_v$ is then defined as $\frf(\chi_v) := \frp_v^{f(\chi_v)}$, where $\frp_v$ is the prime ideal in $K$. By convention, we set $f(\chi_v) = 0$ if $\chi_v$ is unramified.

Let $K$ be an imaginary quadratic field. We say a Hecke character $\chi$ is of infinite type $(a,b) \in \ZZ^{2}$ if 
\[
\chi_{\infty}(z) = z^{-a} \barz^{-b}.
\]
In his doctoral thesis \cite{MR2612222}, Tate developed a theory of Hecke $L$-functions $L(s,\chi)$ using harmonic analysis over local and global fields. We briefly recall some basic facts about these $L$-functions:
\begin{itemize}
    \item Let $\Lambda(s,\chi)$ be the complete $L$-function. If $k = a+b+1$, then there exists a complex number $W(\chi)$ with $\abs{W(\chi)} = 1$ such that
    \[
    \Lambda(s,\chi) = W(\chi)\Lambda(k-s, \barchi).
    \]
    The constant $W(\chi)$ is called the \emph{root number} of $\chi$. From the functional equation, the central point is $s = k/2$.
    \item The root number depends only on the unitary part of $\chi$, i.e., $W(\chi) = W(\chi_0)$, as introduced in \cite[Section 5.1 of Lecture 2]{MR2882696}. Thus, assuming $\varphi$ is a unitary character does not lose generality.
    \item Since the complex conjugation $\sfc$ permutes the ideals of $\calO_{K}$, $L(\chi, s) = L(\chi \circ \sfc, s)$. Consequently, if $\chi$ is \emph{anticyclotomic}, i.e., $\chi \circ \sfc = \barchi$, the functional equation becomes
    \[
    \Lambda(s,\chi) = W(\chi)\Lambda(k-s, \chi),
    \]
    with $W(\chi) \in \{\pm 1\}$.
\end{itemize}

The following proposition provides equivalent characterizations of anticyclotomic characters (see \cite[Proposition 1]{MR658544}).
\begin{proposition} \label{prop:anticychar}
Let $\chi$ be a unitary Hecke character over $K$ of infinite type $(1,0)$. Then the following statements are equivalent:
\begin{enumerate}[\rm (a)]
    \item $\chi$ is anticyclotomic;
    \item $\chi \vert_{\AA_{\QQ}^{\infty}} = \kappa$, where $\kappa$ is the quadratic character attached to the extension $K/\QQ$ by class field theory.
\end{enumerate}
Moreover, if $\chi$ is anticyclotomic, it is ramified at all primes of $K$ that are ramified over $\QQ$.
\end{proposition}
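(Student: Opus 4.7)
The plan is to extract both implications from the tautological identity
\[
\chi \cdot (\chi \circ \sfc) \;=\; \chi|_{\AA_{\QQ}^{\times}} \circ N_{K/\QQ},
\]
valid for every Hecke character $\chi$ of $K$ (since $N_{K/\QQ}(x) = x\,\sfc(x)$ on ideles), combined with global class field theory for the quadratic extension $K/\QQ$.

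For the direction (a) $\Rightarrow$ (b), I would recast the left-hand side of the identity, under the anticyclotomic hypothesis, as $\chi\bar{\chi} = |\chi|^{2}$, which for an algebraic character of infinite type $(1,0)$ coincides with $\|\cdot\|_{\AA_{K}}^{-1}$. Using the compatibility $\|N_{K/\QQ}(x)\|_{\AA_{\QQ}} = \|x\|_{\AA_{K}}$, this forces $\chi|_{\AA_{\QQ}^{\times}}$ to agree with $\|\cdot\|_{\AA_{\QQ}}^{-1}$ on the subgroup $N_{K/\QQ}(\AA_{K}^{\times}) \subset \AA_{\QQ}^{\times}$. Class field theory identifies $\AA_{\QQ}^{\times}/(\QQ^{\times} \cdot N_{K/\QQ}(\AA_{K}^{\times}))$ with $\{\pm 1\}$, detected by $\kappa$, so $\chi|_{\AA_{\QQ}^{\times}}$ is either $\|\cdot\|^{-1}$ or $\kappa \cdot \|\cdot\|^{-1}$ as a Hecke character of $\QQ$. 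The archimedean factor $\chi_{\infty}(x) = x^{-1} = \operatorname{sgn}(x)\,|x|^{-1}$ carries a nontrivial sign, ruling out the first possibility; restricting to the finite ideles then yields $\chi|_{\AA_{\QQ}^{\infty}} = \kappa$.

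For the converse (b) $\Rightarrow$ (a), the same archimedean analysis run in reverse pins down $\chi|_{\AA_{\QQ}^{\times}} = \kappa \cdot \|\cdot\|_{\AA_{\QQ}}^{-1}$ as a Hecke character of $\QQ$. Composing with the norm, one computes $(\chi|_{\AA_{\QQ}^{\times}} \circ N_{K/\QQ})(x) = \kappa(N_{K/\QQ}(x))\,\|N_{K/\QQ}(x)\|^{-1} = \|x\|_{\AA_{K}}^{-1} = |\chi(x)|^{2}$, since $N_{K/\QQ}(\AA_{K}^{\times}) \subset \ker \kappa$ by class field theory. Plugging this back into the opening identity yields $\chi \cdot (\chi \circ \sfc) = |\chi|^{2} = \chi \cdot \bar{\chi}$, and cancelling $\chi$ gives $\chi \circ \sfc = \bar{\chi}$, which is (a). The ``moreover'' clause is then immediate from (b): at a prime $p$ ramified in $K/\QQ$, the local quadratic character $\kappa_{p}$ is ramified on $\ZZ_{p}^{\times}$, and the identification $\chi|_{\AA_{\QQ}^{\infty}} = \kappa$ transports that nontrivial ramification to $\chi$ at the prime of $K$ above $p$.

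The main obstacle is the careful bookkeeping of normalizations: since $\chi_{\infty}(z) = z^{-1}$ is not literally unitary, an adelic norm factor must be peeled off before the finite restriction of $\chi$ can be identified with the finite-order Dirichlet character $\kappa$, and it is precisely the sign of the archimedean component that selects $\kappa$ rather than the trivial character. Once those conventions are fixed, everything else is formal, being dictated by the identity above and the order-$2$ quotient $\AA_{\QQ}^{\times}/(\QQ^{\times} \cdot N_{K/\QQ}(\AA_{K}^{\times}))$ coming from class field theory.
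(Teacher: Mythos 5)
The paper does not give its own proof of this proposition; it simply points to Rohrlich's Proposition~1 in \cite{MR658544}. Your argument is, in substance, the one Rohrlich uses: the tautology $\chi\cdot(\chi\circ\sfc)=(\chi|_{\AA_{\QQ}^{\times}})\circ N_{K/\QQ}$ is exactly the right lever, the reduction of $|\chi|^{2}$ to $\|\cdot\|_{\AA_K}^{-1}$ is correct (it relies, as you implicitly use, on $\chi$ being trivial on $K^{\times}$ so that the positive real-valued character $|\chi|^{2}$ must be a power of the idelic norm, pinned down by the archimedean exponent), the class field theory identification $\AA_{\QQ}^{\times}/(\QQ^{\times}N_{K/\QQ}(\AA_{K}^{\times}))\cong\{\pm1\}$ is the right invocation, and the archimedean sign analysis is the correct way to exclude the trivial twist. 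The converse direction and the ``moreover'' clause are handled properly.

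One point deserves a sharper statement, though you do flag it at the end. What the argument actually proves is $\chi|_{\AA_{\QQ}^{\times}}=\kappa\,\|\cdot\|_{\AA_{\QQ}}^{-1}$; literally restricting this to $\AA_{\QQ}^{\infty}$ leaves a residual factor $\prod_{p}|\cdot|_{p}^{-1}$ that is \emph{not} trivial on $\AA_{\QQ}^{\infty}$, so the clean equality ``$\chi|_{\AA_{\QQ}^{\infty}}=\kappa$'' as written only holds after passing to the unitary normalization of $\chi$ (replace $\chi_{\infty}(z)=z^{-1}$ by $z^{-1}|z|$). Indeed, with $\chi_{\infty}(z)=z^{-1}$ the literal equality $\chi|_{\AA_{\QQ}^{\infty}}=\kappa|_{\AA_{\QQ}^{\infty}}$ is incompatible with triviality of $\chi$ on $\QQ^{\times}$ by the product formula. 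This is really an imprecision in the proposition's phrasing (inherited from an overly compressed restatement of Rohrlich); your proof establishes the correct content, and your closing remark about ``peeling off an adelic norm factor'' is exactly the right caveat. If you want the bookkeeping airtight, state what you prove as $\chi|_{\AA_{\QQ}^{\times}}=\kappa\,\|\cdot\|_{\AA_{\QQ}}^{-1}$ and explain once that (b) is this identity read on finite ideles after unitarization.
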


\begin{remark}
Let $A$ be an elliptic curve over $\QQ$ with complex multiplication by $K$, and let $\varphi$ be the Grössencharacter attached to $A$. By construction, $\varphi$ is anticyclotomic of infinite type $(1,0)$.
\end{remark}

\subsubsection{Galois characters} \label{sec:galoischar}
Let $L/K$ be any abelian extension of number fields, and let $\rho: \Gal(L/K) \rightarrow \CC^{\times}$ be a character of the Galois group $\Gal(L/K)$. By global class field theory, there is a canonical isomorphism called the Artin reciprocity map
\[
\Art_{L/K}:\AA_{K}^{\times}/ N_{L}  \xrightarrow{\sim} \Gal(L/K),
\]
where $N_{L}$ is an open subgroup of $\AA_{K}^{\times}$ containing $K^\times$. Via $\Art_{L/K}$, we can view $\rho$ as a Hecke character via the composition
\begin{equation} \label{eq:identifygaloishecke}
    \AA_{K}^{\times} \twoheadrightarrow \AA_{K}^{\times}/ N_{L} \xrightarrow{\Art_{L/K}} \Gal(L/K) \xrightarrow{\rho} \CC^{\times}.
\end{equation}
For notational simplicity, we still denote this character by $\rho$.

For each positive integer $n$, we define the following sets of Hecke characters:
\begin{itemize}
    \item Let $\frY$ be the set of all Galois characters $\rho: \Gal(K_{\infty}^{\ac}/K) \rightarrow \CC^{\times}$. They are regarded as Hecke characters via \eqref{eq:identifygaloishecke}. Throughout this article, we will simply call $\rho \in \frY$ as a \emph{Galois character}, if this will not cause any confusion. 
    \item For any integer $n \geq 0$, we define $\frY_{n}$ to be the subset of $\frY$, consisting of Galois characters $\rho: \Gal(K_{\infty}^{\ac}/K) \rightarrow \CC^{\times}$ factoring through $\Gal(K_{n}^{\ac}/K)$. We denote $\frY_{n}^{\dagger} := \frY_{n} \smallsetminus \frY_{n-1}$, and characters $\rho \in \frY_{n}^{\dagger}$ are said to be of \emph{level} $n$.
\end{itemize}
Given an anticyclotomic Hecke character $\varphi$ of infinite type $(1,0)$, we define the sets
\[
\frX_{\varphi}^{\ac} := \{\varphi \rho: \rho \in \frY \}, \quad \frX_{\varphi, n}^{\ac} := \{\varphi \rho: \rho \in \frY_n \}, \, \text{ and } \frX_{\varphi,n}^{\ac, \dagger} := \{\varphi \rho: \rho \in \frY_n^{\dagger} \}.
\]
We also say $\chi \in \frX_{\varphi, n}^{\ac, \dagger}$ is of \emph{level} $n$.

Assuming $\varphi$ is anticyclotomic, let $\rho: \Gal(K_\infty^{\ac}/K) \rightarrow \CC^{\times}$ be a Galois character. Viewing it as a Hecke character of $\calK$, we have $\rho \circ \sfc = \overline{\rho}$. Therefore, every character $\chi \in \frX_{\varphi}^{\ac}$ is anticyclotomic.
   
\subsection{Class field theory of anticyclotomic $\ZZ_p$-extensions}\label{subsection:localclassfieldtheory}

In this subsection, we recall basic facts from the class field theory of anticyclotomic $\ZZ_p$-extensions, which will be used in computing the root numbers.

We consider a more general setting as follows. Let $K/F$ be a CM extension of number fields, i.e. $F$ is a totally real number field and $K$ is a quadratic totally imaginary field extension of $F$. Let $w$ be a finite place of $F$ above $p$, and $v$ a finite place of $K$ above $w$.

Let $K_{\infty}^{\ac}$ denote an anticyclotomic $\ZZ_p$-extension of $K$, meaning that the action of $\Gal(K/F)$ on $\Gal(K_{\infty}^{\ac}/K)$ by conjugation is $-1$. Let $K_{n}^{\ac}$ be the intermediate field such that $\Gal(K_{n}^{\ac}/K) \simeq \ZZ/p^n\ZZ$. Also, let $v_n$ and $v_{\infty}$ denote places of $K_{n}^{\ac}$ and $K_{\infty}^{\ac}$, respectively, such that $v_\infty \mid v_n \mid v$ for all $n \geq 1$. Denote by $F_w$, $K_v$, $K_{n,v_n}$, and $K_{\infty, v_\infty}$ the corresponding completions at these places. Furthermore, let $G_n := \Gal(K_n^{\ac}/K)$ and $D_n := \Gal(K_{n,v_n}/K_v)$ be the Galois group and the decomposition group of $v_n \mid v$, respectively. These notations are summarized in the following diagram:
\[
\begin{tikzcd}
     & {K_{\infty,v_\infty}} \arrow[d, no head]                                            \\
    K_\infty^{\ac} \arrow[d, no head] \arrow[ru, no head]                                                                     & {K_{n,v_n}} \arrow[d, no head] \arrow[dd, "D_n" description, no head, bend left=49] \\
    K_n^{\ac} \arrow[ru, no head] \arrow[d, no head] \arrow[dd, "G_n" description, no head, bend right=49] & {K_{j,v_j}} \arrow[d, no head]                                                      \\
    K_j^{\ac} \arrow[ru, no head] \arrow[d, no head]                                                                          & K_v \arrow[d, no head]                                                              \\
    K \arrow[ru, no head] \arrow[d, no head]                                                                            & F_w                                                                                 \\
    F \arrow[ru, no head]                                                                                               &                                                                                    
\end{tikzcd}
\]

The decomposition of a place $v$ of $K$ in $K_{\infty}^{\ac}$ falls into the following two types (see \cite[Proposition 13.2, Lemma 13.3]{washington1997}):

\begin{itemize}
    \item The place $v$ is unramified in $K_{\infty}^{\ac}$. This is the case for all $v$ not dividing $p$. For this type, the local class field theory is quite explicit.
    \item The place $v$ is unramified in $K_j^{\ac}/K$ for some $j \geq 1$, and each place $v_j$ of $K_j^{\ac}$ above $v$ is totally ramified in $K_\infty^{\ac}$. Let $H_K$ be the Hilbert class field of $K$, then $j$ is the nonnegative integer such that $H_K \cap K_\infty^{\ac} = K_j^{\ac}$. In particular, when $p$ does not divide the class number of $K$ (which is automatic when $K$ has class number one), we have $j=0$. This case can only happen when $w$ divides $p$, but not all places of $K$ above $p$ fall in this case. We write
    \[
    \frp_v \calO_{K_{j}^{\ac}} = \prod_{l = 1}^{p^g} \frp_{v_j^{(l)}},
    \]
    that is, $\frp_v$ decomposes into $p^g$ primes in $K_{j}^{\ac}$. Then for $n \geq j$, one checks that $D_{n} = \ZZ/p^{n-g}\ZZ$.
\end{itemize}

We provide a few examples to show that it is indeed possible that 
\[
H_{K} \cap K_{\ac}^{\infty}  \neq K, \quad \text{i.e. } j > 0.
\]
For this purpose, we use the following results in \cite[Corollary on p.~59]{minardi1987} and \cite[Theorem 3]{oh2015anticyclotomic}.

\begin{theorem} \label{thm:minardi}
    Let $K = \QQ(\sqrt{-D})$ be an imaginary quadratic field with $D \not\equiv 3 \pmod{9}$ and $p = 3$, its $p$-Hilbert class field $L_K$ is contained in the anticyclotomic $\ZZ_3$-extension $K_{\infty}^{\ac}/K$ if and only if the class number of the real quadratic field $K^{\sharp} := \QQ(\sqrt{3D})$ is not divisible by $3$.
\end{theorem}

\begin{example} \label{eg:nontrivialj}
We now provide three examples when $H_{K} \cap K_{\infty}^{\ac} \neq K$ when $p = 3$. The first two examples lies in \cite[Table 6.1]{minardi1987} and the third example comes from \cite[Proof of Theorem 2]{oh2015anticyclotomic}.
\begin{itemize}
    \item Let $K = \QQ(\sqrt{-23})$, then $K^{\sharp} = \QQ(\sqrt{69})$ with class number $1$. By Theorem \ref{thm:minardi}, we have $H_{K} \subset K_{\infty}^{\ac}$, and one computes that the class number of $K$ is $3$, so $H_{K} \cap K_{\infty}^{\ac} = K_1^{\ac}$. Additionally, $p$ splits in $K$.
    \item Let $K = \QQ(\sqrt{-231})$, then $K^{\sharp} = \QQ(\sqrt{693}) = \QQ(\sqrt{77})$ with class number $1$. By Theorem \ref{thm:minardi}, we have $H_{K} \subset K_{\infty}^{\ac}$, and one computes that the class number of $K$ is $12$, so $H_{K} \cap K_{\infty}^{\ac} = K_1^{\ac} \subsetneq H_K$. This example shows that $H_{K} \cap K_{\infty}^{\ac}$ may be strictly larger than $K$ but need not exhaust the full Hilbert class field $H_K$. Additionally, $p$ ramifies in $K$.
    \item Let $K = \QQ(\sqrt{-1423})$, then $K^{\sharp} = \QQ(\sqrt{4269})$ with class number $1$. By Theorem \ref{thm:minardi}, we have $H_{K} \subset K_{\infty}^{\ac}$, and one computes that the class number of $K$ is $9$, so $H_{K} \cap K_{\infty}^{\ac} = K_2^{\ac}$. This example shows that it is possible to have $j \geq 2$. Additionally, $p$ remains inert in $K$.
\end{itemize}    
\end{example}

\begin{proposition} \label{prop:localtotram}
With the notations above, if $v \mid w$ is inert or ramified, then $g=j$ and $K_{n,v_n}^{\ac} = K_v$ for $n \leq j$.
\end{proposition}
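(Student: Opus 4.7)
The plan is to reduce both assertions to the single statement that $v$ splits completely in $K_j^{\ac}/K$. Once this is established we immediately obtain $g = j$, since then $\frp_v$ decomposes into $[K_j^{\ac}:K] = p^j$ primes in $K_j^{\ac}$, and also $K_{n,v_n} = K_v$ for every $n \leq j$, since $K_n^{\ac} \subseteq K_j^{\ac}$ and complete splitting descends.

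To establish complete splitting I would combine class field theory with the anticyclotomic hypothesis. Because $K_j^{\ac} \subseteq H_K$, the extension $K_j^{\ac}/K$ is unramified everywhere, so the decomposition group at $v$ inside $G_j$ is cyclic, generated by the Frobenius $\mathrm{Frob}_v$, which under the Artin reciprocity surjection $\Cl(K) \twoheadrightarrow G_j$ is the image of the ideal class $[\frp_v]$. The key input is that when $v \mid w$ is inert or ramified in $K/F$, the ideal $\frp_v$ is the unique prime of $K$ above $w$, so $\sfc(\frp_v) = \frp_v$, whence $\sfc \cdot [\frp_v] = [\frp_v]$ in $\Cl(K)$. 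By the $\Gal(K/F)$-equivariance of the Artin map, this translates into $\sfc \cdot \mathrm{Frob}_v = \mathrm{Frob}_v$ in $G_j$. On the other hand, the defining property of the anticyclotomic extension is that $\sfc$ acts on $G_j$ by $-1$, so $\sfc \cdot \mathrm{Frob}_v = -\mathrm{Frob}_v$. Combining these identities, $2\,\mathrm{Frob}_v = 0$ in $G_j \cong \ZZ/p^j\ZZ$, and since $p$ is odd this forces $\mathrm{Frob}_v = 0$, i.e.\ $v$ splits completely in $K_j^{\ac}/K$.

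The argument itself is short; the only somewhat delicate point is essentially formal, namely setting up correctly the action of $\sfc$ on $G_j$ by conjugation inside the Galois group $\Gal(K_\infty^{\ac}/F)$ and verifying its compatibility with the Galois action on ideal classes through the Artin reciprocity map. Once this equivariance is in place, the proof reduces to the elementary observation that a $2$-torsion element in a cyclic $p$-group with $p$ odd must vanish.
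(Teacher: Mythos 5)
Your proof is correct, but it takes a genuinely different route from the paper's. The paper argues locally at $v$: it invokes local class field theory to observe that the unique unramified $\ZZ_p$-extension $L$ of $K_v$ is abelian over $F_w$ (so $\sfc$ acts trivially on $\Gal(L/K_v)$), hence $L/K_v$ cannot be anticyclotomic, and since $K_{\infty,v_\infty}/K_v$ is anticyclotomic it must be totally ramified; the conclusion $g=j$ and $K_{n,v_n}=K_v$ for $n\le j$ is read off from this. You instead argue globally: using that $K_j^{\ac}\subseteq H_K$ is everywhere unramified, you identify the decomposition group of $v$ in $G_j$ as generated by $\mathrm{Frob}_v$, the image of $[\mathfrak p_v]$ under the Artin surjection $\Cl(K)\twoheadrightarrow G_j$, and then play off the $\Gal(K/F)$-equivariance of the Artin map (which, since $\mathfrak p_v$ is the unique prime above $w$, gives $\sfc\cdot\mathrm{Frob}_v=\mathrm{Frob}_v$) against the anticyclotomic condition ($\sfc\cdot\mathrm{Frob}_v=-\mathrm{Frob}_v$), concluding that $\mathrm{Frob}_v$ is $2$-torsion in the cyclic $p$-group $G_j$ and hence trivial. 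Both approaches ultimately rest on the same elementary fact about $2$-torsion in odd-order cyclic $p$-groups, but your version is purely global and avoids the structure theory of $\ZZ_p$-extensions of local fields, while the paper's local version has the side benefit of placing the proof in the same local-class-field-theory framework used throughout the surrounding subsection (in particular Proposition \ref{prop:localrho}).
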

\begin{proof}
Since $v \mid w$ is inert or ramified, $K_v/F_w$ is a quadratic extension. By local class field theory (see, for example, \cite[Remark 1.8.2]{MR1941965}), $K_v$ admits a unique unramified $\ZZ_p$-extension $L$, obtained by adjoining suitable roots of unity, and $[K_v:\QQ_p]$ independent totally ramified $\ZZ_p$-extensions. The extension $L/F_w$ is abelian as $L$ is the compositum of $K_v$ and the unique unramified $\ZZ_p$-extension of $F_w$. Hence, $L/K_v$ is not anticyclotomic. Consequently, since $K_{\infty, v_{\infty}}^{\ac}/K_v$ is anticyclotomic, it must be totally ramified. Thus, $g=j$ and $K_{n,v_n}^{\ac} = K_v$ for $n \leq j$.
\end{proof}

Let $\rho: \Gal(K_{\infty}^{\ac}/K) \rightarrow \CC^{\times}$ be a Galois character of level $n$, regarded as a Hecke character by global class field theory, as introduced in Section \ref{sec:galoischar}. By the local-global compatibility of the Artin reciprocity map, the $v$-component of $\rho$ can be viewed as a continuous group homomorphism
\begin{equation} \label{eq:identifyDn}
    \rho_v: K^\times_{v}/\Norm((K^{\ac}_{n,v_n})^{\times}) \xrightarrow{\Art_{K_v}} D_n \rightarrow \CC^{\times},
\end{equation}
where $\Art_{K_v}$ is the local Artin reciprocity map, and $\Norm: (K^{\ac}_{n,v_n})^{\times} \rightarrow K_v^{\times}$ is the norm map. 

From this point onward, we assume that $F=\QQ$ and that $K$ is an imaginary quadratic extension of $\QQ$. We write $K=\QQ(\sqrt{-D})$, where $D$ is a positive square-free integer. For each \emph{ramified} local extension $K_v/\QQ_w$, we fix the uniformizer $\varpi_{K_v}:=\sqrt{-D}$. The basic properties of the associated local characters $\rho_v$ are summarized in the following proposition.

\begin{proposition} \label{prop:localrho}
With the notations as above, we have the following.
\begin{enumerate}[\rm (i)]
    \item When $v \nmid p$, the character $\rho_v$ is unramified.
    \item When $v \mid p$ and $v \mid w$ is ramified, then $\rho_v(\varpi_{K_v}) = 1$. Moreover,
    \[
    f(\rho_v) = \begin{cases}
    0, &\quad \text{when } n \leq j, \\
    n-j+1, &\quad \text{when } w \mid p \text{ is inert and } n > j, \\
    2(n-j), &\quad \text{when } w \mid p \text{ is ramified and } n > j.
    \end{cases}
    \]
\end{enumerate}
\end{proposition}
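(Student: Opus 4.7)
The plan for (i) is immediate. Since $v \nmid p$, the local extension $K_{\infty,v_\infty}^{\ac}/K_v$ is unramified (as recorded in the preceding discussion), so the local Artin map sends $\calO_{K_v}^\times$ into the trivial inertia subgroup of $\Gal(K_{\infty,v_\infty}^{\ac}/K_v)$. Hence $\rho_v$ kills $\calO_{K_v}^\times$ and is therefore unramified.

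For the first assertion in (ii), I would exploit the anticyclotomic identity $\rho \circ \sfc = \overline{\rho}$. Restricted to the $v$-component, $\sfc$ becomes the nontrivial element of $\Gal(K_v/F_w)$; since $v$ is ramified, it sends $\varpi_{K_v} = \sqrt{-D}$ to $-\varpi_{K_v}$. This yields $\rho_v(-1)\,\rho_v(\varpi_{K_v}) = \overline{\rho_v(\varpi_{K_v})}$. The local character $\rho_v$ factors through $D_n$, which has $p$-power order with $p$ odd, so $\rho_v(-1) = 1$. Thus $\rho_v(\varpi_{K_v})$ is simultaneously real and a $p$-power root of unity, forcing $\rho_v(\varpi_{K_v}) = 1$.

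For the conductor formula with $F = \QQ$, the case $n \leq j$ is trivial by Proposition \ref{prop:localtotram}: $K_{n,v_n} = K_v$, so $\rho_v$ is trivial and $f(\rho_v) = 0$. For $n > j$, local class field theory realizes $\rho_v$ as a faithful character of the cyclic quotient $K_v^\times / \Norm(K_{n,v_n}^{\times}) \cong D_n \cong \ZZ/p^{n-j}\ZZ$. Translating the anticyclotomic condition through the relation $\Art_{K_v}(\sfc(x)) = \Art_{K_v}(x)^{-1}$, one sees that $\Norm_{K_v/\QQ_p}(K_v^\times)$ is contained in $\Norm(K_{n,v_n}^{\times})$, so $\rho_v$ factors through the pro-$p$ anticyclotomic quotient of $K_v^\times$. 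Using the $p$-adic logarithm, this quotient is identified with the $\sfc$-minus eigenspace of $\calO_{K_v}$ --- a free $\ZZ_p$-module of rank one --- modulo the image of $1 + p\ZZ_p$.

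It then remains to locate the image of the filtration $\{U_{K_v}^{(m)}\}_{m \geq 1}$ in this rank-one quotient, relative to $p^{n-j}\ZZ_p$. In the inert case ($\varpi_{K_v} = p$), $\log(U_{K_v}^{(m)}) = p^m \calO_{K_v}$ scales plus and minus components uniformly, and a short calculation shows its image in the minus quotient $\cong \ZZ_p$ is $p^{m-1}\ZZ_p$, giving $f(\rho_v) = n - j + 1$. In the ramified case, $\varpi_{K_v}$ itself generates the minus eigenspace, with $\varpi_{K_v}^2 \in p\ZZ_p^\times$; the mismatched $p$-valuations of the plus and minus components of $\varpi_{K_v}^m \calO_{K_v}$ produce an image $p^{\lfloor m/2 \rfloor}\ZZ_p$, yielding $f(\rho_v) = 2(n-j)$. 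The main obstacle I anticipate is this final step in the ramified case: one must carefully track the interleaved filtration on plus and minus parts, and handle the possible $\mu_p$-torsion in $\log$ appearing when $p = 3$ and $K_v = \QQ_3(\sqrt{-3})$.
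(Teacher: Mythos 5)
Your proof of (i) and of the first assertion in (ii) coincides with the paper's; the paper condenses your two-step observation (``$\rho_v(-1)=1$ since $D_n$ is a $p$-group with $p$ odd, then $\rho_v(\varpi_{K_v})^2 = \rho_v(-1)^{\mp 1} = 1$'') into the single line $\rho_v(\varpi_{K_v})^4 = \rho_v(\varpi_{K_v})^{p^n} = 1$, but the idea is identical. For the conductor formula, both arguments rest on the same structural observation — decompose the filtration on $U^{(1)}_{K_v}$ into $\pm$-eigenspaces for $\Gal(K_v/\QQ_p)$ and track how fast the minus part drops — but you linearize with the $p$-adic logarithm, whereas the paper works directly with the multiplicative quotients $Q_k = U^{(1)}/U^{(k)}$ using explicit generators and the binomial theorem. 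Your route is cleaner when $\log$ is an isomorphism, i.e.\ away from $p = 3$ in the ramified case.

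The caveat you flag at the end is not a cosmetic worry: it is the one genuinely delicate point of the whole proposition, and your proposal does not resolve it. When $p=3$ and $K_v = \QQ_3(\sqrt{-3})$ (equivalently $\zeta_3 \in K_v$), the group $U^{(1)}_{K_v}$ has $p$-torsion $\mu_3$ sitting in the \emph{minus} eigenspace, and $\zeta_3 \in U^{(1)}\setminus U^{(2)}$. Since $D_\infty = \Gal(K^{\ac}_{\infty,v_\infty}/K_v)\cong\ZZ_3$ is torsion-free, $\mu_3$ lies in the kernel of $U^{(1)}\to D_n$ for every $n>j$; but $\mu_3$ surjects onto $Q_2 = Q_2^-$, so $U^{(2)}$ \emph{cannot} be contained in that kernel when $D_{j+1}\neq 1$, i.e.\ $f(\rho_v)\ge 3$ already at level $n=j+1$. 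Pushing this through (either via your log computation shifted by the $\mu_3$ factor, or by the ray-class computation for $K=\QQ(\zeta_3)$, where the minus quotient first appears at conductor $\frp_v^4$, not $\frp_v^2$) gives $f(\rho_v) = 2(n-j)+2$ in this subcase, not $2(n-j)$. The paper's own proof does not notice this: it passes from the abstract isomorphism $Q_{2(n-j)}^- \cong C_{p^{n-j}} \cong D_n$ to the conclusion $f(\rho_v) = 2(n-j)$, which silently assumes that $D_n$ is \emph{the} quotient $Q_{2(n-j)}^-$ of $U^{(1)}$, something that fails precisely when $\mu_p\subset K_v$. So you have in fact located a real gap shared by your proposal and the paper, and to be complete your argument would need to split off the $\zeta_3\in K_v$ subcase and adjust the answer by $+2$ there. (The downstream applications in the paper only use the parity and eventual stabilization of the root-number pattern, so this shift does not alter Theorem~\ref{thm:mainA}, but the conductor formula itself needs the correction.)
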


\begin{proof}
    Conclusion (i) follows from the fact that $K_{\infty}^{\ac}/K$ is unramified outside $p$. For (ii), we note that $\rho_v(\varpi_{K_v})^2 = 1$ and $\rho_v(\varpi_{K_v})^{p^n} = 1$, recalling that we have fixed the uniformizer $\varpi_{K_v} = \sqrt{-D}$. Since $p$ is an odd prime, this forces $\rho_v(\varpi_{K_v}) = 1$. Next, we compute the conductor of $\rho_v$. By \eqref{eq:identifyDn}, the main task is to study the structure of 
    \[
    Q_k := 1+\varpi_{K_v} \calO_{K_v}/1+\varpi_{K_v}^k \calO_{K_v}, \quad \text{for } k \geq 1.
    \]
    The extension $K_v/F_w$ is nontrivial of degree $2$, and hence $\sfc$ acts on $Q_k$ nontrivially by $\pm 1$, where $\sfc \in \Gal(K_v/F_w)$ is the nontrivial element, namely the complex conjugation. Therefore, $Q_{k}$ can be decomposed as
    \[
    Q_k \simeq Q_k^{+} \times Q_k^{-},
    \]
    where $Q_{k}^{+}$ (resp. $Q_{k}^{-}$) is the subgroup of $Q_{k}$ where $\sfc$ acts trivially (resp. by $-1$). Throughout the proof, we denote $C_n$ as the cyclic group of order $n$. We discuss the following two cases. 

    \textbf{Case 1: $p$ remains inert in $K$}. Note that in this case $\varpi_{K_v} = p $ and  one checks that every element of $Q_{k}$ is of order at most $p^{k-1}$, and
    \[
    (1+\varpi_{K_v})^{p^{k-1}} = 1 \in Q_{k}, \quad (1+\varpi_{K_v})^{p^{k-2}} \neq 1 \in Q_{k},
    \]
    by the binomial theorem. This shows that $1+\varpi_{K_v}$ generates a cyclic subgroup of order $p^{k-1}$ in $Q_{k}$. One counts that the cardinality of $Q_{k}$ is $p^{2(k-1)}$ since $v \mid w$ is inert.
    
    We start with $k=2$. Since $K_v/F_w$ is inert, one checks that $Q_2^{+}$ and $Q_{2}^{-}$ are both nonzero and share the same cardinality and they can be annihilated by $p$. In fact, one observes that
    \[
    1+pt \in Q_2^+, \quad 1+pt\sqrt{-D} \in Q_2^-, \, \text{ for } t\in \mathbb{Z}_p.
    \]
    Moreover, since $Q_2^+$ and $Q_2^-$ both have $p$ elements, $Q_2^+ \simeq Q_2^- \simeq C_p$.
    
    For $k>2$, since there is a surjective map $Q_k \twoheadrightarrow Q_{k-1}$, which maps the plus (resp. minus) part of $Q_{k}$ into the plus (resp. minus) part of $Q_{k-1}$, one shows by induction that
    \[
    Q_{k} \simeq Q_{{k}}^+ \times Q_{{k}}^-,
    \]
    with $Q_{k}^{+}$ and $Q_{k}^{-}$ both isomorphic to $C_{p^{k-1}}$ as abstract groups.
    
    If $n > j$, then $D_n$ is a group of order $p^{n-j}$, which can be regarded as a quotient group of $1+\varpi \calO_{K_v}$ by local class field theory. Now by the discussion above,
    \[
    D_n \simeq Q_{n-j+1}^- \simeq C_{p^{n-j}}.
    \]
    This implies that $n-j+1$ is the exponent of the conductor of $\rho_v$.

    \textbf{Case 2: $p$ ramifies in $K$}. The analysis is similar to \textbf{Case 1}, but more complicated. Note that in this case, $\varpi_{K_v} = \sqrt{-D}$, one checks similarly that every element of $Q_{2k+1}$ is of order at most $p^k$, and
    \[
    (1+\varpi_{K_v})^{p^{k}} = 1 \in Q_{2k+1}, \quad (1+\varpi_{K_v})^{p^{k-1}} \neq 1 \in Q_{2k+1}.
    \]
    Hence $1+\varpi_{K_v}$ generates a cyclic subgroup of order $p^k$ in $Q_{2k+1}$.
    
    Similarly, we start with $Q_2$ again. However, in this case, the residue extension of $K_{v}/F_{w}$ is trivial, and hence cardinality of $Q_2$ is $p$. Moreover, since elements in $Q_2$ are of the form $1 + x\sqrt{-D}$, it is immediate to see that $Q_2 = Q_{2}^{-} \simeq C_p$.
    
    Next we compute $Q_{3}$. One observes that
    \[
    1 - Dt \in Q_3^+, \quad 1-t\sqrt{-D} \in Q_3^-, \, \text{ for } t\in \ZZ_p,
    \]
    one can deduce that $Q_{3}^{+}$ and $Q_{3}^{-}$ are both isomorphic to $C_p$ as abstract groups. Hence 
    \[
    Q_3 \simeq Q_3^+ \times Q_3^- \simeq C_p \times C_p.
    \]
    Using surjective maps $Q_{k} \twoheadrightarrow Q_{k-1}$, we inductively show that the minus and plus parts increase alternately as $k$ increases. More precisely, for $k\ge 1$,
    \[
    Q_{2k+1} \simeq Q_{2k+1}^+ \times Q_{2k+1}^-
    \]
    with $Q_{2k+1}^+ \simeq C_{p^k}$, $Q_{2k+1}^-\simeq C_{p^k}$, and
    \[
     Q_{2k} \simeq Q_{2k}^{+} \times Q_{2k}^{-}
    \]
    with  $Q_{2k}^+ \simeq C_{p^{k-1}}$, $Q_{2k}^-\simeq C_{p^k}$.

    If $n > j$, then $D_n$ is a group of order $p^{n-j}$, which can be regarded as a quotient group of $1+\varpi \calO_{K_v}$ by local class field theory. Now by the discussion above, 
    \[
    D_{n} \simeq Q_{2(n-j)}^- \simeq C_{p^{n-j}}
    \]
    This implies that $2(n-j)$ is the exponent of the conductor of $\rho_v$.
\end{proof}
 
\begin{remark}
Readers may have noticed that we have not dealt with the case when $w$ splits in $K$. This is because computing local root numbers in this case is straightforward (see Proposition \ref{prop:splitcase}), so there is no need to invoke local class field theory.
\end{remark}

\section{Root number computations} \label{sec:rootnumbercompute}
By Theorem \ref{thm:RJ}, the vanishing order $\ord_{s=1} L(\varphi \rho, s) \in \{0,1\}$ for all but finitely many $\varphi \rho \in \frX_{\varphi}^{\ac}$, and hence it is completely determined by the root number of $\varphi \rho$. In this section, we compute the root number of such characters, which therefore governs the behavior of $L(\varphi \rho, s)$.

\subsection{Background on local root numbers}
\subsubsection{Local root numbers} \label{sec:localrt}
We recall the basic definitions and properties of local root numbers. Let $H$ be a local field of characteristic zero, and let $\chi$ be a \emph{unitary} Hecke character over $H$. Fix a nontrivial unitary additive character $\psi: H \rightarrow \CC^{\times}$. The \emph{local root number} $W(\chi, \psi)$ is defined by
\[
W(\chi, \psi) := \epsilon(\chi, \psi, \dif x)/ \abs{\epsilon(\chi, \psi, \dif x)},
\]
where $\dif x$ is any Haar measure on $H$, and the epsilon factor is defined as in \cite[(3.3.3)]{MR349635}.

Suppose $H$ is nonarchimedean. Let $\calO$ denote its ring of integers, $\varpi$ a uniformizer of $\calO$, and $q$ the order of the residue field $k = \calO/\varpi \calO$. We write:
\begin{itemize}
    \item $m(\psi)$ as the largest integer $\mu$ such that $\psi$ is trivial on $\varpi^{-\mu}\calO$, 
    \item $f(\chi)$ such that $\varpi^{f(\chi)} \calO$ is the conductor of $\chi$. Note that $f(\chi) \geq 0$, and $\chi$ is ramified if and only if $f(\chi) > 0$.
\end{itemize}

In practice, there is a canonical choice of the additive character $\psi$. Define $\psi^{\star}: H \rightarrow \CC^{\times}$ by
\[
\psi^{\star}(t) = \begin{cases}
\exp(-2 \pi i \tr_{H/\RR}(t)), &\text{archimedean } H, \\
\exp (2 \pi i \{\tr_{H/\QQ_{\ell}}(t)\}), &\text{nonarchimedean } H \ (\text{over } \QQ_{\ell}),
\end{cases}
\]
where $\{\cdot\}$ denotes the fractional part.

We summarize the results on the root number of $\chi$ in the following proposition.

\begin{proposition} \label{prop:localrt}
With the notations as above, suppose $\chi$ is of infinite type $(1,0)$.
\begin{enumerate}[\rm (i)]
    \item If $H$ is archimedean, then $W(\chi, \psi^{\star}) = -i$.
    \item If $H$ is nonarchimedean and $\chi$ is unramified, then $W(\chi, \psi) = \chi(\varpi)^{m(\psi)}$.
    \item If $H$ is nonarchimedean, then for any $\beta \in \varpi^{-f(\chi)-m(\psi)} \calO^{\times}$, we have
            \[
            W(\chi, \psi) = \dfrac{q^{f(\chi)/2}}{\Meas(\calO)} \chi(\beta)^{-1} \int_{U} \chi^{-1}(x) \psi(\beta x) \dif x.
            \]
    \end{enumerate}
\end{proposition}

\begin{proof}
Item (i) appears in \cite[Proposition 12]{MR658544}. Item (ii) appears in \cite[Proposition 11]{MR658544}. The integral formula in (iii) appears as \cite[(3.4.3.2), (5.7.2)]{MR349635} (see also \cite[(3)]{MR1133776}).
\end{proof}

\subsubsection{Relative root numbers}
Next, we recall the background on relative local root numbers. Let $F$ be a local field of characteristic zero, and let $K$ be an $F$-algebra of one of the following three types:
\begin{enumerate}[(a)]
    \item $K$ is a ramified quadratic extension of $F$,
    \item $K$ is an unramified quadratic extension of $F$,
    \item $K = F \oplus F$.
\end{enumerate}
We fix nontrivial unitary additive characters $\psi_{K}$ and $\psi_{F}$ of $K$ and $F$, respectively, such that
\[
\psi_{K} = \psi_{F} \circ \tr_{K/F},
\]
where $\tr_{K/F}$ is the trace of the $F$-algebra $K$.

Let $\chi$ be any unitary character of $K^{\times}$ such that $\chi|_{F^{\times}} = \kappa$, where $\kappa$ is the quadratic character of $F^{\times}$ associated with the extension $K/F$. We clarify the situation for type (c). In type (c), the character $\kappa$ is trivial. Writing $\chi = \chi_1 \oplus \chi_2$ on $F^{\times} \oplus F^{\times}$, we have $\chi_1 = \chi_2^{-1}$. The local root number is then defined as
\[
W(\chi, \psi_K) := W(\chi_1, \psi_F) W(\chi_2, \psi_F).
\]
For types (a) and (b), the local root number $W(\chi, \psi_K)$ is as defined in Section \ref{sec:localrt}.

We now define the \emph{relative local root number} $W(\kappa, \chi)$ by
\[
W(\kappa, \chi) := W(\chi, \psi_K)/W(\kappa, \psi_F).
\]
It can be verified that this definition is independent of the choice of $\psi_F$.

Relative local root numbers can be used to compute global root numbers. To illustrate this, suppose $F$ is a number field, $K$ is a quadratic extension of $F$, and $\kappa$ is the quadratic Hecke character of $F$ associated with the extension $K/F$. Let $\chi$ be any unitary Hecke character of $K$ that coincides with $\kappa$ on $F$. By Proposition \ref{prop:anticychar}, this is equivalent to $\chi$ being an anticyclotomic character. Let $w$ be a place of $F$. Let $F_w$ denote the completion of $F$ at $w$, and then $K_w := K \otimes_{F} F_w$ is an $F_w$-algebra of type (a), (b), or (c) as described above, corresponding to whether $w$ ramifies, remains inert, or splits in $K$. We then have the following result:

\begin{proposition}[{\cite[Proposition 6]{MR658544}}] \label{prop:localglobal}
The global root number $W(\chi)$ can be expressed as
\[
W(\chi) = \prod_{w \in \scV_{F}} W(\kappa_w, \chi_w).
\]
\end{proposition}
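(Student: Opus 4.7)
The plan is to derive the stated formula from the standard factorization of global root numbers into products of local root numbers (Tate's thesis), combined with the vanishing of the root number of the quadratic character $\kappa$ associated with a quadratic extension of number fields.

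First, I would fix a nontrivial unitary additive character $\psi_F : \AA_F / F \to \CC^{\times}$ and set $\psi_K := \psi_F \circ \tr_{K/F}$, which is a nontrivial unitary additive character of $\AA_K/K$. By Tate's local-global analysis, any unitary Hecke character $\chi$ of $K$ satisfies
\[
W(\chi) = \prod_{w \in \scV_K} W(\chi_w, \psi_{K,w}),
\]
where all but finitely many local factors are equal to $1$. I would then regroup this product according to the underlying place $v$ of $F$. Under the $F_v$-algebra decomposition $K \otimes_F F_v \simeq \prod_{w \mid v} K_w$, the additive character $\psi_{K,v}$ matches $\prod_{w \mid v} \psi_{K,w}$ under $\tr_{K_v/F_v}$, so the local $\epsilon$-factor on $K_v$ factors accordingly. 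In the inert or ramified cases there is a single $w$ above $v$ and nothing to check; in the split case $K_v \simeq F_v \oplus F_v$, the identification $\chi_v = \chi_{w_1} \oplus \chi_{w_2}$ with $\chi_{w_1} = \chi_{w_2}^{-1}$ on $F_v^\times$ (which follows from $\chi|_{\AA_F^\times} = \kappa$ and $\kappa_v = 1$) makes the product $W(\chi_{w_1}, \psi_{F,v}) W(\chi_{w_2}, \psi_{F,v})$ coincide with the definition of $W(\chi_v, \psi_{K,v})$ given in the excerpt. Hence
\[
W(\chi) = \prod_{v \in \scV_F} W(\chi_v, \psi_{K,v}).
\]

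Next, I would apply the same global factorization to the quadratic character $\kappa$ of $F$, obtaining $W(\kappa) = \prod_{v \in \scV_F} W(\kappa_v, \psi_{F,v})$, and invoke the classical fact that $W(\kappa) = 1$ for the quadratic Hecke character attached to a quadratic extension of number fields. This follows, for instance, from comparing the functional equations of the Dedekind zeta functions $\zeta_K(s)$ and $\zeta_F(s)$ via $L(s,\kappa) = \zeta_K(s)/\zeta_F(s)$, so no further computation is required. Dividing, one obtains
\[
W(\chi) = \frac{W(\chi)}{W(\kappa)} = \prod_{v \in \scV_F} \frac{W(\chi_v, \psi_{K,v})}{W(\kappa_v, \psi_{F,v})} = \prod_{v \in \scV_F} W(\kappa_v, \chi_v),
\]
which is the claim.

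The only real subtlety is verifying that the local factorization of $\epsilon$-factors survives passage to root numbers at split places, i.e., that the \emph{definition} of $W(\chi_v, \psi_{K,v})$ for type (c) is compatible with the global product; once that is checked (and it is straightforward from $\psi_{K,v}(a,b) = \psi_{F,v}(a)\psi_{F,v}(b)$ and $\Meas(\calO_{K_v}) = \Meas(\calO_{F_v})^2$ under the product Haar measure), the rest is formal. I would expect this bookkeeping at split places to be the main point where some care is needed; the inert and ramified places present no difficulty because $K_v/F_v$ is a genuine field extension and $W(\chi_v, \psi_{K,v})$ is defined exactly as in Section \ref{sec:localrt}.
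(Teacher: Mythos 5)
The paper does not supply its own proof of this proposition; it is quoted verbatim from Rohrlich's \cite[Proposition 6]{MR658544}. Your proposed derivation is correct and is exactly the standard route one would take, and almost certainly coincides with Rohrlich's original argument: Tate's factorization $W(\chi)=\prod_{w\in\scV_K} W(\chi_w,\psi_{K,w})$ with the compatible global additive characters $\psi_K=\psi_F\circ\tr_{K/F}$, regrouping by the place $v$ of $F$ below $w$ (with the bookkeeping at split $v$ matching the paper's type-(c) definition $W(\chi_v,\psi_{K,v})=W(\chi_{w_1},\psi_{F,v})W(\chi_{w_2},\psi_{F,v})$), and dividing by $W(\kappa)=\prod_v W(\kappa_v,\psi_{F,v})=1$. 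The fact $W(\kappa)=1$ via $\Lambda_K=\Lambda_F\cdot\Lambda(\cdot,\kappa)$ is classical and your justification is the right one. You correctly flag the split-place compatibility as the only point needing care; the remaining point worth saying explicitly (which the paper does note right after the statement) is that all but finitely many factors equal $1$, so both products are well-defined, and this follows from the unramified case $W(\kappa_v,\chi_v)=1$. With that remark included, your proof is complete and matches the intended argument.
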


Using Proposition \ref{prop:localrt}(ii)(a) (see \cite[Proposition 11]{MR658544}), it is easy to see that $W(\kappa_w, \chi_w) = 1$ if $\chi_w$ is unramified. Thus, the product is well-defined.

Therefore, the problem is reduced to computing the relative local root numbers.

\subsection{Local computations I: relative local root numbers}
The computation of relative local root numbers depends on both the decomposition type of places $w$ of $F$ in $K$ and whether $\chi$ is unramified at $w$. The various cases are summarized in the following table.

\begin{table}[H]
\centering
\begin{tabular}{|cc|c|c|}
\hline
\multicolumn{1}{|l|}{}                                         & \multicolumn{1}{l|}{}    & \multicolumn{1}{c|}{$\chi$ unramified at $w$} & \multicolumn{1}{c|}{$\chi$ ramified at $w$} \\ \hline
\multicolumn{1}{|c|}{}                                         & $w$ unramified in $K$    & (A)                           & (E)                                         \\ \cline{2-4} 
\multicolumn{1}{|c|}{\multirow{-2}{*}{Odd finite places $w$}}  & $w$ ramified in $K$      & (B)                      & (C)                         \\ \hline
\multicolumn{1}{|c|}{}                                         & $w$ unramified in $K$    & (A)                           & (F)                                         \\ \cline{2-4} 
\multicolumn{1}{|c|}{\multirow{-2}{*}{Even finite places $w$}} & $w$ ramified in $K$      & (B)                      & (G)                                         \\ \hline
\multicolumn{2}{|c|}{Archimedean places $w$}                                             & \multicolumn{2}{|c|}{$\chi$ of infinite type $(1,0)$: (D)}          \\ \hline
\end{tabular}
\caption{Cases for relative local root numbers}
\label{table}
\end{table}

In the rest of this subsection, let $K/F$ be a quadratic extension of local fields of characteristic $0$ with residue characteristic $p$, or let $K = F \oplus F$. Let $\chi$ be a unitary character $\chi: K^\times \to \CC^\times$ such that $\chi|_{F^\times} = \kappa$, where $\kappa$ is the quadratic character of $F^\times$ associated with the quadratic extension $K/F$.

Most entries in Table \ref{table} have been computed in the literature, primarily based on Proposition \ref{prop:localrt}. The results are summarized as follows:
\begin{itemize}
    \item \textbf{Box (A)}: This case is computed in \cite[Proposition 11]{MR658544} as $W(\kappa, \chi) = 1$.
    \item \textbf{Box (B)}: This case is excluded by Proposition \ref{prop:anticychar}.
    \item \textbf{Box (C)}: This case is computed in \cite[Proposition 2]{MR1133776}:
    \begin{equation}\label{equation:ramified}
    W(\kappa, \chi) = 
    \begin{cases}
    \dleg{2}{p}, &\quad \text{if } f(\chi) = 1, \\
    \dleg{-2l_{\chi}}{p} \chi(\varpi_K^{f-1}) i^\delta, &\quad \text{if } f(\chi) > 1,
    \end{cases}
    \end{equation}
    where $l_{\chi}$ is an invertible residue class modulo $p$ such that $\chi(1 + \varpi_K^{f-1}) = \exp(2\pi i l_{\chi}/p)$, and 
    \[
    \delta = 
    \begin{cases}
    0, &\quad \text{if } p \equiv 1 \pmod{4}, \\
    1, &\quad \text{if } p \equiv 3 \pmod{4}.
    \end{cases}
    \]
    \item \textbf{Box (D)}: This case is computed in \cite[Proposition 12]{MR658544} as $W(\kappa, \chi) = 1$.
\end{itemize}

In this section, we compute the missing box (E). Since boxes (F) and (G) are not needed for our purposes, we will not address these cases in this article, although they are of independent interest.

For box (E), we further divide it into two subcases:
\begin{itemize}
    \item \textbf{Case (E:spl)}: Here, $w$ splits in $K$, corresponding to type (c) introduced earlier in this section.
    \item \textbf{Case (E:inert)}: Here, $w$ remains inert in $K$, corresponding to type (b) introduced earlier in this section.
\end{itemize}

In the split case, we have the following result.
\begin{proposition}\label{prop:splitcase}
In the case (E:spl), $W(\kappa, \chi) = \chi_2(-1)$.
\end{proposition}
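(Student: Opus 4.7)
The plan is to reduce the claim to the standard local functional equation for Tate epsilon factors. In the split case $K = F \oplus F$, the quadratic character $\kappa$ attached to the étale $F$-algebra $K$ is trivial, so by Proposition \ref{prop:localrt}(ii)(a) we have $W(\kappa, \psi_F) = W(1, \psi_F) = 1^{m(\psi_F)} = 1$. Writing $\chi = \chi_1 \oplus \chi_2$ as in the definition preceding Proposition \ref{prop:localglobal}, the compatibility condition $\chi|_{F^\times} = \kappa$ (with $F$ sitting diagonally in $F \oplus F$) gives $\chi_1 \chi_2 = 1$, i.e.\ $\chi_2 = \chi_1^{-1}$. Hence
\[
W(\kappa, \chi) \;=\; \frac{W(\chi, \psi_K)}{W(\kappa, \psi_F)} \;=\; W(\chi_1, \psi_F)\, W(\chi_1^{-1}, \psi_F).
\]

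The key step is then the classical identity
\[
W(\chi_1, \psi_F)\, W(\chi_1^{-1}, \psi_F) \;=\; \chi_1(-1),
\]
valid for any unitary character $\chi_1$ of $F^\times$. This is a direct consequence of Tate's local functional equation $\epsilon(s,\chi_1,\psi_F)\,\epsilon(1-s,\chi_1^{-1},\psi_F) = \chi_1(-1)$ evaluated at $s = 1/2$; after taking absolute values it normalizes correctly to the root-number version. Combined with the trivial observation $\chi_1(-1)^2 = \chi_1(1) = 1$, so that $\chi_1(-1) = \chi_1^{-1}(-1) = \chi_2(-1)$, the proposition follows immediately.

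The only step that requires care is matching normalizations: one must confirm that the definition of $W(\chi,\psi) = \epsilon(\chi,\psi,\dif x)/|\epsilon(\chi,\psi,\dif x)|$ used in Section \ref{sec:localrt} (and in the referenced formula \cite[(3.3.3)]{MR349635}) really yields the identity $W(\chi_1)W(\chi_1^{-1}) = \chi_1(-1)$ on the nose. If one wishes to avoid quoting the functional equation, an alternative is a direct calculation via Proposition \ref{prop:localrt}(ii)(b): choose $\beta \in \varpi^{-f(\chi_1)-m(\psi_F)} \calO^\times$ and write out the product of Gauss-sum-type integrals for $W(\chi_1,\psi_F)$ and $W(\chi_1^{-1},\psi_F)$; the change of variables $x \mapsto -x y$ in one of the two integrals, together with orthogonality of characters on $\calO^\times/(1 + \varpi^{f(\chi_1)}\calO)$, collapses the double integral and produces exactly $\chi_1(-1)$. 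Either way, the argument is short, and the split case is by far the mildest of the Boxes (E)--(G) left unaddressed in the literature.
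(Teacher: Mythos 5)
Your proof is correct and follows essentially the same route as the paper: both reduce $W(\kappa,\chi)$ to $W(\chi_1,\psi_F)\,W(\chi_1^{-1},\psi_F)$ using triviality of $\kappa$ and $\chi_2=\chi_1^{-1}$, and then apply the identity $W(\chi_1,\psi_F)\,W(\chi_1^{-1},\psi_F)=\chi_1(-1)$. The paper establishes this identity by taking the complex conjugate of the explicit Gauss-sum formula in Proposition \ref{prop:localrt}(ii)(b), showing $\overline{W(\chi_1,\psi_F^{\star})}=\chi_2(-1)^{-1}W(\chi_2,\psi_F^{\star})$ and using $|W(\chi_1,\psi_F^{\star})|=1$---precisely the normalization-free direct calculation you sketch in your final paragraph as an alternative, rather than quoting the Tate functional equation.
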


\begin{proof}
By definition, $W(\chi, \psi_K) = W(\chi_1, \psi_F) W(\chi_2, \psi_F)$ with $\chi_1 = \chi_2^{-1}$. The proposition then follows from a combination of \cite[(2.66)]{MR2882696} and \cite[(2.82)]{MR2882696}.
\end{proof}

Now consider the inert case, i.e., the case (E:inert). Before presenting the formula, we introduce some additional notations. 

Recall that we have assumed $K = F(\sqrt{-D})$ for some $-D \in F$. Let $\varpi \in F$ be the uniformizer of $F$ so determined, which is also a uniformizer of $K$ since the extension $K/F$ is inert. Denote by $q_K$ and $q_F$ the cardinalities of the residue fields of $K$ and $F$, respectively. We follow the same conventions as in \cite[Section 5]{MR658544}. Let $\dif_F x$ and $\dif_K y$ denote fixed Haar measures on $F$ and $K$, respectively. The restrictions of these measures to $U_F$ and $U_K$ are Haar measures on the respective groups. Define $V = U_K / U_F$, and write $\dif z$ for the quotient measure $\dif_K y / \dif_F x$ on $V$. For a continuous function $f: U_K \to \CC$, we have the relation:
\begin{equation} \label{eq:quointegral}
\int_V \left( \int_{U_F} f(xz) \dif_F x \right) \dif z = \int_{U_K} f(y) \dif_K y.
\end{equation}

For $z \in K$, define
\[
\lambda(z) = 
\begin{cases}
1, & \text{if } \tr_{K/F}(z) \in U_F, \\
0, & \text{otherwise}.
\end{cases}
\]

\begin{proposition}\label{prop:inertcase}
In the case (E:inert), we have
\[
W(\kappa, \chi) = (-1)^f q_K^{f/2} \frac{\Meas(\calO_F)}{\Meas(\calO_K)} \chi^{-1}(\sqrt{-D}) \int_V \lambda\left(\frac{z}{\varpi^f}\right) \dif z,
\]
where $f := f(\chi) \geq 1$. In particular, $W(\kappa, \chi)$ has the same sign as $(-1)^f \chi^{-1}(\sqrt{-D})$.
\end{proposition}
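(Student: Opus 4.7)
The plan is to directly evaluate the Gauss-sum integral from Proposition~\ref{prop:localrt}(ii)(b) and exploit the $F/K$-structure specific to the inert case. First, normalize $\psi_F$ so that $m(\psi_F) = 0$; since $K/F$ is unramified, the different is trivial and hence $m(\psi_K) = 0$ as well. The character $\kappa$ is the unramified quadratic character with $\kappa(\varpi) = -1$, so Proposition~\ref{prop:localrt}(ii)(a) gives $W(\kappa, \psi_F) = 1$ and hence $W(\kappa, \chi) = W(\chi, \psi_K)$. Choosing $\beta = 1/\varpi^f \in \varpi^{-f} U_K$, the identity $\chi|_{F^\times} = \kappa$ yields $\chi^{-1}(\beta) = (-1)^f$, so Proposition~\ref{prop:localrt}(ii)(b) reduces the problem to evaluating
\begin{equation*}
\int_{U_K} \chi^{-1}(y)\, \psi_F\bigl(\tr_{K/F}(y)/\varpi^f\bigr) \dif_K y.
\end{equation*}

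Next, I would decompose this via the quotient measure relation~\eqref{eq:quointegral} along $1 \to U_F \to U_K \to V \to 1$. The crucial simplifying feature of the inert case is that $\chi|_{U_F} = \kappa|_{U_F} = 1$, so together with the $F$-linearity of $\tr_{K/F}$ the iterated integral collapses to $\int_V \chi^{-1}(z)\, I(\tr(z)/\varpi^f)\, \dif z$, where the inner integral $I(a) := \int_{U_F} \psi_F(ta)\, \dif_F t$ equals $\Meas(U_F)$ for $v_F(a) \geq 0$, equals $-q_F^{-1}\Meas(\calO_F)$ for $v_F(a) = -1$, and vanishes otherwise. Writing a coset representative $z = a_z + b_z\sqrt{-D}$ one has $\tr(z) = 2a_z$, so (using that $p$ is odd) the support reduces to $v_F(a_z) \geq f - 1$, and the sub-support $v_F(a_z) = f$ is exactly the locus where $\lambda(z/\varpi^f) = 1$.

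The main obstacle, and the crux of the argument, is the analysis of $\chi^{-1}(z)$ on each relevant range. The key trick is the factorization $a_z + \sqrt{-D} = \sqrt{-D}\bigl(1 - a_z\sqrt{-D}/D\bigr)$, which isolates the constant $\chi^{-1}(\sqrt{-D})$ and leaves a second factor lying in a filtration subgroup of $U_K$. For $v_F(a_z) \geq f$ this second factor is in the conductor subgroup $1 + \varpi^f \calO_K$ of $\chi$, so $\chi^{-1}(z) = \chi^{-1}(\sqrt{-D})$ is constant and produces the main term proportional to $\chi^{-1}(\sqrt{-D}) \int_V \lambda(z/\varpi^f) \dif z$. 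For $v_F(a_z) = f - 1$ the second factor descends to a character of the quotient $(1 + \varpi^{f-1}\calO_K)/(1 + \varpi^f \calO_K) \cong \mathbb{F}_{q_K}$; because $\chi$ is trivial on $1 + \varpi^{f-1}\calO_F \subset U_F$, this induced character vanishes on the $\mathbb{F}_{q_F}$-subspace and therefore restricts to a nontrivial additive character of the complementary $\sqrt{-D}$-direction, whose partial Gauss sum equals $-1$. This combines with the $-q_F^{-1}\Meas(\calO_F)$ factor coming from $I$ and the overall prefactor $(-1)^f q_K^{f/2}/\Meas(\calO_K)$, together with the identity $q_K = q_F^2$, to yield the formula. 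The sign claim follows at once: squaring gives $\chi^{-1}(\sqrt{-D})^2 = \chi^{-1}(-D) = \kappa(-D) = 1$ (as $-D$ is a non-square unit and $\kappa$ is unramified), so $\chi^{-1}(\sqrt{-D}) \in \{\pm 1\}$, while the Haar integral $\int_V \lambda(z/\varpi^f) \dif z$ is manifestly nonnegative.
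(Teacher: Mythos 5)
Your plan shares the paper's skeleton (Gauss-sum formula from Proposition~\ref{prop:localrt}(ii) for $W(\chi,\psi_K)$, fibration over $V=U_K/U_F$ via \eqref{eq:quointegral}, and the trick that $\chi|_{U_F}=1$ plus $F$-linearity of $\tr_{K/F}$ collapse the inner integral), but it is a genuinely more careful execution of the key step, and it fills in work the paper elides. The paper rewrites the inner integral over $U_F$ as $\Meas(\calO_F)\kappa(1/\varpi^m)W(\kappa,\psi_F)$ times the $\lambda$-indicator by invoking Proposition~\ref{prop:localrt}(ii) for $\kappa$; but that Gauss-sum formula, and the accompanying vanishing claim for $\beta\notin\varpi^{-f-m}U$, are only valid for ramified characters, whereas $\kappa$ is unramified here. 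You instead normalize $m(\psi_F)=0$, compute the additive integral $I(a)=\int_{U_F}\psi_F(ta)\,\dif_F t$ directly, and correctly find $I=\Meas(U_F)$ for $v_F(a)\geq 0$, $I=-q_F^{-1}\Meas(\calO_F)$ for $v_F(a)=-1$, and $I=0$ otherwise, so the real support is $v_F(a_z)\geq f-1$, strictly larger than the $\lambda$-support $\{v_F(a_z)=f\}$. You also supply the secondary Gauss-sum analysis on $v_F(a_z)=f-1$ --- where triviality of $\chi$ on $1+\varpi^{f-1}\calO_F\subset U_F$ forces the residual additive character of $\mathbb{F}_{q_K}/\mathbb{F}_{q_F}$ to be nontrivial, giving the sum $-1$ --- and that step is entirely absent from the paper's argument. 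So this is a different, more complete route to the same target.

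Two points to tighten. (i) Saying that $v_F(a_z)\geq f$ ``produces the main term proportional to $\chi^{-1}(\sqrt{-D})\int_V\lambda(z/\varpi^f)\,\dif z$'' is inaccurate as a description of that single piece: the quotient measure of $\{v_F(a_z)\geq f\}$ in $V$ equals $q_F^{-f}\Meas(\calO_F)$, strictly larger than $\int_V\lambda\,\dif z=q_F^{-f}\Meas(U_F)$. Once you actually add the $v_F(a_z)=f-1$ piece the two contributions combine to $q_F^{-f}\Meas(\calO_K)\chi^{-1}(\sqrt{-D})$ and the prefactors cancel cleanly, giving the unimodular value $W(\kappa,\chi)=(-1)^f\chi^{-1}(\sqrt{-D})$; note, however, that the formula as displayed in the Proposition evaluates to $(-1)^f(1-q_F^{-1})\chi^{-1}(\sqrt{-D})$, which has magnitude $1-q_F^{-1}<1$. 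The constant in the displayed formula thus appears to be off, precisely because the paper applies the ramified Gauss-sum identity to the unramified $\kappa$ and drops the ranges $v_F(a_z)>f$ and $v_F(a_z)=f-1$; only the ``In particular'' sign statement (which is all Lemma~\ref{lemma:pinert} uses) is correct, and your computation, carried out to the end, establishes that sign. (ii) For $f=1$ the identification $(1+\varpi^{f-1}\calO_K)/(1+\varpi^{f}\calO_K)\cong\mathbb{F}_{q_K}$ degenerates (the residual character of $U_K/(1+\varpi\calO_K)\cong\mathbb{F}_{q_K}^\times$ is multiplicative), and the Type-2 representatives $1+c\sqrt{-D}$ with $c\in\varpi\calO_F$ enter the $v_F(\tr z)=0$ stratum; that case deserves a separate short sentence.
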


\begin{proof}
This proposition extends Proposition \ref{prop:localrt} and parallels the ramified case, as detailed in \cite[Proposition 1]{MR1133776}. Fix a nontrivial unitary character \(\psi_F: F \to \mathbb{C}^\times\), and let \(\psi_K = \psi_F \circ \tr_{K/F}\). Denote \(m = m(\psi_F)\). Since \(K/F\) is inert, we have \(m(\psi_K) = m\).  

Using Proposition \ref{prop:localrt}(iii) for \(\chi\), we obtain 
\begin{equation}\label{equation:inert1}
\int_{U_K} \chi^{-1}(y) \psi_K\left(\frac{y}{\varpi^{m+f}}\right) \dif_K y = q_K^{-f/2} \Meas(\calO_K) \chi\left(\frac{1}{\varpi^{f+m}}\right) W(\chi, \psi_K).
\end{equation}  
Next, by applying \eqref{eq:quointegral}, the left-hand side can be integrated over \(U_F\) and \(V\)
\[
\int_{U_K} \chi^{-1}(y) \psi_K\left(\frac{y}{\varpi^{m+f}}\right) \dif_K y = \int_{V} \chi^{-1}(z) \left( \int_{U_F} \chi^{-1}(x) \psi_F\left(\tr_{K/F} \left(\frac{zx}{\varpi^{m+f}}\right)\right) \dif_F x \right) \dif z.
\]
Since \(\chi|_{U_F} = \kappa\), \(\kappa^{-1} = \kappa\), and \(\tr_{K/F}\) is \(F\)-linear, this reduces to
\[
\int_{U_K} \chi^{-1}(y) \psi_K\left(\frac{y}{\varpi^{m+f}}\right) \dif_K y = \int_{V} \chi^{-1}(z) \left( \int_{U_F} \kappa(x) \psi_F\left(\tr_{K/F} \left(\frac{z}{\varpi^f}\right) \frac{x}{\varpi^m}\right) \dif_F x \right) \dif z.
\]
Applying Proposition \ref{prop:localrt}(iii) to \(\kappa\), we have
\[
\int_{U_F} \kappa(x) \psi_F\left(\frac{x}{\varpi^m}\right) \dif_F x = \Meas(\calO_F) \kappa\left(\frac{1}{\varpi^m}\right) W(\kappa, \psi_F).
\]
Using the \(\lambda\)-function and substituting \(z' = \tr_{K/F}\left(\frac{z}{\varpi^f}\right)\), this yields
\begin{equation}\label{equation:inert2}
\int_{U_K} \chi^{-1}(y) \psi_K\left(\frac{y}{\varpi^{m+f}}\right) \dif_K y = \Meas(\calO_F) \kappa\left(\frac{1}{\varpi^m}\right) \int_{V} \chi^{-1}(z)  \kappa(z') \lambda\left(\frac{z}{\varpi^f}\right)  \dif z \cdot W(\kappa, \psi_F).
\end{equation}
We note that when \(z' = \tr_{K/F}(\frac{z}{\varpi^f}) \in U_F\), we have \(\kappa(\tr_{K/F}(\frac{z}{\varpi^f})) = 1\), as \(\kappa\) is unramified. Moreover, \(\kappa(\varpi) = \chi(\varpi) = -1\). Comparing both sides of \eqref{equation:inert1} and \eqref{equation:inert2}, these observations allow us to conclude that
\[
W(\kappa, \chi) = (-1)^f q_K^{f/2} \frac{\Meas \calO_F}{\Meas \calO_K} \int_{V} \chi^{-1}(z) \lambda\left(\frac{z}{\varpi^f}\right) \dif z.
\]

Finally, it suffices to consider the remaining integral
\[
\int_{V} \chi^{-1}(z) \lambda\left(\frac{z}{\varpi^f}\right) \dif z.
\]
Let \(z = a + b \sqrt{-D} \in U_{K} \). One checks that the following holds:
\begin{align*}
\tr_{K/F}\left(\frac{z}{\varpi^f}\right) \in U_F 
&\Longleftrightarrow \tr_{K/F}(z) \in \varpi^f U_F \\
&\Longleftrightarrow a \in \varpi^f U_F, \, b \in U_F \\
&\Longleftrightarrow z \in \varpi^f U_F + U_F \sqrt{-D}.    
\end{align*}
Moreover, there exists a bijection
\[
\frac{\varpi^f U_F + U_F \sqrt{-D}}{U_F} \xleftrightarrow{1:1} \{\sqrt{-D} + x \varpi^f \mid x \in U_F\}.
\]
Note that for $z=\sqrt{-D} + x \varpi^f$, we have 
\[\chi^{-1}(z)=\chi^{-1}(\sqrt{-D} + x \varpi^f)=\chi^{-1}(\sqrt{-D})\chi^{-1}  \left(1 + \dfrac{x}{\sqrt{-D}}\varpi^f\right)=\chi^{-1}(\sqrt{-D}).\] 
For the last equality above, we first note that $\sqrt{-D} \in U_{K}$ since $p$ remains inert in $K$, and therefore by the definition of the conductor of $\chi$, we see 
\[
\chi^{-1}(1 + (x/\sqrt{-D}) \varpi^f ) = 1.
\]
Therefore, we have
\begin{align*}
\int_{V} \chi^{-1}(z) \lambda\left(\dfrac{z}{\varpi^f}\right) \dif z &= \int_{V} \chi^{-1}(\sqrt{-D} ) \lambda\left(\dfrac{z}{\varpi^f}\right) \dif z \\
&= \chi^{-1}(\sqrt{-D}) \int_{V} \lambda\left(\frac{z}{\varpi^f}\right) \dif z.    
\end{align*}
The proposition is then proved. 
\end{proof}

\begin{remark}
The main improvement in our computation compared to \cite{MR658544, MR1133776} is that we consider the case where \(\chi\) is ramified at an unramified place \(w\) of \(K\) above \(\mathbb{Q}\), i.e., the box (E). This case is excluded by assumption (iii) on \cite[page 518]{MR658544}. In \textit{loc. cit.}, the relative root number in the inert case is therefore straightforward, namely \(W(\kappa, \chi) = 1\).
\end{remark}

\subsection{Local computations II: the twist of local root numbers}
In this subsection, let \(F = \mathbb{Q}\) and \(K = \mathbb{Q}(\sqrt{-D})\) be an imaginary quadratic field with \(\kappa\) the quadratic character associated with \(K/\QQ\).

\emph{We remark that from this point onward, we are changing notation by letting $v$ be a place of $K$, not of $F$}.

Let \(\chi = \varphi \rho \in \frX_{\varphi}^{\ac}\). In this section, we relate the root numbers \(W(\varphi \rho)\) and \(W(\varphi)\) by computing the quotient
\[
R(\varphi, \rho) := \frac{W(\varphi \rho)}{W(\varphi)} = 
\prod_{\ell \in \scV_{\mathbb{Q}}} \frac{W(\kappa_{\ell}, (\varphi \rho)_{\ell})}{W(\kappa_{\ell}, \varphi_{\ell})} = \prod_{v \in \scV_{K}} \frac{W((\varphi \rho)_{v}, \psi_K)}{W(\varphi_{v}, \psi_K)}.
\]
Here, the second equality follows from Proposition \ref{prop:localglobal}, and the third equality follows from the definition of relative local root numbers. There are several cases of local quotients.

\begin{table}[H]
\centering
\begin{tabular}{|cc|c|c|}
\hline
\multicolumn{2}{|l|}{}                                                              & \(\varphi_v\) is unramified & \(\varphi_v\) is ramified \\ \hline
\multicolumn{2}{|c|}{Archimedean places \(v\)}                   & \multicolumn{2}{|c|}{Infinite type (1,0): (I)} \\ \hline
\multicolumn{2}{|c|}{Finite places \(v\), \(v \nmid p\)}           & (II)                       & (III)                    \\ \hline
\multicolumn{1}{|c|}{}                                    & \(p\) splits in \(K\): \(p\calO_K = v \barv\) & \multicolumn{2}{|c|}{(IV)}                 \\ \cline{2-4} 
\multicolumn{1}{|c|}{}                                    & \(p\) remains inert in \(K\): \(p\calO_K = (p)\) & \multicolumn{2}{|c|}{(V)}               \\ \cline{2-4} 
\multicolumn{1}{|c|}{\multirow{-3}{*}{Places \(v \mid p\)}} & \(p\) ramifies in \(K\): \(p\calO_K = v^2\)     & \multicolumn{2}{|c|}{(VI): see Table \ref{tab:my-table}}                 \\ \hline
\end{tabular}
\caption{The cases of the twist of local root numbers}
\label{table2}
\end{table}

Here, the cases where $v \nmid p$ corresponds to the places where \(\rho\) is unramified, and the cases where $v \mid p$ correspond to those where \(\rho\) is ramified. Before computing \(R(\varphi, \rho)\), note that \(W(\varphi \rho)\) and \(W(\varphi)\) are \(1\) or \(-1\), hence \(R(\varphi, \rho) \in \{\pm 1\}\).

\subsubsection{Local relative root number outside $p$}
In this subsection, we prove the following proposition.
\begin{proposition}\label{prop:reducetop}
    Notations being as above, we have 
    \[
    R(\varphi, \rho) = \dfrac{W(\kappa_p, (\varphi \rho)_p)}{W(\kappa_p, \varphi_p)}.
    \]
    As a result, the quotient is determined by the relative root numbers at \(p\).
\end{proposition}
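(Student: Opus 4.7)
The plan is to use Proposition \ref{prop:localglobal} to decompose the global quotient as
\[
R(\varphi, \rho) = \prod_{\ell \in \scV_{\QQ}} \frac{W(\kappa_\ell, (\varphi\rho)_\ell)}{W(\kappa_\ell, \varphi_\ell)},
\]
where I fix any nontrivial unitary additive character $\psi_\QQ$ of $\QQ$ and set $\psi_K = \psi_\QQ \circ \tr_{K/\QQ}$. The proposition then reduces to showing that every factor indexed by $\ell \neq p$ is trivial.

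The first input I would establish is that $\rho|_{\AA_\QQ^\times} = 1$. Since $\rho \circ \sfc = \bar\rho = \rho^{-1}$ (as recorded at the end of Section \ref{sec:galoischar}), evaluating at $x \in \AA_\QQ^\times$, which is fixed by $\sfc$, gives $\rho(x)^2 = 1$. As $\rho$ factors through a finite quotient of $\Gal(K_\infty^{\ac}/K) \cong \ZZ_p$, its values lie in $p$-power roots of unity, and $p$ odd forces $\rho|_{\AA_\QQ^\times} = 1$. I would also record that $\rho_\infty = 1$ because the local Artin map at the complex place of $K$ is trivial, so the archimedean local factor is automatically $1$.

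For a finite place $\ell \neq p$, Proposition \ref{prop:localrho}(i) gives that $\rho_v$ is unramified at every $v \mid \ell$. A direct computation from Proposition \ref{prop:localrt}(ii) shows that twisting a character $\alpha$ of $K_v^\times$ by an unramified character $\beta$ multiplies its local root number by $\beta(\varpi_{K_v})^{f(\alpha) + m(\psi_{K_v})}$, so the local factor at $\ell$ is expressed entirely in terms of values of $\rho_v$ on uniformizers. I would then split into three subcases by the decomposition type of $\ell$ in $K$. When $\ell$ is inert, $\varpi_v = \ell$ lies in the image of $\AA_\QQ^\times \hookrightarrow \AA_K^\times$, so $\rho_v(\varpi_v) = 1$ by the previous step and the factor collapses. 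When $\ell$ is ramified, $\varpi_v = \sqrt{-D}$ satisfies $\varpi_v^2 = -D \in \QQ^\times$, hence $\rho_v(\varpi_v)^2 = 1$, and the $p$-primary nature of $\rho_v$ with $p$ odd again forces $\rho_v(\varpi_v) = 1$. When $\ell$ splits as $v\bar v$, I would pair the contributions from $v$ and $\bar v$: the relations $\varphi_v \varphi_{\bar v} = 1$ and $\rho_v \rho_{\bar v} = 1$ on $\QQ_\ell^\times$, coming from $\varphi|_{\AA_\QQ^\times} = \kappa_\ell = 1$ for split $\ell$ together with $\rho|_{\AA_\QQ^\times} = 1$, yield both $f(\varphi_v) = f(\varphi_{\bar v})$ and $\rho_{\bar v}(\ell) = \rho_v(\ell)^{-1}$, so the two twist factors at $v$ and $\bar v$ cancel.

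The step requiring the most care is the split case, where one must carefully identify $K_v \cong K_{\bar v} \cong \QQ_\ell$ via the natural embeddings, verify that $\psi_{K_\ell}$ decomposes as $\psi_{\QQ_\ell} \oplus \psi_{\QQ_\ell}$ under $K_\ell \cong \QQ_\ell \oplus \QQ_\ell$, and establish the conductor equality $f(\varphi_v) = f(\varphi_{\bar v})$ before cancelling the paired factors. The remaining computations are routine local manipulations. Once every factor at $\ell \neq p$ is shown to be $1$, only the factor at $p$ survives, which yields the claimed identity.
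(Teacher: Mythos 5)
Your proof is correct, and it takes a genuinely different and in fact stronger route than the paper's. The paper computes the local quotient at an inert or ramified place $\ell\neq p$ with $\varphi_\ell$ ramified as $\rho_\ell(\varpi_v)^{\pm(f(\varphi_\ell)+m(\psi_{K_v}))}$, then explicitly allows $\rho_\ell(\varpi_v)$ to be a nontrivial $p$-power root of unity and kills the contribution by choosing $\psi_{\QQ_\ell}$ so that the exponent is divisible by its order; this is what forces the ``suitably chosen $\psi_\QQ$'' in the statement. You instead prove the sharper local fact $\rho_v(\varpi_v)=1$ for every $v\nmid p$, deduced from $\rho|_{\AA_\QQ^\times}=1$. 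That restriction is indeed a direct consequence of $\rho\circ\sfc=\bar\rho=\rho^{-1}$ evaluated on $\sfc$-fixed ideles together with $p$ being odd, and it transfers cleanly to the uniformizers: in the inert case $\varpi_v=\ell$ lies in the diagonal image of $\AA_\QQ^\times$ (the unique place above $\ell$), in the ramified case $\varpi_v^2=-D$ does, and in the split case the pairing $\rho_v\rho_{\bar v}=1$ together with $f(\varphi_v)=f(\varphi_{\bar v})$ does the cancellation. The upshot is that your argument shows the conclusion holds for \emph{any} nontrivial $\psi_\QQ$, so the ``suitably chosen'' hypothesis can actually be dropped. This is also consistent with, and in some sense anticipates, Lemma \ref{lem:inert} of the paper, where it is shown that inert primes away from $p$ split completely in $K_\infty^{\ac}/K$. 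One small presentational point: at the ramified $\ell\neq p$ you write $\rho_v(\varpi_v)^2=1$ without spelling out the intermediate step $\rho_v(\varpi_v)^2=\rho_v(-D)=1$, which relies on $\iota_v(-D)$ lying in the image of $\AA_\QQ^\times$ because $v$ is again the unique place above $\ell$; it is worth making that explicit, but the reasoning is sound.
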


\begin{proof}
    The strategy is to check every place \(\ell\) of \(\QQ\) outside \(p\). Note that once $\psi_{\QQ}$ is chosen, the additive character $\psi_{K} = \psi_{\QQ} \circ \tr_{K/\QQ}$ is then fixed.

    \textbf{Case (I) in Table \ref{table2}: \(\ell \mid \infty\).} This corresponds to box (D) in Table~\ref{table}. The relative root numbers for \(\varphi \rho\) and \(\rho\) are both \(1\), hence the quotient is \(1\).

    \textbf{Case (II) in Table \ref{table2}: \(\ell \nmid p\) and \(\varphi\) is unramified at \(\ell\).} Since \(\rho\) is also unramified at \(\ell\) (by Proposition~\ref{prop:localrho}), this corresponds to box (A) in Table~\ref{table}. The relative root numbers are \(1\), so the quotient is \(1\).

    \textbf{Case (III) in Table \ref{table2}: \(\ell \nmid p\) and \(\varphi\) is ramified at \(\ell\).} Depending on the decomposition type of $\ell$ in $K$, we have the following cases.

    \underline{(a) The prime $\ell$ does not split in $K$}. In this case, there are only one place $v$ of $K$ lying over $\ell$. In this case, the local root number formula (Proposition~\ref{prop:localrt}(iii)) is more relevant. We have
    \[
    \frac{W(\kappa_{\ell}, (\varphi \rho)_\ell)}{W(\kappa_{\ell}, \varphi_\ell)} = \frac{W(\kappa_{\ell}, (\varphi \rho)_v) W(\kappa_{\ell}, \psi_{\QQ_\ell})}{W(\kappa_{\ell}, \varphi_v) W(\kappa_{\ell}, \psi_{\QQ_\ell})}
    = \frac{W((\varphi \rho)_v, \psi_{K_v})}{W(\varphi_v, \psi_{K_v})},
    \]
    where \(\psi_{\QQ_\ell}\) is a unitary additive character of \(\QQ_\ell^\times\), and \(\psi_{K_v} = \psi_{\QQ_\ell} \circ \tr_{K_v/\QQ_\ell}\). The second equality follows from the definition of the relative root number.

    Since \(\varphi\) is ramified and \(\rho\) is unramified at \(\ell\), the local conductors of \(\varphi\) and \(\varphi \rho\) at \(\ell\) are the same. Hence, applying Proposition~\ref{prop:localrt}(ii), we have 
    \[
    \dfrac{W(\kappa_{\ell}, (\varphi \rho)_\ell)}{W(\kappa_{\ell}, \varphi_\ell)}
    = \dfrac{W((\varphi \rho)_v, \psi_{K_v})}{W(\varphi_v, \psi_{K_v})}
    = \dfrac{(\varphi \rho)_v^{-1}(\beta) \int_{\calO^\times} (\varphi \rho)_v^{-1}(x) \psi_{K_v}(\beta x) \, \dif x}{\varphi_v^{-1}(\beta) \int_{\calO^\times} \varphi_v^{-1}(x) \psi_{K_v}(\beta x) \, \dif x},
    \]
    where \(\beta \in \varpi_v^{-m(\psi_{K_v}) - f(\varphi_{v})} U_{K_v}\). Since \(\rho\) is unramified at \(\ell\), \(\rho_{v}(x) = 1\) for \(x \in U_{K_v}\), and hence the equality reduces to 
    \[
    \frac{W(\kappa_{\ell}, (\varphi \rho)_\ell)}{W(\kappa_{\ell}, \varphi_v)} = \frac{(\varphi \rho)_v^{-1}(\beta)}{\varphi_v^{-1}(\beta)} = \rho_{v}(\varpi_v)^{m(\psi_{K_v}) +  f(\varphi_v)}.
    \]
    Next we show that when $\ell$ does not split in $K$, we always have $\rho_{v}(\varpi_v) = 1$. Indeed, in this case, $\varpi_{v}^{\sfc} = u \varpi_{v}$ for some $u \in U_{K_{v}}$. Then we see that
    \[
    \rho_v(\varpi_{v}^{\sfc}) = \rho_{v}(u)\rho_{v}(\varpi_v) = \rho_{v}(\varpi_v),
    \]
    since $\rho_v$ is an unramified character. Meanwhile, since $\rho_v$ is anticyclotomic, we have
    \[
    \rho_v(\varpi_{v}^{\sfc}) = \rho_v(\varpi_{v})^{-1}.
    \]
    Combining the two equations, we see that $\rho_v(\varpi_{v})^{2} = 1$. Since $\rho_v$ has order $p^{k}$ for an odd prime $p$, this implies that $\rho_v(\varpi_{v}) = 1$, as desired.

    \underline{(b) The prime $\ell$ splits in $K$}. In this case, we write
    \[
    \frac{W(\kappa_{\ell}, (\varphi \rho)_\ell)}{W(\kappa_{\ell}, \varphi_\ell)}
    = \frac{W((\varphi \rho)_v, \psi_{K_v})}{W(\varphi_v, \psi_{K_v})} \cdot \frac{W((\varphi \rho)_{\bar{v}}, \psi_{K_{\bar{v}}})}{W(\varphi_{\bar{v}}, \psi_{K_{\bar{v}}})}.
    \]
    Following the same argument as in the inert and ramified cases, since \(\rho\) is unramified at \(\ell\), using Proposition~\ref{prop:localrt}(iii), we have
    \[
    \frac{W(\kappa_{\ell}, (\varphi \rho)_\ell)}{W(\kappa_{\ell}, \varphi_\ell)} = (\rho_v(\beta) \rho_{\bar{v}}(\beta))^{-1} = 1,
    \]
    for $\beta \in \varpi_{v}^{-m(\psi_{K_v}) - f(\varphi_v)} U_{K_v}$, with the last equality following from \(\rho_v \rho_{\bar{v}} = 1\). 

The proposition is then proved.
\end{proof}

\subsubsection{Local relative root number at $p$}

By Proposition \ref{prop:reducetop}, it then remains to compute the quotient of the relative root numbers at \( p \). We divide this computation into cases based on whether \( p \) splits, remains inert, or ramifies in \( K \), as summarized in Table \ref{table2}.

\begin{lemma}[$p$ splits in $K$]\label{lemma:psplits}
    If \(p\calO_K = v \barv\), then
    \[
    \dfrac{W(\kappa_{p}, (\varphi \rho)_p)}{W(\kappa_{p}, \varphi_p)} = 1.
    \]
\end{lemma}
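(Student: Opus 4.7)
The plan is to reduce everything to the split-local formula of Proposition \ref{prop:splitcase}. Since $p\calO_K = v\bar v$, the base change $K \otimes_{\QQ} \QQ_p$ is of type (c), namely $K_v \oplus K_{\bar v} \simeq \QQ_p \oplus \QQ_p$. Any Hecke character $\chi$ on $K$ that restricts to $\kappa$ on $\QQ^\times$ decomposes accordingly as $\chi_p = \chi_v \oplus \chi_{\bar v}$, and because $\kappa$ is trivial on $\QQ_p^\times$ in the split case, $\chi_v \chi_{\bar v} = 1$ on $\QQ_p^\times$. In particular this applies to both $\varphi$ and $\varphi\rho$.

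With this decomposition I would apply Proposition \ref{prop:splitcase} termwise, yielding
\[
W(\kappa, \varphi_p) = \varphi_{\bar v}(-1), \qquad W(\kappa, (\varphi\rho)_p) = (\varphi\rho)_{\bar v}(-1) = \varphi_{\bar v}(-1)\,\rho_{\bar v}(-1).
\]
Taking the quotient, the contribution from $\varphi$ cancels and one is left with
\[
\dfrac{W(\kappa, (\varphi\rho)_p)}{W(\kappa, \varphi_p)} = \rho_{\bar v}(-1).
\]

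It then remains to verify that $\rho_{\bar v}(-1) = 1$, which is a short order-theoretic check rather than a genuine obstacle. By construction $\rho$ factors through $\Gal(K_n^{\ac}/K) \cong \ZZ/p^n\ZZ$ for some $n$, so $\rho_{\bar v}(-1)$ is a root of unity of $p$-power order; on the other hand $(-1)^2 = 1$ forces $\rho_{\bar v}(-1)^2 = 1$. Since $p$ is odd, $\gcd(2, p^n) = 1$, and the only common root of unity is $1$. This is consistent with the remark before the lemma, which notes that the split case does not require the local class field theory computations developed for the inert and ramified cases.
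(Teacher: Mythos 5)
Your proof is correct and follows essentially the same route as the paper: both reduce to Proposition~\ref{prop:splitcase} and observe that the $\varphi$-contributions cancel in the quotient, leaving $\rho_{\bar v}(-1)$. Your justification that $\rho_{\bar v}(-1)=1$ (it is a root of unity of $p$-power order that also squares to $1$, and $p$ is odd) is in fact cleaner and more explicit than the paper's somewhat terse assertion $\rho_v(-1)=\kappa_v(-1)=1$, which elides the order-theoretic reason why the $p$-power-order value at $-1$ must be trivial.
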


\begin{proof}
    This corresponds to \textbf{Case (IV)} in Table \ref{table2}. If $\varphi$ is ramified at places  over $p$ , it can be treated using Proposition \ref{prop:splitcase}, which yields
    \[
    \frac{W(\kappa_{p}, (\varphi \rho)_p)}{W(\kappa_{p}, \varphi_p)} = \frac{\varphi_v^{-1}(-1) \rho_v^{-1}(-1)}{\varphi_v^{-1}(-1)} = \rho_v(-1) =  1,
    \]
   where the last equality follows from the fact that $\rho_{v}(-1)^{2} = 1$ and $\rho$ has $p$-power order for an odd prime $p$.

    If $\varphi$ is unramified at places above $p$, then the denominator is equal to $1$, corresponding to \textbf{Case (A)} of Table \ref{table}. Applying Proposition \ref{prop:splitcase} to the numerator, we obtain 
    \[
    \frac{W(\kappa_{p}, (\varphi \rho)_p)}{W(\kappa_{p}, \varphi_p)} = \varphi_v^{-1}(-1) \rho_v^{-1}(-1) = 1.
   \]
   For the last equality, note that $\varphi_v$ is unramified, and hence it is trivial on $U_{K_v}$. Since $-1 \in U_{K_v}$, we then have $\varphi_v(-1) = 1$. Moreover, $\rho_v(-1) = 1$ following the same reasoning in the previous paragraph. This completes the proof of the lemma.
\end{proof}

\begin{lemma}[$p$ remains inert in $K$]\label{lemma:pinert}
    If \( v \mid p \) is inert in \( K \), denote $N_{\inert, \varphi} := j + f(\varphi_p) - 1$, then
    \[
    \dfrac{W(\kappa_{p}, (\varphi \rho)_p)}{W(\kappa_{p}, \varphi_p)} = \begin{cases}
        1, &\quad n < N_{\inert, \varphi},  \\ 
        (-1)^{f(\varphi_v \rho_v) - f(\varphi_v)}, &\quad n = N_{\inert, \varphi}, \\ 
        (-1)^{n-j+1-f(\varphi_v)}, &\quad n > N_{\inert, \varphi}.
    \end{cases}
    \]
\end{lemma}

\begin{proof}
    This corresponds to \textbf{Case (V)} in Table \ref{table2}. If $\varphi$ is ramified at $v$, it can be treated using Proposition \ref{prop:inertcase}, which yields
    \begin{align*}
        \dfrac{W(\kappa_{p}, (\varphi \rho)_p)}{W(\kappa_{p}, \varphi_p)} &= c \cdot \dfrac{(-1)^{f(\varphi_v \rho_v)} \varphi_v^{-1}(\sqrt{-D}) \rho_v^{-1}(\sqrt{-D})}{(-1)^{f(\varphi_v)} \varphi_v^{-1}(\sqrt{-D})} \\ 
        &= c \cdot (-1)^{f(\varphi_v \rho_v) - f(\varphi_v)} \cdot \rho_v(\sqrt{-D})^{-1}    
    \end{align*}
    for some constant $c > 0$. By Proposition \ref{prop:localrho}(ii), $\rho_v(\sqrt{-D}) = 1$. Moreover, the quotient \( \frac{W(\kappa_{p}, (\varphi \rho)_p)}{W(\kappa_{p}, \varphi_p)} \) belongs to \( \{ \pm 1 \} \), which implies \( c = 1 \).

    If $\varphi$ is unramified at $v$, the denominator is $1$ corresponding to \textbf{Case (A)} in Table \ref{table}. Then apply Proposition \ref{prop:splitcase} to the numerator and we have 
    \begin{align*}
    \dfrac{W(\kappa_{p}, (\varphi \rho)_p)}{W(\kappa_{p}, \varphi_p)} &= c \cdot (-1)^{f(\varphi_v \rho_v)} \varphi_v^{-1}(\sqrt{-D}) \rho_v^{-1}(\sqrt{-D}) \\ 
    &= c \cdot (-1)^{f(\varphi_v \rho_v)} \cdot \rho_v(\sqrt{-D})^{-1}    
    \end{align*}
    for some constant $c > 0$. By the same argument in the previous paragraph, we see $\rho_v(\sqrt{-D}) = 1$ and $c=1$. Therefore, the first part of the lemma is proved. Now, we compute $f(\varphi_v \rho_v)$ using Proposition \ref{prop:localrho}(2):
    
    \textbf{Case 1: $n < j + f(\varphi_v) - 1$}.
    In this case, $0 \leq f(\rho_v) < f(\varphi_v)$.
    Thus, $f(\varphi_v \rho_v) = f(\varphi_v)$, so $f(\varphi_v \rho_v) - f(\varphi_v) = 0$. In this case, 
    \[
    \dfrac{W(\kappa_{p}, (\varphi \rho)_p)}{W(\kappa_{p}, \varphi_p)} = 1.
    \]

    \textbf{Case 2: $n = j + f(\varphi_v) - 1$}. In this case, we do not compute any further.

    \textbf{Case 3: $n > j + f(\varphi_v) - 1$}. In this case, $f(\rho_v) = n - j + 1 > f(\varphi_v)$. Hence $f(\varphi_v \rho_v) = f(\rho_v)$, so 
    \[
    \dfrac{W(\kappa_{p}, (\varphi \rho)_v)}{W(\kappa_{p}, \varphi_p)} = (-1)^{n - j + 1 - f(\varphi_v)}.
    \]
    This proves the lemma.
\end{proof}

The most complicated case occurs when $p$ ramifies in $K$ as \( p\calO_{K} = v^2 \). In this case, \( \varpi_{v} = \sqrt{-D} \) serves as the uniformizer of \( K_v \) since \( p \mid -D \). Before explicit computations, we record the following observation.

\begin{proposition} \label{prop:jacobisumobservation}
Let $F$ be a nonarchimedean local field with odd residual characteristic $p$. Let $K/F$ be a ramified quadratic extension, and let $\kappa: F^\times \to \{\pm 1\}$ be the quadratic character associated to $K/F$. If $\chi: K^\times \to \CC^\times$ is any character such that $\chi|_{F^\times} = \kappa$, then $f(\chi)$ is either $1$ or an even integer $n \geq 2$.
\end{proposition}

\begin{proof}
This is an observation from \cite[page 160]{MR1133776}, where $\chi$ is assumed to be a \emph{unitary character}. We revisit the proof in \textit{loc. cit.} and show that this assumption is in fact unnecessary.

By local class field theory, since $K/F$ is a ramified quadratic extension and $p$ is odd, $f(\kappa) = 1$. Since $\chi|_{F^\times} = \kappa$, the restriction of $\chi$ to $U_{F}$ cannot be trivial. Given that $U_F \subseteq U_K$, it follows that $\chi$ is nontrivial on $U_K$. By definition of the conductor of a character, we must have $f(\chi) \geq 1$.

Suppose for the sake of contradiction that $f(\chi) > 1$ and $f(\chi)$ is an odd integer. We can therefore write $f(\chi) = 2m + 1$ for some integer $m \geq 1$, which means that $\chi$ is trivial on $U_K^{(2m+1)} = 1 + \varpi_K^{2m+1}\calO_K$, but nontrivial on $U_K^{(2m)} = 1 + \varpi_K^{2m} \calO_K$. Consequently, $\chi$ induces a nontrivial character $\barchi$ on the quotient group
\[
\barchi: U_K^{(2m)} / U_K^{(2m+1)} \longrightarrow \mathbb{C}^\times.
\]

Because the extension $K/F$ is ramified, the residue field extension is trivial, which means $\calO_K/\varpi_K \calO_K \cong \calO_F/\varpi_F \calO_F$. The ramification index is $e(K/F) = 2$, so the valuations satisfy $v_K(\varpi_F) = 2$. Thus, we can choose uniformizers such that $\varpi_F = u \varpi_K^2$ for some unit $u \in U_F$. Now consider elements in $U_F^{(m)} = 1 + \varpi_F^m \mathcal{O}_F$. For any $x \in U_F$, we can write
\[
1 + x \varpi_F^m = 1 + x (u \varpi_K^2)^m = 1 + x u^m \varpi_K^{2m}.
\]
As $x$ ranges over a complete set of representatives of the residue field $\calO_F/\varpi_F \calO_F$, the term $x u^m$ also ranges over a complete set of representatives of $\calO_K/\varpi_K \calO_K$ since $u^m$ is a unit. Therefore, the natural homomorphism
\[
U_F^{(m)} \longrightarrow U_K^{(2m)} / U_K^{(2m+1)}
\]
is surjective. Recall that $\chi$ is nontrivial on this quotient, there must exist some $x \in U_F$ such that $\chi(1 + x \varpi_F^m) \neq 1$. On the other hand, we can evaluate this value directly using the restriction property $\chi|_{F^\times} = \kappa$ as
\[
\chi(1 + x \varpi_F^m) = \kappa(1 + x \varpi_F^m).
\]
By our assumption, $f(\chi) > 1$, which forces $m \geq 1$. This implies that the element $1 + x \varpi_F^m$ belongs to $U_F^{(1)}$. However, $f(\kappa) = 1$, meaning $\kappa$ is trivial on $U_F^{(1)}$. Thus $\kappa(1 + x \varpi_F^m) = 1$. This yields $\chi(1 + x \varpi_F^m) = 1$, which contradicts our previous deduction that $\chi(1 + x \varpi_F^m) \neq 1$. 

Hence, the conductor exponent $f(\chi)$ cannot be an odd integer strictly greater than 1. If $f(\chi) \neq 1$, it must be an even integer.
\end{proof}

For simplicity, we write \( f_1 = f(\rho_v \varphi_v) \) and \( f_2 = f(\varphi_v) \), and we denote \( l_1 = l_{\rho_v \varphi_v} \) and \( l_2 = l_{\varphi_v} \). We note that $l_i$ is defined using the fixed uniformizer $\varpi= \sqrt{-D}$. We also note that:
\begin{itemize}
    \item If $f(\rho_v) \neq f(\varphi_v)$, then $f(\rho_v\varphi_v) = \max \{f(\rho_v), f(\varphi_v) \}$. 
    \item The exceptional case $f(\rho_v)=f(\varphi_v)$ occurs at a unique layer $n=N_{\ram,\varphi}:=j+\frac{f(\varphi_v)}{2}$ by Proposition \ref{prop:localrho}. Note that by Proposition \ref{prop:anticychar}, the local character $\varphi_v$ is ramified, and hence $N_{\ram, \varphi} > j$. In this situation, $f(\rho_v\varphi_v)\leq f(\rho_v)=f(\varphi_v)$, and strict inequality may occur. We shall treat this case separately.
\end{itemize}

\begin{lemma}[$p$ ramifies in $K$]\label{lemma:pramified}
    Suppose $p$ ramifies in $K$ as $p\calO_{K}=v^2$ and let $\rho \in \frY_{n}^{\dagger}$. Then if $n \leq j$, we have 
    \[
    \dfrac{W(\kappa_p,(\varphi \rho)_p)}{W(\kappa_p,\varphi_p)} = 1.
    \] 
    If $n > j$, and futher suppose that $n \neq N_{\ram, \varphi}$, we have the following cases:
\begin{enumerate}[\rm (i)]
    \item When $p \equiv 1 \pmod 4$, then 
    \[
    \dfrac{W(\kappa_{p},(\varphi \rho)_p)}{W(\kappa_{p},\varphi_p)} = \begin{cases}
\dleg{l_1 l_2}{p}, &\quad\text{if } f(\rho_v) > f(\varphi_v)>1 ,\\
1, &\quad \text{if } f(\rho_v)<f(\varphi_v) \text{ and } f(\varphi_v)>1, \\
\dleg{l_1}{p} \varphi_v(\varpi_{v}), &\quad\text{if } f(\rho_v) > f(\varphi_v)=1.\\
\end{cases}
\]
\item When $p \equiv 3 \pmod 4$, then
    \begin{equation*}
\dfrac{W(\kappa_{p},(\varphi \rho)_p)}{W(\kappa_{p},\varphi_p)} = \begin{cases}
\dleg{l_1 l_2}{p}(-1)^{(f(\rho_v)-f(\varphi_v))/2}, &\quad\text{if } f(\rho_v) > f(\varphi_v)>1,\\
1, &\quad\text{if } f(\rho_v)<f(\varphi_v) \text{ and } f(\varphi_v)>1,\\
\dleg{l_1}{p} (-1)^{\frac{f(\rho_v)}{2}+1} \dfrac{i}{\varphi_v(\varpi_{v})}, &\quad\text{if } f(\rho_v)>f(\varphi_v)=1.
\end{cases}
\end{equation*}
\end{enumerate}
Suppose that $n = N_{\ram, \varphi}$, then
\begin{enumerate}
    \item[\rm (iii)] When $p \equiv 1 \pmod{4}$, then
    \begin{equation*}
        \dfrac{W(\kappa_{p},(\varphi \rho)_p)}{W(\kappa_{p},\varphi_p)} = \begin{cases}
        \dleg{l_2}{p} \varphi_{v}(\varpi_{v}), &\quad f(\varphi_v \rho_v) = 1, \\ 
        \dleg{l_1 l_2}{p}, &\quad f(\varphi_v \rho_v) > 1.
        \end{cases}
    \end{equation*}
    \item[\rm (iv)] When $p \equiv 3 \pmod{4}$, then
    \begin{equation*}
        \dfrac{W(\kappa_{p},(\varphi \rho)_p)}{W(\kappa_{p},\varphi_p)} = \begin{cases}
        (-1)^{f(\rho_v)/2 + 1} \dleg{l_2}{p} \dfrac{\varphi_{v}(\varpi_{v})}{i}, &\quad f(\varphi_v \rho_v) = 1, \\ 
        (-1)^{(f(\rho_v\varphi_v) - f(\rho_v))/2} \dleg{l_1 l_2}{p}, &\quad f(\varphi_v \rho_v) > 1.
        \end{cases}
    \end{equation*}
\end{enumerate}
\end{lemma}

\begin{proof}
Firstly, recall from Proposition \ref{prop:localtotram} that when \( n \leq j \), we have \( K_{n,v_n}^{\ac} = K_v \). Thus, if \( \rho \) has the level \( n \leq j \), then \( \rho_v = 1 \). In this case, 
\[
\dfrac{W(\kappa_{p}, (\varphi \rho)_p)}{W(\kappa_{p}, \varphi_p)} = 1.
\]
Therefore, we only consider the case where \( f(\rho_v) > 0 \). Considering the formula \eqref{equation:ramified}, we have the following cases when $p$ ramifies in $K$:
\begin{table}[H]
    \centering
    \begin{tabular}{|c|c|c|}
    \hline
    & \( p \equiv 1 \pmod{4} \) & \( p \equiv 3 \pmod{4} \) \\ \hline
    \multirow{3}{*}{\( f(\varphi_v) > 1 \)} & \( f(\rho_v) > f(\varphi_v) \) & \( f(\rho_v) > f(\varphi_v) \) \\ \cline{2-3} 
     & \( f(\rho_v) = f(\varphi_v) \) & \( f(\rho_v) = f(\varphi_v) \) \\ \cline{2-3} 
     & \( f(\rho_v) < f(\varphi_v) \) & \( f(\rho_v) < f(\varphi_v) \) \\ \hline
    \( f(\varphi_v) = 1 \) & \( f(\rho_v) > f(\varphi_v) \) & \( f(\rho_v) > f(\varphi_v) \) \\ \hline
    \end{tabular}
    \caption{Cases when \( p \) ramifies in \( K \)}
    \label{tab:my-table}
\end{table}
We consider each case separately.

\textbf{Case 1: $p \equiv 1 \pmod{4}$ and $f(\rho_v) > f(\varphi_v) > 1$.}  
In this case, $f_1 = f(\rho_v)$ and $f_2 = f(\varphi_v)$ are both even. Then using formula \eqref{equation:ramified}, we have
\begin{align*}
 \dfrac{W(\kappa_{p}, (\varphi \rho)_p)}{W(\kappa_{p}, \varphi_p)} = \dfrac{\leg{-2l_1}{p} (\varphi_v \rho_v)(\varpi_{v}^{f_1-1})}{\leg{-2l_2}{p} \varphi_v(\varpi_{v}^{f_2-1})} &= \leg{l_1 l_2}{p} \varphi_v(-D)^{(f_1-f_2)/2} \\ 
 &= \leg{l_1 l_2}{p} \kappa_p(-D)^{(f_1-f_2)/2}.    
\end{align*}
Here we have used the observations that $\varphi_v(-D) = \kappa_{p}(-D)$ and $\rho_v(\varpi) = 1$ by Proposition \ref{prop:localrho}(ii). To compute $\kappa_p(-D)$, since $p$ ramifies in $K$, we write
\[
\kappa_p(-D) = \kappa_p(-1) \kappa_p(p) \kappa_p(D/p) = \kappa_p(p) \kappa_p(D/p),
\]
since $\kappa_p(-1) = 1$ in this case of $p \equiv 1 \bmod{4}$.
Since $K_v/F_p$ is a ramified extension, we need to discuss $K_v = \mathbb{Q}_p(\sqrt{p})$ or $K_v = \mathbb{Q}_p(\sqrt{pr})$, where $r$ is a quadratic non-residue modulo $p$. In both cases, \[
\Norm(K_v^{\times}) = (D)^\ZZ \times \Norm(U_K).
\]  
We have the two subcases individually to prove that $\kappa_p(-D) = 1$.
\begin{enumerate}[(a)]
    \item \underline{$K_v = \QQ_p(\sqrt{p})$, i.e. $-D/p$ is a quadratic residue modulo $p$}. Then since $p \equiv 1 \bmod 4$, the integer $D/p$ is a quadratic residue modulo $p$, and hence $\kappa_p(D/p) = 1$. Moreover, $p$ is in the norm group of $K_v^{\times}$, and hence $\kappa_p(p) = 1$. Therefore, $\kappa_p(-D) = 1$.
    \item \underline{$K_v = \QQ_p(\sqrt{pr})$, i.e. $-D/p$ is a quadratic non-residue modulo $p$}. Then since $p \equiv 1 \bmod 4$, the integer $D/p$ is a quadratic non-residue modulo $p$, and hence $\kappa_p(D/p) = -1$. Moreover, $\kappa_p(p) = -1$ since $p$ does not belong to the norm group of $K_v^{\times}$ anymore. To sum up, we have $\kappa_p(-D) = 1$.
\end{enumerate}
Therefore, in this case,  
\[
\dfrac{W(\kappa_{p}, (\varphi \rho)_p)}{W(\kappa_{p}, \varphi_p)} = \leg{l_1 l_2}{p}.
\]  

\textbf{Case 2: $p \equiv 1 \pmod{4}$ and $f(\rho_v) = f(\varphi_v) > 1$.}  
In this case, $f_2 = f(\varphi_v)$ is even, whereas $f_1=f(\varphi_v\rho_v)$ may be either 1 or a positive even integer by Proposition \ref{prop:jacobisumobservation}. We therefore consider the following two subcases.

\underline{Case 2.1: $f_1=f(\varphi_v\rho_v)=1$.}
Using formula \eqref{equation:ramified}, and noting that different instances of the formula should be applied to the numerator and denominator, we obtain
\begin{align*}
\dfrac{W(\kappa_{p}, (\varphi \rho)_p)}{W(\kappa_{p}, \varphi_p)} = \dfrac{\leg{2}{p} }{\leg{-2l_2}{p} \varphi_v(\varpi_{v}^{f_2-1})} &=\leg{-1}{p} \leg{l_2}{p} \varphi_v(\varpi_v^{-f_2+1}) \\ 
&= \leg{l_2}{p} \varphi_v(\varpi_v^{-f_2+1}),
\end{align*}
where the last equality follows from the assumption that $p \equiv 1 \pmod{4}$, which implies $\leg{-1}{p}=1$. Furthermore,
\[
\varphi_v(\varpi_v)^2=\varphi_v(-D)=1,
\]
so that $\varphi_v(\varpi_v)\in\{\pm1\}$. Since $1-f_2$ is odd, it follows that
\[
\varphi_v(\varpi_v^{-f_2+1})=\varphi_v(\varpi_v).
\]
Therefore,
\[
\dfrac{W(\kappa_{p}, (\varphi \rho)_p)}{W(\kappa_{p}, \varphi_p)} = \leg{l_2}{p} \varphi_v(\varpi_v)
\]

\underline{Case 2.2: $f_1=f(\varphi_v\rho_v)$ is an even integer.} Using formula \eqref{equation:ramified}, 
\begin{align*}
\dfrac{W(\kappa_{p}, (\varphi \rho)_p)}{W(\kappa_{p}, \varphi_p)} = \dfrac{\leg{-2l_1}{p} (\varphi_v \rho_v)(\varpi_{v}^{f_1-1})}{\leg{-2l_2}{p} \varphi_v(\varpi_{v}^{f_2-1})} &= \leg{l_1 l_2}{p} \varphi_v(\varpi_{v}^{f_1-f_2})  \rho_v(\varpi_{v}^{f_1-1}) \\ &=  \leg{l_1 l_2}{p} \varphi_v(-D)^{(f_1-f_2)/2}  \rho_v(\varpi_{v}^{f_1-1}) \\
&= \leg{l_1 l_2}{p}. 
\end{align*}
In the last equality, we used Proposition \ref{prop:localrho}(ii) stating that $\rho_v(\varpi_{v}) = 1$, and the computation in \textbf{Case 1} that $\varphi_v(-D) = 1$.

\textbf{Case 3: $p \equiv 1 \pmod{4}$, $f(\rho_v) < f(\varphi_v)$, and $f(\varphi_v) > 1$.}  
Here, $f_1 = f_2 = f(\varphi_v)$ is even, and $l_1 = l_2$ by definition. Thus, by formula \eqref{equation:ramified} again, 
\[
\dfrac{W(\kappa_{p}, (\varphi \rho)_p)}{W(\kappa_{p}, \varphi_p)} = \dfrac{\leg{-2l_1}{p} (\varphi_v \rho_v)(\varpi_{v}^{f_1-1})}{\leg{-2l_2}{p} \varphi_v(\varpi_{v}^{f_2-1})} = \leg{l_1 l_2}{p} \rho_v(\varpi_{v}^{f_1-1}) = 1.
\]
In the last equality, we used Proposition \ref{prop:localrho}(ii) stating that $\rho_v(\varpi_{v}) = 1$.

\textbf{Case 4: $p \equiv 1 \pmod{4}$ and $f(\rho_v) > f(\varphi_v) = 1$.}  
Using different formulas in \eqref{equation:ramified} for $f_2 = f(\varphi_v) = 1$ and $f_1 = f(\varphi_v \rho_v) = f(\rho_v) > 1$, which is even, we have  
\begin{align*}
  \dfrac{W(\kappa_{p}, (\varphi \rho)_p)}{W(\kappa_{p}, \varphi_p)} &= \dfrac{\leg{-2l_1}{p} (\varphi_v \rho_v)(\varpi_{v}^{f_1-1})}{\leg{2}{p}} \\ 
  &= \leg{-1}{p} \leg{l_1}{p} \varphi_v(\varpi_{v})^{f_1 - 1} \\ 
  &\xlongequal{p \equiv 1 \bmod{4}} \leg{l_1}{p} \varphi_v(\varpi_{v})^{f_1 - 1}.  
\end{align*}
Note that $\varphi_v(\varpi_{v})^2 = \kappa_p(-D) = 1$ by invoking the computation in \textbf{Case 1}, the above formula further simplifies to 
\[
\dfrac{W(\kappa_{p}, (\varphi \rho)_p)}{W(\kappa_{p}, \varphi_p)} = \leg{l_1}{p} \varphi_v(\varpi_{v}).
\] 
By the way, $\varphi_v(\varpi_{v}) \in \{\pm 1\}$ since $\varphi_v(\varpi_{v})^2 = 1$.

\textbf{Case 5: $p \equiv 3 \pmod{4}$ and $f(\rho_v) > f(\varphi_v) > 1$.}  
In this case, $f_1 = f(\rho_v)$ and $f_2 = f(\varphi_v)$ are both even. Then by formula \eqref{equation:ramified}, 
\[
\dfrac{W(\kappa_{p}, (\varphi \rho)_p)}{W(\kappa_{p}, \varphi_p)} = \dfrac{\leg{-2l_1}{p} (\varphi_v \rho_v)(\varpi_{v}^{f_1-1})}{\leg{-2l_2}{p} \varphi_v(\varpi_{v}^{f_2-1})} = \leg{l_1 l_2}{p} \varphi_v(-D)^{(f_1-f_2)/2}.
\]  
Similar to the discussion in \textbf{Case 1}, we find that $\varphi_v(-D) = \kappa_p(-D) = -1$. Hence  
\[
\dfrac{W(\kappa_{p}, (\varphi \rho)_p)}{W(\kappa_{p}, \varphi_p)} = \leg{l_1 l_2}{p} (-1)^{(f_1-f_2)/2}.
\]  

\textbf{Case 6: $p \equiv 3 \pmod{4}$ and $f(\rho_v) = f(\varphi_v) > 1$.}  
This case is similar to \textbf{Case 2}. In this case, $f_2 = f(\varphi_v)$ is even, whereas $f_1=f(\varphi_v\rho_v)$ may be either 1 or a positive even integer by Proposition \ref{prop:jacobisumobservation}. We therefore consider the following two subcases.

\underline{Case 6.1: $f_1=f(\varphi_v\rho_v)=1$.}
Using formula \eqref{equation:ramified}, and noting that different instances of the formula should be applied to the numerator and denominator, we obtain
\begin{align*}
\dfrac{W(\kappa_{p}, (\varphi \rho)_p)}{W(\kappa_{p}, \varphi_p)} 
&= \dfrac{\leg{2}{p}}{\leg{-2l_2}{p}\varphi_v(\varpi_v^{f_2-1})i} \\
&= \leg{-1}{p}\leg{l_2}{p}\varphi_v(\varpi_v^{-f_2+1})/i \\
&= -\leg{l_2}{p}\varphi_v(\varpi_v)^{1-f_2}/i,
\end{align*}
where we used that $p \equiv 3 \pmod{4}$, so that $\leg{-1}{p}=-1$. Note that
\[
\varphi_v(\varpi_v)^2=\varphi_v(-D)=-1,
\]
hence $\varphi_v(\varpi_v)\in\{\pm i\}$. Since $f_2$ is even, we have
\[
\varphi_v(\varpi_v)^{1-f_2} = (-1)^{f_2/2}\varphi_v(\varpi_v).
\]
Therefore,
\[
\dfrac{W(\kappa_{p}, (\varphi \rho)_p)}{W(\kappa_{p}, \varphi_p)} 
= (-1)^{f_2/2+1}\leg{l_2}{p}\frac{\varphi_v(\varpi_v)}{i}.
\].

\underline{Case 6.2: $f_1=f(\varphi_v\rho_v)$ is even positive integer.} Using formula \eqref{equation:ramified}, 
\begin{align*}
\dfrac{W(\kappa_{p}, (\varphi \rho)_p)}{W(\kappa_{p}, \varphi_p)} = \dfrac{\leg{-2l_1}{p} (\varphi_v \rho_v)(\varpi_{v}^{f_1-1})}{\leg{-2l_2}{p} \varphi_v(\varpi_{v}^{f_2-1})} &= \leg{l_1 l_2}{p} \varphi_v(\varpi_{v}^{f_1-f_2})  \rho_v(\varpi_{v}^{f_1-1}) \\ &=  \leg{l_1 l_2}{p} \varphi_v(-D)^{(f_1-f_2)/2}  \rho_v(\varpi_{v}^{f_1-1}) \\
&= (-1)^{(f_1-f_2)/2}\leg{l_1 l_2}{p}.    
\end{align*}
In the last equality, we used Proposition \ref{prop:localrho}(ii) stating that $\rho_v(\varpi_{v}) = 1$, and the computation in \textbf{Case 5} that $\varphi_v(-D) = - 1$.

\textbf{Case 7: $p \equiv 3 \pmod{4}$, $f(\rho_v) < f(\varphi_v)$, and $f(\varphi_v) > 1$.}  
Here, $f_1 = f_2 = f(\varphi_v)$ is even, and $l_1 = l_2$ by definition. Thus, formula \eqref{equation:ramified} gives
\[
\dfrac{W(\kappa_{p}, (\varphi \rho)_p)}{W(\kappa_{p}, \varphi_p)} = \dfrac{\leg{-2l_1}{p} (\varphi_v \rho_v)(\varpi_{v}^{f_1-1})}{\leg{-2l_2}{p} \varphi_v(\varpi_{v}^{f_2-1})} = \leg{l_1 l_2}{p} \rho_v(\varpi_{v}^{f_1-1}) = 1
\]  

\textbf{Case 8: $p \equiv 3 \pmod{4}$ and $f(\rho_v) > f(\varphi_v) = 1$.}  
Using different formulas in \eqref{equation:ramified} for $f_2 = f(\varphi_v) = 1$ and $f_1 = f(\varphi_v \rho_v) = f(\rho_v) > 1$, which is even, we have, by using formula \eqref{equation:ramified}, 
\begin{align*}
\dfrac{W(\kappa_{p}, (\varphi \rho)_p)}{W(\kappa_{p}, \varphi_p)} &= \dfrac{\leg{-2l_1}{p} (\varphi_v \rho_v)(\varpi_{v}^{f_1-1}) i}{\leg{2}{p}} \\ 
&= \leg{-1}{p} \leg{l_1}{p} \varphi_v(\varpi_{v}^{f_1-1}) i \\ &\xlongequal{p \equiv 3 \bmod{4}} -\leg{l_1}{p} \varphi_v(\varpi_{v}^{f_1-1}) i.
\end{align*}
We invoke the computation in \textbf{Case 5} to see that $\varphi_v(\varpi_{v})^2 = \kappa_p(-D) = -1$, and hence
\[
\dfrac{W(\kappa_{p}, (\varphi \rho)_p)}{W(\kappa_{p}, \varphi_p)} =  \leg{l_1}{p} (-1)^{\frac{f_1}{2}+1} \dfrac{i}{\varphi_v(\varpi_{v})}.
\]
By the way, $\varphi_v(\varpi_{v}) \in \{\pm i\}$ since $\varphi_v(\varpi_{v})^2 = -1$. Hence the quotient $i/\varphi_v(\varpi_{v}) \in \{ \pm 1\}$ indeed.
\end{proof}

\begin{remark}
One could consider the general case: let $\chi$ and $\chi_0$ be two Hecke characters of $K$, and compute the quotient 
\[
R(\chi_0, \chi) := \frac{W(\chi_0 \chi)}{W(\chi_0)},
\]
using similar computations. However, this is more complicated since there are more cases to consider, especially the local quotient at the places above $2$. In our case, $\rho$ consists of Hecke characters factoring through the anticyclotomic tower $K_{\infty}^{\ac}/K$, and hence are unramified outside the odd prime $p$, which simplifies the computation significantly.
\end{remark}

\subsection{Global computation}
We summarize our local results to obtain the following theorem on global root numbers. Recall that when $p$ ramifies in $K$ as $p \calO_{K} = v^{2}$, we use $\varpi_{v} = \sqrt{-D}$ as a fixed uniformizer of $\calO_{K_{v}}$.

\begin{theorem} \label{thm:rootnumber}
For $\chi = \varphi \rho \in \frX_{\varphi, n}^{\ac, \dagger}$, the root number $W(\chi)$ is given by the following formulas.
\begin{enumerate}[\rm (i)]
    \item If $p$ splits in $K$, then $W(\chi) = W(\varphi)$,
    \item If $p$ remains inert in $K$, then
    \[
    W(\chi) = \begin{cases}
    W(\varphi), & \text{if } n < j + f(\varphi_p) - 1, \\
    (-1)^{f(\varphi_v \rho_v) - f(\varphi_v)} W(\varphi), & \text{if } n = N_{\inert, \varphi} = j + f(\varphi_v) - 1, \\
    (-1)^{n - j + 1 - f(\varphi_v)} W(\varphi), & \text{if } n > j + f(\varphi_v) - 1.
    \end{cases}
    \]
    \item If $p$ ramifies in $K$ and $n \leq j$, then $W(\chi) = W(\varphi)$.
    \item If $p$ ramifies in $K$ and $p \equiv 1 \pmod{4}$, suppose that $n \neq N_{\ram, \varphi}$, then
    \[
    W(\chi) = \begin{cases}
    W(\varphi) \dleg{l_1 l_2}{p}, & \text{if } 2(n - j) > f(\varphi_v) > 1, \\
    1, &\text{if } 2(n - j) < f(\varphi_v) \text{ and } f(\varphi_v) > 1,\\
    W(\varphi) \dleg{l_1}{p} \varphi_v(\varpi_{v}), & \text{if } 2(n - j) > f(\varphi_v) = 1. 
    \end{cases}
    \]
    \item If $p$ ramifies in $K$ and $p \equiv 3 \pmod{4}$, suppose that $n \neq N_{\ram, \varphi}$, then
    \[
    W(\chi) = \begin{cases}
        W(\varphi) \dleg{l_1 l_2}{p} (-1)^{n-j +(f(\varphi_v)/2)}, & \text{if } 2(n - j) > f(\varphi_v) > 1, \\
        W(\varphi), & \text{if } 2(n - j) < f(\varphi_v) \text{ and } f(\varphi_v) > 1, \\
        W(\varphi) \dleg{l_1}{p} (-1)^{n-j+1} \dfrac{i}{\varphi_v(\varpi_{v})}, & \text{if } 2(n - j) > f(\varphi_v) = 1.
    \end{cases}
    \]    
\end{enumerate}
Additionally, suppose $p$ ramifies in $K$ and $n = N_{\ram, \varphi}$, then
    \begin{equation*}
        W(\chi) = \begin{cases}
        W(\varphi) \dleg{l_2}{p} \varphi_{v}(\varpi_{v}), &\quad f(\varphi_v \rho_v) = 1, \, p \equiv 1 \pmod{4} \\ 
        W(\varphi) \dleg{l_1 l_2}{p}, &\quad f(\varphi_v \rho_v) > 1, \, p \equiv 1 \pmod{4}, \\ 
        W(\varphi) (-1)^{f(\rho_v)/2 + 1} \dleg{l_2}{p} \dfrac{\varphi_{v}(\varpi_{v})}{i}, &\quad f(\varphi_v \rho_v) = 1, \, p \equiv 3 \pmod{4} \\ 
        (-1)^{(f(\rho_v\varphi_v) - f(\rho_v))/2} \dleg{l_1 l_2}{p}, &\quad f(\varphi_v \rho_v) > 1, \, p \equiv 3 \pmod{4}.
        \end{cases}
    \end{equation*}
\end{theorem}

\begin{proof}
By Proposition \ref{prop:reducetop}, we only need to consider the quotient of the relative root number at $p$, which follows from Lemma \ref{lemma:psplits} (when $p$ splits in $K$), Lemma \ref{lemma:pinert} (when $p$ remains inert in $K$), and Lemma \ref{lemma:pramified} (when $p$ ramifies in $K$) for each case in Table \ref{table2}.
\end{proof}

\begin{remark} \label{rmk:greenberg}
This result was first observed by Greenberg in \cite[page 247]{MR700770} when $p$ is unramified in $K$, in the case of elliptic curves $E$. His proof is entirely different from ours, relying on the following fact of Weil \cite[page 161]{weil1971DirichletSeriesAutomorphic}: Let $\chi_1$ and $\chi_2$ be unitary Hecke characters with relatively prime conductors $\frf_{1}$ and $\frf_{2}$ with some extra hypotheses, then 
    \[
    W(\chi_1 \chi_2) = W(\chi_1) W(\chi_2) \chi_1(\frf_{2}) \chi_2(\frf_{1}).
    \]
This fact can only handle the case when $\varphi$ and $\rho$ have coprime conductors. Since $\rho$ is ramified at $p$, it can only handle the case when $\varphi$ is unramified at $p$. From Proposition \ref{prop:anticychar}, this restricts us to the case when $p$ is unramified in $K$ (i.e., the elliptic curve has good reduction at $p$). Our approach of computing local root numbers covers the case when $p$ is ramified.
\end{remark}

\section{Proof of the main result} \label{sec:mainresult}
In this section, we utilize the analytic results on root numbers obtained in the previous sections to study the growth of the Mordell-Weil rank of the abelian variety $A_{\varphi}$ along the anticyclotomic $\ZZ_p$-extension $K_{\infty}^{\ac}/K$. An important tool connecting the analytic properties to the arithmetic side is the well-known Gross-Zagier-Kolyvagin theorem. Yet it requires more work on finding the exact abelian varieties to apply that, which we shall explain now.

Let $\varphi: \AA_{K}^{\times} \rightarrow \CC^{\times}$ be an anticyclotomic Hecke character over $K$ of infinite type $(1,0)$, as described in the previous sections. We consider $\chi := \varphi \rho \in \frX^{\ac}_{\varphi, n}$, where $\rho$ is a Galois character of level $n$. We define $\Phi_{\rho}$ to be the Hecke field $K(\varphi\rho(\hatK^{\times}))$, where $\hatK := K \otimes \varprojlim_{m} \ZZ/m\ZZ$ is the finite adelic ring and $\hatK^{\times}$ denotes its multiplicative group.

We associate the character $\chi$ with a theta series $\theta_{\varphi\rho} \in S_{2}(\Gamma_0(N_{\varphi\rho}))$ by the theory of Hecke and Shimura (see, for example \cite{MR453647} for an introduction). It satisfies
\[
L(\theta_{\varphi\rho}, s) = L(\varphi\rho, s).
\]
Let $M_{\rho}$ be the Hecke field of $\theta_{\varphi\rho}$. To the theta series $\theta_{\varphi\rho}$, we further associate it with an abelian variety $A_{\varphi\rho}$ over $\QQ$ by the construction of Eichler and Shimura, characterized up to isogeny by
\[
L(A_{\varphi\rho}/\QQ, s) = \prod_{\sigma: M_{\rho} \hookrightarrow \CC} L(\theta_{\varphi\rho}^{\sigma}, s).
\]
where $\theta_{\varphi\rho}^{\sigma}$ is the $\sigma$-conjugate of the theta series $\theta_{\varphi\rho}$. The following properties hold:
\begin{itemize}
    \item The Hecke field $M_{\rho}$ is a totally real number field, and $\dim A_{\varphi\rho} = [M_{\rho}:\QQ]$.
    \item The abelian variety $A_{\varphi\rho}$ is of $\GL_2$-type and $\End^{0}_{\QQ}(A_{\varphi\rho})$ is naturally identified with $M_{\rho}$. The field $\Phi_{\rho}$ is then an imaginary quadratic extension of $M_{\rho}$.
\end{itemize}
When $\rho = \bfone$ is the trivial character, we write $A_{\varphi}$, $M$ and $\Phi$ for $A_{\varphi\rho}$, $M_{\rho}$ and $\Phi_{\rho}$ respectively. The abelian variety $A_{\varphi}$ is the central object in this paper.

Let $B_n$ denote the Weil restriction $\Res_{K_n^{\ac}/K}(A_{\varphi})$ of the abelian variety $A_\varphi$ over $K_{n}^{\ac}$. As explained in \cite[Proof of Theorem 3.9]{MR4742720}, we have the following isogeny
\begin{equation} \label{eq:Anappears}
B_n \sim A_n\times B_{n-1}
\end{equation}
of abelian varieties over $K$, where $A_n$ is an abelian variety determined by the $L$-function up to isogeny. For later references, we record from \cite[Proof of Theorem 3.9]{MR4742720} that
\[
L(B_n/K,s) = \prod_{\rho \in \frY_{n}} \prod_{\sigma: \Phi \hookrightarrow \CC}L(\varphi^{\sigma}\rho,s)
\]
and
\begin{equation} \label{eq:LAnoverK}
L(A_n/K,s)=\prod_{\rho \in \frY_{n}^{\dagger}} \prod_{\sigma: \Phi \hookrightarrow \CC} L(\varphi^{\sigma}\rho,s).  
\end{equation}
By \eqref{eq:Anappears}, the set of $K$-rational points is then given by
\[ 
A_\varphi(K_n^{\ac})\otimes \mathbb{Q} \cong (A_n(K) \otimes \mathbb{Q}) \oplus (A_\varphi(K_{n-1}^{\ac}) \otimes \mathbb{Q}),
\]
and therefore
\begin{equation} \label{eq:rankdiffer}
    \rank_{\ZZ} A_{\varphi}(K_n^{\ac}) = \rank_{\ZZ} A_n(K) + \rank_{\ZZ} A_{\varphi}(K_{n-1}^{\ac}).
\end{equation}
By this observation, it suffices to study the structure of $A_n$.

\begin{proposition}\label{lemma:isogeny}
Notations being as above, there is an isogeny
\begin{equation} \label{eq:isogenystatement}
A_n \sim \prod_{\rho \in \frZ_{n}} A_{\varphi\rho}
\end{equation}
of abelian varieties over $K$, where $\frZ_{n}$ is a subset of the set of Galois characters of level $n$, to be specified in the proof. Moreover, we have
    \begin{equation} \label{eq:sum1}
        \sum_{\rho \in \frZ_{n}} [M_\rho:\mathbb{Q}] = (p^n-p^{n-1}) [M:\mathbb{Q}],    
    \end{equation}
and
    \begin{equation} \label{eq:sum2}
    \sum_{\rho \in \frZ_{n}} W(\varphi \rho) [M_{\rho} : \QQ] = \dfrac{1}{2} \sum_{\sigma: \Phi \hookrightarrow \CC} \sum_{\rho \in \frY_{n}^{\dagger}} W(\varphi^{\sigma} \rho).   
    \end{equation}
\end{proposition}

\begin{proof}
The proposition is proved by comparing $L$-functions of both sides. Recall
    \[
    L(A_{\varphi\rho}/\mathbb{Q}, s) = \prod_{\tau: M_{\rho} \hookrightarrow \CC} L(\theta_{\varphi\rho}^{\tau}, s),
    \]
hence we have
    \begin{align} \label{eq:LArhooverK}
    L(A_{\varphi\rho}/{K}, s) &= L(A_{\varphi\rho}/{\QQ}, s) L(A_{\varphi\rho}^{K}/{\QQ}, s) \nonumber \\ 
    &= \prod_{\tau: M_{\rho} \hookrightarrow \CC} L(\theta_{\varphi\rho}^{\tau}, s) L(\theta_{\varphi\rho}^{\tau \circ \sfc}, s) = \prod_{\tau: \Phi_{\rho} \hookrightarrow \CC} L((\varphi\rho)^{\tau}, s).
    \end{align}
Here $A_{\varphi\rho}^{K}$ is the quadratic twist of $A_{\varphi\rho}$ by $K$, and we have used that $L(\theta_{\varphi},s)=L(\varphi,s)$.

We now compare the right-hand-side of \eqref{eq:LAnoverK} and \eqref{eq:LArhooverK}:
\begin{itemize}
    \item The right-hand-side of \eqref{eq:LAnoverK} is a product of $(p^n - p^{n-1})[\Phi:\mathbb{Q}]$ many Hecke $L$-functions, with $\varphi$ runs over all its conjugates $\varphi^{\sigma}$ and $\rho$ runs over all Galois characters of level $n$.
    \item On the other hand, the right-hand-side of \eqref{eq:LArhooverK} is a product of $[\Phi_{\rho}:\mathbb{Q}]$ many Hecke $L$-functions, with $\varphi\rho$ runs over all conjugates of $\varphi\rho$ under $\tau: \Phi_{\rho} \hookrightarrow \CC$.
\end{itemize}
A trivial yet crucial observation is that every $(\varphi\rho)^{\tau}$ is of the form $\varphi^{\sigma} \rho^{\prime}$ for some $\sigma: \Phi \hookrightarrow \CC$ and $\rho^{\prime}$ a Galois character of level $n$. This implies that $L(A_{\varphi\rho}/K,s)$ is a factor of $L(A_n/K,s)$. They are not necessarily equal unless 
    \begin{equation} \label{eq:numberorbit}
    (p^n - p^{n-1}) [\Phi:\mathbb{Q}]=[\Phi_\rho:\mathbb{Q}].
    \end{equation}

If \eqref{eq:numberorbit} holds, then we have actually shown that $A_n$ is isogenous to $A_{\varphi\rho}$ over $K$ for a single $\rho$ of level $n$, which does not depend on the choice of $\rho$. Otherwise, we denote $\rho_1 := \rho$ and there is a factor
\[
L(\varphi \rho_2, s) \mid L(A_n/K, s), \quad \text{but } L(\varphi \rho_2, s) \nmid L(A_{\varphi  \rho_1}/K, s).
\]
Then $L(A_{\varphi \rho_2}/K, s)$ is also a factor of $L(A_n/K, s)$ which has no common factor with $L(A_{\varphi \rho_1}/K, s)$ since the characters appearing in their expressions are not conjugate under $\tau: \Phi_{\rho} \hookrightarrow \CC$. By repeating this process, extracting factors of the form $L(A_{\varphi \rho}/K, s)$ from $L(A_n/K, s)$, we obtain
    \begin{equation} \label{eq:factorizeAn}
    L(A_n/K,s)=\prod_{i=1}^t L(A_{\varphi\rho_i}/K,s),
    \end{equation}
where $\rho_i$ are the Galois characters of level $n$ chosen in this process. Let $\frZ_{n} = \{\rho_1, \ldots, \rho_t\}$. We read the isogeny of abelian varieties over $K$ from the factorization \eqref{eq:factorizeAn}, which is exactly the first claim of this proposition.

We prove \eqref{eq:sum1} by comparing the dimension of abelian varieties on both sides of the isogeny factorization of $A_n$ (i.e. \eqref{eq:isogenystatement}). Firstly,
\begin{align*}
    \dim A_{n} &= \dim B_{n} - \dim B_{n-1} \\
    &= [K_{n}^{\ac}:K] \dim A_{\varphi} - [K_{n-1}^{\ac}:K] \dim A_{\varphi} \\
    & = (p^n - p^{n-1}) [M:\QQ].
\end{align*}
Here the first equality follows from the definition of $A_n$ in \eqref{eq:Anappears} and the second equality follows from the definition of $B_n$ as the Weil restriction of $A_{\varphi}$ from $K_{n}^{\ac}$ to $K$. Secondly, $\dim A_{\varphi \rho} = [M_{\rho}: \QQ]$. It then follows from \eqref{eq:isogenystatement} that
    \[
    (p^n-p^{n-1}) [M:\mathbb{Q}]=\sum_{\rho\in \frZ_n} [M_\rho:\mathbb{Q}],
    \]
as desired.

To prove \eqref{eq:sum2}, we see that the construction of the set $\frZ_{n}$ gives us the decomposition
\[
\{\varphi^{\sigma} \rho: \sigma: \Phi \hookrightarrow \CC, \rho \in \frY_{n}^{\dagger} \} = \bigsqcup_{\rho \in \frZ_{n}} \{ (\varphi \rho)^{\tau}: \tau: \Phi_{\rho} \hookrightarrow \CC \}.
\]
Taking the summation of root numbers, we then have
\[
\sum_{\sigma: \Phi \hookrightarrow \CC} \sum_{\rho \in \frY_{n}^{\dagger}} W(\varphi^{\sigma} \rho) = \sum_{\rho \in \frZ_{n}} \sum_{\tau: \Phi_{\sigma} \hookrightarrow \CC} W((\varphi \rho)^{\tau}).
\]
Since root numbers are preserved under field automorphisms, it follows that
\[
\sum_{\rho \in \frZ_{n}} \sum_{\tau: \Phi_{\sigma} \hookrightarrow \CC} W((\varphi \rho)^{\tau}) = \sum_{\rho \in \frZ_{n}} [\Phi_{\rho} : \QQ] W(\varphi \rho).
\]
The equality \eqref{eq:sum2} then follows by noticing that $[\Phi_{\rho}: \QQ] = 2 [M_{\rho}: \QQ]$.
\end{proof}

In light of the isogeny decomposition in Proposition \ref{lemma:isogeny}, we apply the Gross--Zagier--Kolyvagin theory to the abelian varieties $A_{\varphi \rho_i}$ for $\rho_i \in \frZ_n$. In the case of elliptic curves $E/\QQ$, this theory originates in the work of Gross and Zagier \cite{gross1986heegner}, which relates the N\'eron--Tate height of Heegner points $y_K \in E(K)$ to the derivative $L'(E/K,1)$. Building on this, Kolyvagin developed his Euler system method (also known as the \emph{Kolyvagin machinery}) in \cite{kolyvagin1989finiteness, kolyvagin1990euler}, using Heegner points to bound the Shafarevich--Tate group and control the Mordell--Weil rank. These results were extended to modular abelian varieties over $\QQ$ in \cite{kolyvagin1990finiteness}, which is the form of the result used in this article. A generalization to totally real number fields is given in \cite[Theorem A]{MR1826411}. Although that work treats a broader setting, we only use the classical case over $\QQ$.

\begin{theorem}[Gross-Zagier, Kolyvagin, Kolyvagin-Logachev, Zhang]\label{lemma:GZK}
    Suppose that 
    \[
    r_{\varphi\rho} := \ord_{s=1}L(\varphi\rho,s) \in \{0,1\},
    \]
    then $\rank_{\mathbb{Z}}A_{\varphi\rho}(\mathbb{Q})=r_{\varphi\rho} [M_\rho:\mathbb{Q}]$.
\end{theorem}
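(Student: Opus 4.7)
The plan is to recognise this as an instance of the general Gross-Zagier-Kolyvagin machinery for modular abelian varieties of $\GL_2$-type. The input is the weight-$2$ newform $f := \theta_{\varphi\rho}$ with $L$-function $L(f,s) = L(\varphi\rho, s)$, and the Eichler-Shimura abelian variety $A_{\varphi\rho}$ whose endomorphism algebra over $\QQ$ is the totally real field $M_\rho$. Since $A_{\varphi\rho}(\QQ) \otimes \QQ$ carries a natural $M_\rho$-module structure, its $\QQ$-dimension is automatically a multiple of $[M_\rho : \QQ]$, so it suffices to prove that its $M_\rho$-dimension equals $r_{\varphi\rho} \in \{0,1\}$. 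I would bound this $M_\rho$-rank from below and above separately.

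For the lower bound in the case $r_{\varphi\rho} = 1$, I would pick an auxiliary imaginary quadratic field $F$ satisfying the Heegner hypothesis for $N_{\varphi\rho}$ and ensuring $L(f \otimes \epsilon_F, 1) \neq 0$, where $\epsilon_F$ is the quadratic character attached to $F/\QQ$; the existence of such $F$ follows from the non-vanishing results of Bump-Friedberg-Hoffstein or Waldspurger. The Heegner divisor on $X_0(N_{\varphi\rho})$ attached to $F$ projects to a point $y_F \in A_{\varphi\rho}(F)$, and the Gross-Zagier formula identifies its Néron-Tate height with a nonzero multiple of $L'(f/F, 1) = L'(f, 1) L(f \otimes \epsilon_F, 1)$. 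Extracting the minus part under $\Gal(F/\QQ)$ produces a non-torsion element in $A_{\varphi\rho}(\QQ) \otimes \QQ$, giving $\rank_{M_\rho} A_{\varphi\rho}(\QQ) \geq 1 = r_{\varphi\rho}$; the case $r_{\varphi\rho} = 0$ is trivial for this direction.

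For the upper bound, I would feed the Heegner class over $F$ into Kolyvagin's Euler system of derived classes, as adapted to the $\GL_2$-type setting by Kolyvagin-Logachev and generalised by Zhang. This controls the $f$-isotypic component of the appropriate $p$-Selmer group of $A_{\varphi\rho}$ over $\QQ$: when $r_{\varphi\rho} = 0$ the starting Heegner class is non-trivial and forces the Selmer group to be finite, while when $r_{\varphi\rho} = 1$ the non-torsion class $y_F$ forces the $M_\rho$-corank of the Selmer group to be at most $1$. In either case this translates into $\rank_{M_\rho} A_{\varphi\rho}(\QQ) \leq r_{\varphi\rho}$, matching the lower bound and yielding the claim.

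The main obstacle lies in the auxiliary non-vanishing step: one must produce a single field $F$ satisfying the Heegner hypothesis for the (possibly large) conductor $N_{\varphi\rho}$ \emph{and} keeping the relevant twisted $L$-value non-zero. For the CM newform $\theta_{\varphi\rho}$, where the Galois representation factors through $\Ind_{K}^{\QQ}$ and the twist $\epsilon_F$ may interact badly with $\kappa$, this is a genuine subtlety; however, it is treated in the full generality needed here in \cite{MR1826411}, which is the source we would ultimately invoke as a black box.
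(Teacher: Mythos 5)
Your proposal is correct and, at bottom, takes the same route as the paper: the paper offers no independent argument but simply cites Zhang's Theorem A in \cite{MR1826411} (and \cite[Proposition 3.5]{MR4742720}), which is exactly the reference you invoke as the final black box. Your sketch of the Gross-Zagier/Kolyvagin mechanism and of the auxiliary non-vanishing step is a faithful outline of what lies behind that citation, but it adds no content beyond what the cited theorem already provides.
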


\begin{proof}[Proof of Theorem \ref{thm:mainA}]
    By \eqref{eq:rankdiffer}, it suffices to compute $\rank A_n(K)$. It follows from Proposition \ref{lemma:isogeny} that
    \[
    \rank_{\ZZ} A_n(K) = \sum_{\rho \in \frZ_{n}} \rank_{\ZZ} A_{\varphi\rho}(K) = 2 \sum_{\rho \in \frZ_{n}} \rank_{\ZZ} A_{\varphi\rho}(\mathbb{Q}).
    \]
Here for the second equality, we observe that the abelian variety $A_{\varphi\rho}$ is isogenous to its twist by the quadratic character of $K/\QQ$. By Theorem \ref{thm:RJ}, there exists a positive integer $N_{\JR}$ such that for all $n > N_{\JR}$, we have \footnote{The subscript “RJ” refers to D. Rohrlich \cite{MR735332} and H. Jia \cite{jia2024lfunctionsheckecharactersanticyclotomic} who proved Theorem \ref{thm:RJ}.}  
    \[
    r_{\varphi\rho} = \tilW(\varphi \rho) := \dfrac{1-W(\varphi\rho)}{2} \in \{0, 1\},
    \]
and hence Theorem \ref{lemma:GZK} is applicable for $n > N_{\JR}$, which implies
\[
    \rank A_n(K) = 2 \sum_{\rho \in \frZ_{n}} [M_{\rho}:\QQ] r_{\varphi\rho} = \sum_{\rho \in \frZ_{n}} [M_{\rho}:\QQ] (1-W(\varphi \rho))
    \]
for $n > N_{\JR}$.
We discuss the following three cases separately.

    \textbf{Case 1: $p$ splits in $K$}. By Theorem \ref{thm:rootnumber}(i), $W(\varphi \rho) = W(\varphi)$, which is independent of the choice of $\rho$. Thus, for $n > N_{\JR}$, 
    \[
    \rank A_n(K)= \sum_{\rho\in \frZ_n} [M_\rho:\mathbb{Q}] (1-W(\varphi)) = (1-W(\varphi)) [M:\mathbb{Q}] \phi(p^n).
    \]
    Therefore,
    \[
    \rank_{\ZZ} A_\varphi(K_n^{\ac}) = 
    \begin{cases}
    \rank_{\ZZ} A_\varphi(K_{n-1}^{\ac}), &\text{if } W(\varphi) = 1, \\
    2 [M:\QQ] \phi(p^n) + \rank_{\ZZ} A_\varphi(K_{n-1}^{\ac}), &\text{if } W(\varphi) = -1.
    \end{cases}
    \]

    \textbf{Case 2: $p$ remains inert in $K$}.  
    By Theorem \ref{thm:rootnumber}(ii), $W(\varphi \rho) = (-1)^{n-j+1-f(\varphi_p)} W(\varphi)$, which is also independent of the choice of $\rho \in \frZ_{n}$ for $n > N_{\inert}$ in Lemma \ref{lemma:pinert}. Therefore, as argued in \textbf{Case 1}, for $n > \max \{ N_{\JR}, N_{\inert, \varphi} \}$, we have the following:
    \begin{enumerate}[(a)]
        \item When $j + f(\varphi_p)$ is even,
    \[
    \rank_{\ZZ} A_\varphi(K_n^{\ac}) = 
    \begin{cases}
    2 [M:\QQ] \phi(p^n) + \rank_{\ZZ} A_\varphi(K_{n-1}^{\ac}), &\text{if } n \equiv \tilW(\varphi) \bmod 2, \\
    \rank_{\ZZ} A_\varphi(K_{n-1}^{\ac}), &\text{if } n \not\equiv \tilW(\varphi) \bmod 2.
    \end{cases}
    \]
        \item When $j + f(\varphi_p)$ is odd,
        \[
    \rank_{\ZZ} A_\varphi(K_n^{\ac}) = 
    \begin{cases}
    2 [M:\QQ] \phi(p^n) + \rank_{\ZZ} A_\varphi(K_{n-1}^{\ac}), & n \not\equiv \tilW(\varphi) \bmod 2, \\
    \rank_{\ZZ} A_\varphi(K_{n-1}^{\ac}), & n \equiv \tilW(\varphi) \bmod 2.
    \end{cases}
    \]
    \end{enumerate}

    \textbf{Case 3: $p$ ramifies in $K$}.  
    We consider 
    \begin{align*}
    \sum_{\rho \in \frZ_{n}} (1-W(\varphi\rho))[M_\rho:\mathbb{Q}] &= \sum_{\rho \in \frZ_{n}} [M_\rho:\mathbb{Q}] - \sum_{\rho \in \frZ_{n}} W(\varphi\rho)[M_\rho:\mathbb{Q}] \\
    &= [M:\QQ] \phi(p^n) - \dfrac{1}{2} \sum_{\sigma: \Phi \hookrightarrow \CC} \sum_{\rho \in \frY_{n}^{\dagger}} W(\varphi^\sigma \rho).
    \end{align*}
Here the second equality follows from equations \eqref{eq:sum1} and \eqref{eq:sum2} in Proposition \ref{lemma:isogeny}. Furthermore, for every fixed character $\varphi^{\sigma}$, we use Theorem \ref{thm:rootnumber}(iv)(v) to see that for all $\sigma: \Phi \hookrightarrow \CC$,
    \[
    \sum_{\rho \in \frY_{n}^{\dagger}} W(\varphi^\sigma \rho)=0,
    \]
for $n > \max\{N_{\mathrm{ram}, \varphi^{\sigma}}: \sigma: \Phi \hookrightarrow \CC\}$ defined in Lemma \ref{lemma:pramified}, since half of the invertible residue classes modulo $p$ are quadratic residues, and the other half are non-residues, and hence half of the root numbers $W(\varphi^{\sigma} \rho)$ are $1$ and the other half of them are $-1$. Therefore,
    \[
    \rank_{\ZZ} A_\varphi(K_n^{\ac}) = [M:\QQ] \phi(p^n) + \rank_{\ZZ} A_\varphi(K_{n-1}^{\ac})
    \]
for $n > \max\{ N_{\JR}, N_{\mathrm{ram}, \varphi^{\sigma}}: \sigma: \Phi \hookrightarrow \CC  \}$. This completes the proof of Theorem \ref{thm:mainA}. 

In particular, in all three cases, we observe the congruence relation
    \[
    \rank_{\ZZ} A_{\varphi}(K_n^{\ac}) \equiv \rank_{\ZZ} A_{\varphi}(K_{n-1}^{\ac}) \mod{\phi(p^n)},
    \]
as stated in the introduction.
\end{proof}

\begin{remark} \label{rem:matches} 
Theorem \ref{thm:mainA} aligns with the results for elliptic curves recalled in Section \ref{sec:previous}. Specifically, in the case of elliptic curves $E$, the Hecke field $M$ is $\QQ$, implying $[M:\QQ] = 1$. By the theory of complex multiplication, $K$ must have class number one, so $j = 0$. Furthermore, when $E$ has good reduction at $p$, the associated Hecke character $\varphi$ of $E$ is unramified at $p$. Therefore $j+f(\varphi_p) = 0$, an even number. These conditions ensure that our results are consistent with the previously known results in this case (for example, \cite[Theorem 1.7, Theorem 1.8]{MR1860044}).
\end{remark}

\section{On the finiteness of the Mordell-Weil group} \label{sec:finitetor}
So far in this article, we have focused on the growth pattern of the rank of $A$ along the anticyclotomic tower $K_{n}^{\ac}/K$, ignoring the torsion points. In this section, we provide a proof that the group of torsion points $A(K_{\infty}^{\ac})_{\tors}$ is finite for \emph{any} abelian variety $A/\QQ$. This was observed in \cite[Exercise 1.10]{MR1860044} in the case of elliptic curves. Further motivation for this result can be found in Section \ref{sec:introfurther}.

\begin{lemma} \label{lem:inert}
Any prime of $K$ that is inert in $K/\QQ$ and does not lie above $p$ must split completely in $K_{\infty}^{\ac}/K$.
\end{lemma}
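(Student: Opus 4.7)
The plan is to exploit the anticyclotomic action of complex conjugation by expressing the Frobenius at $v$ as the square of a lift of $\sfc$ and then simplifying using that this lift has order $2$.

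First I will argue that $K_\infty^{\ac}/\QQ$ is Galois: since $K_\infty^{\ac}$ is characterized intrinsically as the unique $\ZZ_p$-extension of $K$ on which $\Gal(K/\QQ)$ acts by $-1$, it is preserved by every automorphism of $\bar\QQ$ fixing $\QQ$. Set $G := \Gal(K_\infty^{\ac}/\QQ)$ and $H := \Gal(K_\infty^{\ac}/K)$. Fix an embedding $K_\infty^{\ac} \hookrightarrow \CC$ and let $c \in G$ denote the restriction of complex conjugation; since $K_\infty^{\ac}$ is totally imaginary, $c$ has order exactly $2$ and lifts $\sfc$. The anticyclotomic condition then reads $c h c^{-1} = h^{-1}$, equivalently $ch = h^{-1}c$, for every $h \in H$.

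Next I let $u$ be the rational prime below $v$. Since $v$ is inert and $u \neq p$, the prime $u$ is unramified in $K/\QQ$ and $v$ is unramified in $K_\infty^{\ac}/K$, so $u$ is unramified throughout $K_\infty^{\ac}/\QQ$. Fix a prime $w$ of $K_\infty^{\ac}$ above $v$; this produces a Frobenius element $\mathrm{Frob}_u \in G$, which projects to $\sfc$ on $K$ because $v$ is inert, and which satisfies $\mathrm{Frob}_u^2 = \mathrm{Frob}_v$ in $H$. Writing $\mathrm{Frob}_u = c h$ for some $h \in H$, I conclude via the computation
\[
\mathrm{Frob}_v \;=\; (ch)^2 \;=\; (h^{-1}c)(ch) \;=\; h^{-1} c^{2} h \;=\; 1,
\]
where the second equality uses $ch = h^{-1}c$ and the last uses $c^{2} = 1$. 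Since the decomposition subgroup of $v$ in $H$ is (topologically) generated by $\mathrm{Frob}_v$, it is trivial, which is precisely the assertion that $v$ splits completely in $K_\infty^{\ac}/K$.

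The proof reduces to a one-line group-theoretic computation once the setup is in place; the only mildly subtle point is the existence of a lift of $\sfc$ to $G$ of order exactly $2$, but this is automatic from the existence of a genuine complex conjugation on $K_\infty^{\ac}$ (which in turn uses that $K_\infty^{\ac}/\QQ$ is Galois and totally imaginary). Hence no substantive obstacle arises.
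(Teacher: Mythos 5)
Your proof is correct and uses essentially the same idea as the paper: the Galois group $\Gal(K_\infty^{\ac}/\QQ)$ (resp.\ $\Gal(K_n^{\ac}/\QQ)$ in the paper) is pro-dihedral, so any Frobenius lying outside the anticyclotomic subgroup squares to the identity, whence the decomposition group of $v$ over $K$ is trivial. The only cosmetic differences are that you work at the infinite level with an explicit order-two lift of complex conjugation and the identity $\mathrm{Frob}_v = \mathrm{Frob}_u^2$, whereas the paper argues at each finite layer by observing $Z_n \cap H_n = 1$ inside the dihedral group of order $2p^n$.
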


\begin{proof}
The following proof is adapted from \cite[Section 2]{MR357371}. Let $q$ be a rational prime coprime to $p$ that is inert in $K$, and let $\mathfrak{q}$ be the unique prime of $K$ lying above $q$. Then $\mathfrak{q}$ is unramified in $K_{n}^{\mathrm{ac}}$. Let $\mathfrak{q}_n$ be a prime of $K_{n}^{\mathrm{ac}}$ lying above $\mathfrak{q}$, and let $Z_n$ be the decomposition group of $\mathfrak{q}_n$ in $K_{n}^{\mathrm{ac}}/\mathbb{Q}$.

Since $q$ is unramified in $K_{n}^{\mathrm{ac}}$ and inert in $K$, the decomposition group $Z_n$ is cyclic and its order is divisible by $2$. On the other hand, $G_n := \mathrm{Gal}(K_{n}^{\mathrm{ac}}/\mathbb{Q})$ is a dihedral group of order $2p^n$. Hence any cyclic subgroup of $G_n$ whose order is divisible by $2$ must have order exactly $2$. Therefore $|Z_n|=2$ and $Z_n \cap H_n = 1$, where $H_n := \mathrm{Gal}(K_{n}^{\mathrm{ac}}/K)$. Hence $\mathfrak{q}$ splits completely in $K_{n}^{\mathrm{ac}}$.
\end{proof}

\begin{proposition}
Let $A/\QQ$ be an abelian variety over $\QQ$, then $A(K_{\infty}^{\ac})_{\tors}$ is finite.
\end{proposition}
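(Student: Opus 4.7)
The plan is to combine Lemma \ref{lem:inert} with the classical injectivity of the reduction map on prime-to-residue-characteristic torsion. The key geometric observation is that if a prime of $K$ splits completely in $K_\infty^{\ac}$, then the completion of $K_\infty^{\ac}$ at a prime above it remains a finite extension of $\QQ_\ell$; this forces torsion in $A(K_\infty^{\ac})$ to embed into the finite set of points of $A$ over a finite residue field. A single auxiliary prime does not suffice, because the reduction map can fail to be injective on torsion of the same residue characteristic, so I plan to use \emph{two} such primes.

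First I would invoke the Chebotarev density theorem (or just quadratic reciprocity) to produce two distinct rational primes $q_1$ and $q_2$, both inert in $K/\QQ$, both coprime to $p$, and both primes of good reduction for $A$; both conditions exclude only finitely many primes. Writing $\mathfrak{q}_i$ for the unique prime of $K$ above $q_i$, Lemma \ref{lem:inert} implies that $\mathfrak{q}_i$ splits completely in $K_\infty^{\ac}/K$. Fixing a prime $\mathfrak{Q}_i$ of $K_\infty^{\ac}$ above $\mathfrak{q}_i$, the completion $(K_\infty^{\ac})_{\mathfrak{Q}_i}$ is therefore equal to $K_{\mathfrak{q}_i}$, a finite extension of $\QQ_{q_i}$ with residue field of cardinality $q_i^2$. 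This yields an embedding $A(K_\infty^{\ac}) \hookrightarrow A(K_{\mathfrak{q}_i})$.

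Next, since $A$ has good reduction at $q_i$, the classical fact that the kernel of the reduction map is a pro-$q_i$ group gives an injection $A(K_{\mathfrak{q}_i})[\ell^\infty] \hookrightarrow A(\mathbb{F}_{q_i^2})$ for every prime $\ell \neq q_i$. Combining with the previous embedding yields $A(K_\infty^{\ac})[\ell^\infty] \hookrightarrow A(\mathbb{F}_{q_i^2})$ whenever $\ell \neq q_i$. Since $q_1 \neq q_2$, for every prime $\ell$ at least one index $i \in \{1, 2\}$ satisfies $\ell \neq q_i$, so each $A(K_\infty^{\ac})[\ell^\infty]$ is finite of size bounded by $\max_i \lvert A(\mathbb{F}_{q_i^2}) \rvert$. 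Moreover, because $A(\mathbb{F}_{q_1^2})$ is finite, $A(K_\infty^{\ac})[\ell^\infty]$ vanishes for all but finitely many primes $\ell \neq q_1$ (and using $q_2$ handles $\ell = q_1$). Thus $A(K_\infty^{\ac})_{\tors} = \bigoplus_\ell A(K_\infty^{\ac})[\ell^\infty]$ is a finite direct sum of finite groups, hence finite. No step is particularly deep; the only real subtlety is the two-prime trick, which is how one bypasses the failure of injectivity of the reduction map on the $q_i$-primary part.
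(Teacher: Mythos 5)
Your proof is correct and takes essentially the same route as the paper's: find inert primes $q \neq p$ of good reduction that split completely in $K_\infty^{\ac}/K$ via Lemma \ref{lem:inert}, use injectivity of the reduction map on prime-to-residue-characteristic torsion, and use two such primes to cover every $\ell$. The only cosmetic difference is that you pass directly to the completion $(K_\infty^{\ac})_{\mathfrak{Q}_i} = K_{\mathfrak{q}_i}$, whereas the paper works at finite layers $K_{n}^{\ac}$ and observes the residue field stabilizes at $\FF_{q^2}$; these are the same observation phrased differently.
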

\begin{proof}
We consider the set $S$ of rational primes $q$ such that
\begin{enumerate}[(i)]
    \item The abelian variety $A/\QQ$ has good reduction at $q$,
    \item $q \neq p$,
    \item $q$ remains inert in $K$. Thus, by Lemma \ref{lem:inert}, $q$ splits completely in $K_{\infty}^{\ac}/K$.
\end{enumerate}
For any fixed place $q_{\infty}$ of $K_{\infty}^{\ac}$, its restriction $q_n$ on $K_{n}^{\ac}$ has residue field isomorphic to $\FF_{q^2}$. By the density theorem, we see that the set $S$ has density $1/2$, and hence $S$ is infinite.

Let $q \in S$. We first prove that the number of torsion points in $A(K_{\infty}^{\ac})$ of order prime to $q$ is finite. Let $P \in A(K_{\infty}^{\ac})[m]$ for some positive integer $m$ coprime to $q$. Then $P \in A(K_{n_P}^{\ac})[m]$ for a certain layer $n_P$. By \cite[page 495]{MR236190}, there is an injection
\[
A(K_{n_P}^{\ac})[m] \hookrightarrow \tilA(k_{q_{n_P}}) = \tilA(\FF_{q^2}),
\]
where $\tilA$ denotes the reduction of $A$ modulo $q$ \footnote{This is valid since $A$ has good reduction at $q$.}. We observe that the size of $\tilA(\FF_{q^2})$ is fixed and independent of $n_P$. Hence, $\cup_{q \nmid m} A(K_{\infty}^{\ac})[m]$ is finite. Indeed, suppose otherwise: for any positive integer $N$, there exist distinct points $P_1, \ldots, P_N \in A(K_{\infty}^{\ac})$ of order $m_1, \ldots, m_N$ prime to $q$. Then $P_i \in A(K_{\infty}^{\ac})[m]$ for $m = m_1 \cdots m_N$. Thus,
\[
\{P_1, \ldots, P_N\} \hookrightarrow \tilA(\FF_{q^2}).
\]
But this is impossible if $N > \# \tilA(\FF_{q^2})$.

To show that $A(K_{\infty}^{\ac})[q^{\infty}]$ is finite, since there are infinitely many primes in $S$, we can choose another prime $q^{\prime}$ and apply the previous argument. This shows that the number of torsion points in $A(K_{\infty}^{\ac})$ of order prime to $q^{\prime}$ is finite. In particular, $A(K_{\infty}^{\ac})[q^{\infty}]$ is finite.
\end{proof}

\begin{remark}
There do exist certain $\ZZ_p$-extensions $F_{\infty}/F$ such that $A(F_{\infty})_{\tors}$ is infinite for some abelian variety $A$. See \cite[Exercise 1.15]{MR1860044} for an example.
\end{remark}

Combining this with Theorem \ref{thm:mainA}, we obtain the following corollary.

\begin{corollary} \label{coro:finitelygenerated}
Let $A_{\varphi}$ be the abelian variety defined above. Then $A_{\varphi}(K_{\infty}^{\ac})$ is a finitely generated abelian group if and only if $p$ splits in $K$ and $W(\varphi) = 1$.
\end{corollary}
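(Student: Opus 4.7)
The plan is to combine the finiteness of the torsion subgroup $T := A_{\varphi}(K_{\infty}^{\ac})_{\tors}$, proved in the preceding proposition, with the rank growth dichotomy of Theorem~\ref{thm:mainA}.

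First, I would rewrite $A_{\varphi}(K_{\infty}^{\ac}) = \bigcup_{n \geq 0} A_{\varphi}(K_{n}^{\ac})$ as an ascending union of finitely generated Mordell--Weil groups. Any finite generating set lies at some finite layer, so the union is finitely generated if and only if the chain eventually stabilizes. Because $T$ is finite, the torsion parts of $A_{\varphi}(K_n^{\ac})$ automatically stabilize for $n$ large, and consequently stabilization of the whole chain is controlled by the rank sequence $r_n := \rank_{\ZZ} A_{\varphi}(K_n^{\ac})$; explicitly, $A_{\varphi}(K_{\infty}^{\ac})$ is finitely generated if and only if $\{r_n\}$ is bounded.

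Then I would appeal to Theorem~\ref{thm:mainA} to identify exactly when this boundedness holds. Writing $r_n - r_{n-1} = \epsilon_n \phi(p^n)$ with $\epsilon_n \in \ZZ_{\geq 0}$ and noting that $\phi(p^n) \to \infty$, the sequence $\{r_n\}$ is bounded precisely when $\epsilon_n = 0$ for all sufficiently large $n$. Reading off the three cases of Theorem~\ref{thm:mainA}: in case (i), $\epsilon_n = 0$ eventually exactly when $W(\varphi) = 1$, whereas $W(\varphi) = -1$ gives $\epsilon_n = 2d$ for all $n \gg 0$; in case (ii), one parity class of $n$ produces $\epsilon_n = 2d$ for $n \gg 0$ regardless of the parity of $j + f(\varphi_p)$ and of $\tilW(\varphi)$, so $\{r_n\}$ is unbounded; in case (iii), $\epsilon_n = d > 0$ for every $n \gg 0$, so again $\{r_n\}$ is unbounded. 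Thus $\{r_n\}$ is bounded exactly when $p$ splits in $K$ and $W(\varphi) = 1$, which is the corollary.

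The hard part is essentially nothing once the two key inputs are in place---Theorem~\ref{thm:mainA} and the torsion-finiteness proposition do all the work, and the rest is bookkeeping. The only delicate point is the reduction in the first paragraph, asserting that bounded rank together with finite torsion suffices for finite generation; strictly one must also argue that the ascending tower of full-rank lattices $A_{\varphi}(K_n^{\ac})/T$ inside the fixed finite-dimensional $\QQ$-vector space $A_{\varphi}(K_{\infty}^{\ac}) \otimes_{\ZZ} \QQ$ terminates, but this is standard in the Mordell--Weil setting and is already implicit in the analogous elliptic-curve discussion recalled in Section~\ref{sec:previous}.
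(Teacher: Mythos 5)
Your proposal is correct and takes the same approach the paper implicitly intends: the paper itself gives no proof, simply stating that the corollary follows by ``combining'' the finite-torsion proposition with Theorem~\ref{thm:mainA}, which is exactly your bookkeeping argument. The ``delicate point'' you flag is genuine but is indeed standard: once $\rank_{\ZZ} A_{\varphi}(K_n^{\ac})$ stabilizes at level $n_0$, Galois descent plus the finiteness of $T := A_{\varphi}(K_{\infty}^{\ac})_{\tors}$ shows that for $n \geq n_0$ and $Q \in A_{\varphi}(K_n^{\ac})$, each $(\sigma - 1)Q$ with $\sigma \in \Gal(K_n^{\ac}/K_{n_0}^{\ac})$ is torsion, hence annihilated by $|T|$, so $|T|\cdot Q \in A_{\varphi}(K_{n_0}^{\ac})$; this bounds the indices $[A_{\varphi}(K_n^{\ac}) : A_{\varphi}(K_{n_0}^{\ac})]$ uniformly and forces the chain to stabilize.
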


\section{Distribution of vanishing orders among anticyclotomic twists} \label{sec:distribution}

In this section, we study the distribution of the parity of vanishing orders of \( L(\varphi \rho, s) \) at \( s=1 \) as \( \rho \) varies among anticyclotomic Galois characters \( \rho: \Gal(K_{\infty}^{\ac}/K) \rightarrow \CC^{\times} \). An informal introduction to this topic was provided in Section \ref{sec:introdistribution}. The problem of determining this distribution pattern reduces to our computation of root numbers in Theorem \ref{thm:rootnumber}, thanks to Theorem \ref{thm:RJ}, in a manner analogous to the proof of Theorem \ref{thm:mainA}. 

To formalize this discussion, we define subsets of \( \frY_{n} \) as follows:
\begin{equation} \label{eq:defYnpm}
\frY_{n}^{\pm} := \{\rho \in \frY_{n} : \ord_{s=1} L(\varphi \rho, s) \equiv (1-\pm 1)/2 \pmod{2} \}.
\end{equation}
The ``frequency” of even and odd vanishing orders is represented by the sequences
\[
\left\{\bfP^{\pm}_{N} := \frac{\# \frY_{N}^{\pm}}{\# \frY_{N}} \right\}_{N \geq 0}.
\]

Using Theorem \ref{thm:rootnumber} and Theorem \ref{thm:RJ}, we establish the following results.

\begin{theorem}
The behavior of \( \bfP_{N}^{\pm} \) as \( N \to \infty \) can be described as follows.
\begin{enumerate}[\rm (1)]
    \item When \( p \) splits in \( K \), 
    \[
    \bfP_{N}^{+} \to \frac{W(\varphi)+1}{2}, \quad \bfP_{N}^{-} \to \frac{-(W(\varphi)-1)}{2}.
    \]
    \item When \( p \) remains inert in \( K \), we show the limit of each $\bfP_{?}^{?}$ as $k \rightarrow \infty$ in the following table.
    
\begin{table}[H]
\centering
\begin{tabular}{|cc|c|c|c|c|}
\hline
\multicolumn{2}{|c|}{}                                                               & $\bfP_{2k+1}^{+}$ & $\bfP_{2k}^{+}$ & $\bfP_{2k+1}^{-}$ & $\bfP_{2k}^{-}$ \\ \hline
\multicolumn{1}{|c|}{\multirow{2}{*}{$j+f(\varphi_p)$ is even}}  & $W(\varphi) = 1$  & $\dfrac{1}{p+1}$   & $\dfrac{p}{p+1}$ & $\dfrac{p}{p+1}$   & $\dfrac{1}{p+1}$ \\ \cline{2-6} 
\multicolumn{1}{|c|}{}                                           & $W(\varphi) = -1$ & $\dfrac{p}{p+1}$   & $\dfrac{1}{p+1}$ & $\dfrac{1}{p+1}$   & $\dfrac{p}{p+1}$ \\ \hline
\multicolumn{1}{|c|}{\multirow{2}{*}{$j+f(\varphi_p)$ is odd}} & $W(\varphi) = 1$  & $\dfrac{p}{p+1}$   & $\dfrac{1}{p+1}$ & $\dfrac{1}{p+1}$   & $\dfrac{p}{p+1}$ \\ \cline{2-6} 
\multicolumn{1}{|c|}{}                                           & $W(\varphi) = -1$ & $\dfrac{1}{p+1}$   & $\dfrac{p}{p+1}$ & $\dfrac{p}{p+1}$   & $\dfrac{1}{p+1}$ \\ \hline
\end{tabular}
\end{table}
In particular,
\[
\liminf_{N \rightarrow \infty} \bfP_{N}^{\pm} = \dfrac{1}{p+1}, \quad \limsup_{N \rightarrow \infty} \bfP_{N}^{\pm} = \dfrac{p}{p+1},
\]
and both \( \{\bfP_{N}^{+}\} \) and \( \{\bfP_{N}^{-}\} \) diverge as \( N \to \infty \).
    \item When \( p \) ramifies in \( K \),
    \[
    \bfP_{N}^{+}, \, \bfP_{N}^{-} \to \frac{1}{2}.
    \]
\end{enumerate}
\end{theorem}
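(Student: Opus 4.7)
The plan is to reduce the distribution question to a counting problem for root numbers, which is then handled by the explicit formulas of Theorem \ref{thm:rootnumber}. By Theorem \ref{thm:RJ}, there exists $N_{0} = N_{0}(\varphi)$ such that for every $\rho \in \frY_{n}^{\dagger}$ with $n > N_{0}$, one has $\ord_{s=1} L(\varphi\rho,s) \in \{0,1\}$ and $(1-W(\varphi\rho))/2$ equals this order. Hence, up to a uniformly bounded error coming from the finitely many exceptional levels $n \leq N_{0}$,
\[
\#\frY_{N}^{\pm} = \#\{\rho \in \frY_{N} : W(\varphi\rho) = \pm 1\} + O(1), \qquad \#\frY_{N} = p^{N}.
\]
Everything then reduces to estimating the right-hand cardinality by means of Theorem \ref{thm:rootnumber}, applied level-by-level via the partition $\frY_{N} = \bigsqcup_{n=0}^{N} \frY_{n}^{\dagger}$.

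In the split case the argument is immediate: Theorem \ref{thm:rootnumber}(i) gives $W(\varphi\rho) = W(\varphi)$ for \emph{every} $\rho$, so one of $\frY_{N}^{+}$ or $\frY_{N}^{-}$ coincides with $\frY_{N}$ (up to the $O(1)$ exceptional set) according to the sign of $W(\varphi)$. In the inert case, Theorem \ref{thm:rootnumber}(ii) shows that once $n > j + f(\varphi_{p}) - 1$ the sign $W(\varphi\rho)$ depends \emph{only} on the parity of $n$, with the parity pattern dictated by $j+f(\varphi_{p})$ and $W(\varphi)$. Thus for $N$ large I would group the levels by parity and compute, using $\#\frY_{n}^{\dagger} = \phi(p^{n})$, geometric sums such as
\[
\sum_{\substack{n \text{ odd} \\ 1 \leq n \leq 2k+1}} \phi(p^{n}) = (p-1) \sum_{\ell=0}^{k} p^{2\ell} = \frac{p^{2k+2}-1}{p+1},
\]
and dividing by $p^{2k+1}$ yields the asymptotic $p/(p+1)$; the complementary odd/even combinations produce the value $1/(p+1)$. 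Reading off which combination lands in $\frY_{N}^{+}$ versus $\frY_{N}^{-}$ from Theorem \ref{thm:rootnumber}(ii) then fills in each row of the table.

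The ramified case is the genuine obstacle, because the sign $W(\varphi\rho)$ in Theorem \ref{thm:rootnumber}(iv)(v) genuinely depends on $\rho$ through the Legendre symbol $\leg{l_1}{p}$, where $l_1 = l_{\varphi_v\rho_v}$ is determined by the value $(\varphi_v\rho_v)(1 + \varpi_{K_v}^{f_1-1})$. The key step is to show that as $\rho$ ranges over $\frY_{n}^{\dagger}$ for $n$ large enough that $f(\rho_v) = 2(n-j) > f(\varphi_v)$, the residue $l_1 \in (\ZZ/p)^{\times}$ is \emph{equidistributed}. For such $\rho$ one has $\varphi_v(1+\varpi^{f_1-1}) = 1$, so $l_1$ is read off from $\rho_v$ alone; moreover, by Proposition \ref{prop:localrho} the character $\rho_v$ is a primitive character of the cyclic group $D_n \cong C_{p^{n-j}}$, and $1 + \varpi^{2(n-j)-1}$ generates the unique subgroup of order $p$ inside $D_n$. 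Consequently the map $\rho_v \mapsto l_1 \bmod p$ from primitive characters of $D_n$ to $(\ZZ/p)^{\times}$ is surjective with fibers of equal cardinality $p^{n-j-1}$, so exactly half the $\rho \in \frY_{n}^{\dagger}$ satisfy $\leg{l_1}{p} = +1$ and half satisfy $\leg{l_1}{p} = -1$. Inserting this into Theorem \ref{thm:rootnumber}(iv)(v) shows that for each such level $\#\frY_{n}^{\dagger}$ splits evenly between the two signs; summing over all $n \leq N$ (and absorbing the bounded number of levels where $f(\rho_v) \leq f(\varphi_v)$) yields $\bfP_{N}^{\pm} \to 1/2$.

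Throughout, the tail contribution from the exceptional set of Theorem \ref{thm:RJ} and from the finitely many small levels where Theorem \ref{thm:rootnumber} gives a constant sign is $O(1)$, and is swamped by $\#\frY_{N} = p^{N}$ in the denominator; it therefore does not affect any of the stated limits. The main conceptual input is the equidistribution statement in the ramified case, which is really a statement about the structure of the local group $Q_{2(n-j)}^{-}$ worked out in Proposition \ref{prop:localrho}.
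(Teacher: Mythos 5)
The proposal is correct and follows essentially the same approach as the paper: reduce to counting root numbers via Theorem~\ref{thm:RJ}, observe that the split case is immediate, handle the inert case by grouping levels according to parity and computing the geometric sums $\sum_{n \text{ odd/even}} \phi(p^n)/p^N$, and handle the ramified case via equidistribution of Legendre symbols. One genuine refinement in your write-up is the explicit verification that $l_1$ equidistributes in $(\ZZ/p)^{\times}$: the paper's proof merely asserts that ``half of the invertible residue classes modulo $p$ are quadratic residues,'' which only suffices once one knows the map $\rho_v \mapsto l_1 \bmod p$ from primitive characters of $D_n \cong C_{p^{n-j}}$ to $(\ZZ/p)^{\times}$ is balanced; your observation that the fibers all have cardinality $p^{n-j-1}$ supplies exactly this. (To pass from characters of $D_n$ to $\rho \in \frY_n^{\dagger}$ one should also note that restriction is $p^j$-to-one, so equidistribution over $\widehat{D_n}^{\mathrm{prim}}$ pulls back to $\frY_n^{\dagger}$; this is implicit in your argument and easy, but worth a sentence.) The remaining content — the split case being constant and the inert case producing the $p/(p+1)$ versus $1/(p+1)$ dichotomy with parities read off from Theorem~\ref{thm:rootnumber}(ii) — matches the paper.
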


\begin{proof}
By Theorem \ref{thm:RJ}, the order of $L(\varphi \rho, s)$ is either zero or one, which is completely determined by the root number of $W(\varphi \rho)$, when the level $n$ of $\rho$ is sufficiently large. Moreover, in all the cases of decomposition type of $p$ in $K$, the pattern of the quotient $R(\varphi, \rho)$ stabilizes for sufficiently large $n$. Combining these two observations, the result of the theorem follows from Theorem \ref{thm:rootnumber} by a straightforward computation. We only make two remarks on this.
\begin{itemize}
    \item When $p$ remains inert in $K$, the behavior of sequences $\bfP_{N}^{\pm}$ reduces to the following computation:
    \[
    \left(\sum_{0 \leq n \leq N, \, n \equiv 0 \bmod 2} \phi(p^n)\right)/{p^N} = \begin{cases}
    \dfrac{p+p^{-N}}{p+1}, &\quad \text{if } N \text{ is even}, \\
    \dfrac{1+p^{-N}}{p+1}, &\quad \text{if } N \text{ is odd}, \\
    \end{cases}
    \]
    and
    \[
    \left(\sum_{0 \leq n \leq N, \, n \equiv 1 \bmod 2} \phi(p^n)\right)/{p^N} = \begin{cases}
    \dfrac{1-p^{-N}}{p+1}, &\quad \text{if } N \text{ is even}, \\
    \dfrac{p-p^{-N}}{p+1}, &\quad \text{if } N \text{ is odd}. \\
    \end{cases}
    \]
    \item When $p$ ramifies in $K$, we use the observation that half of the invertible residue classes modulo $p$ are quadratic residues, and the other half are non-residues. This is the same observation used in the proof of Theorem \ref{thm:mainA}.
\end{itemize}
The case when $p$ splits in $K$ is straightforward.
\end{proof}

\small 
\printbibliography

\end{document}